\documentclass[pdflatex,sn-mathphys-num]{sn-jnl}

\usepackage{bm}
\usepackage{amsmath,amssymb,amsfonts}
\usepackage{mathrsfs}
\usepackage{xcolor}
\usepackage{textcomp}
\usepackage{manyfoot}
\usepackage{multirow}
\usepackage{latexsym}
\usepackage{here}
\usepackage{showidx}
\usepackage{epstopdf}

\usepackage{algorithm}
\usepackage{algorithmicx}
\usepackage{algpseudocode}

\usepackage{listings}

\usepackage{tikz}
\usepackage{pgfplots}
\pgfplotsset{compat=1.18}
\usetikzlibrary{patterns}

\usepackage{todonotes}

\theoremstyle{thmstyleone}%
\newtheorem{theorem}{Theorem}
\newtheorem{proposition}[theorem]{Proposition}%

\theoremstyle{thmstyletwo}%
\newtheorem{lemma}[theorem]{\bf Lemma}
\newtheorem{corollary}[theorem]{\bf Corollary}

\theoremstyle{thmstylethree}%

\begin{document}

\title[Further results on k-Roman domination on cylindrical grids]{Further results on \([k]\)-Roman domination on cylindrical grids \(C_m \Box P_n\)}


\author*[1,2]{\fnm{Simon} \sur{Brezovnik}}\email{simon.brezovnik@fs.uni-lj.si}

\author[1,3]{\fnm{Janez} \sur{Žerovnik}}\email{janez.zerovnik@fs.uni-lj.si}
\equalcont{These authors contributed equally to this work.}

\affil*[1]{\orgdiv{Faculty of Mechanical Engineering}, \orgname{University of Ljubljana}, \orgaddress{\city{Ljubljana}, \country{Slovenia}}}

\affil[2]{\orgname{Institute of Mathematics, Physics and Mechanics}, \orgaddress{\city{Ljubljana}, \country{Slovenia}}}

\affil[3]{\orgname{Rudolfovo – Science and Technology Centre}, \orgaddress{\city{Novo Mesto}, \country{Slovenia}}}


\abstract{
{In this paper, we study the $[k]$-Roman domination number of cylindrical graphs $C_m \Box P_n$. 
Our analysis begins with a general lower bound based on local neighborhood constraints, showing that
$\gamma_{[k]R}(C_m\Box P_n) > (k+1)\left\lceil\frac{mn}{5}\right\rceil.$
By exploiting the connection between $[k]$-Roman domination and efficient domination, we characterize those cylindrical graphs whose optimal $[k]$-Roman domination number is realized by configurations with minimum possible local neighborhood weight.
For fixed small values $m\in\{5,\ldots,8\}$, we construct explicit periodic $[k]$-Roman dominating functions that yield constructive upper bounds. 
These constructions are further refined using ceiling-type adjustments and reductions based on packing sets. 
A systematic comparison of the resulting bounds shows how their relative strength depends on the parameter $k$ and on the length of the path. }

{Version of \today}}

\keywords{$[k]$-Roman domination, Cartesian product, cylindrical grids, packing number, efficient domination}


\pacs[MSC Classification]{05C69, 05C76}

\maketitle

\section{Introduction}

\smallskip
\noindent
Roman-type domination parameters model defensive resource allocation in networks by allowing vertices to carry different levels of protection. 
The classical \emph{Roman domination} concept, introduced by Cockayne et al.~\cite{Cockayne1985}, assigns to each vertex a label from $\{0,1,2\}$ so that every vertex labeled $0$ has a neighbor labeled $2$. 
The minimum total weight of such a labeling defines the Roman domination number $\gamma_R(G)$. 
This model reflects the idea that strongly defended vertices can protect adjacent undefended regions, thereby reducing the total amount of resources needed.

\smallskip
\noindent
Since its introduction, Roman domination has become a central topic within domination theory and has been investigated for numerous graph classes; see the surveys~\cite{Chellali2021VarietiesI,Chellali2020VarietiesII,Chellali2024VarietiesIII,Chellali2025VarietiesIV,math11020351}. 
Stronger variants were later proposed to model more robust protection schemes. 
In particular, double Roman domination~\cite{Beeler2016} allows labels from $\{0,1,2,3\}$ with reinforced local conditions, while triple and higher-order versions were developed as natural extensions~\cite{ABDOLLAHZADEHAHANGAR2021125444,doi:10.1142/S1793830921501305}. 
These developments revealed a hierarchical structure among Roman-type parameters.

\smallskip
\noindent
A unified framework was introduced in~\cite{ABDOLLAHZADEHAHANGAR2021125444} under the name \emph{$[k]$-Roman domination}. Recent work has addressed $[k]$-Roman domination on various graph families. 
General bounds and exact values for paths, cycles, complete graphs, stars, and selected trees were obtained in~\cite{Khalili2023}, while structural and complexity aspects were studied in~\cite{Valenzuela2024}. 
Roman-type parameters in Cartesian product graphs, particularly grid and cylindrical graphs, have attracted attention as well; see, for instance,~\cite{AnuAparna2024} for double Roman domination on products of paths and cycles.

\smallskip
\noindent
Cartesian products of cycles and paths, that is graphs of the form $C_m \Box P_n$, are a fundamental class of cylindrical grids. 
Their regular structure makes them particularly suitable for constructive domination arguments, while still exhibiting rich combinatorial behavior. 
In the classical Roman setting, such graphs already display nontrivial patterns depending on the parity and size of the parameters.

\smallskip
\noindent
In this paper, we investigate the $[k]$-Roman domination number of cylindrical graphs $C_m \Box P_n$. 
We first derive a general lower bound based on local neighborhood constraints, showing that
\[
\gamma_{[k]R}(C_m\Box P_n) > (k+1)\left\lceil\frac{mn}{5}\right\rceil.
\]
Using the connection between $[k]$-Roman domination and efficient domination, we characterize those cylindrical graphs for which the optimal $[k]$-Roman domination number is attained by configurations with minimum possible local weight.

\smallskip
\noindent
We then construct explicit periodic $[k]$-Roman dominating functions that yield upper bounds for fixed small values $m\in\{5,\ldots,8\}$. 
For each case we refine the basic constructions by means of ceiling-type adjustments and packing-based reductions, and we systematically compare the resulting bounds. 
This analysis reveals how the relative strength of different constructions depends on $k$ and on the length of the path.

\section{Preliminaries}

\smallskip
\noindent
Let $G=(V,E)$ be a finite, simple graph and let $k\ge 1$. For a vertex $v\in V(G)$, we denote by $N(v)$ its (open) neighborhood and by 
$N[v]=N(v)\cup\{v\}$ its closed neighborhood.

\smallskip
\noindent
A function $f:V\to\{0,1,\ldots,k+1\}$ is a \emph{$[k]$-Roman dominating function}
($[k]$-RDF) if for every vertex $v$ with $f(v)<k$,
\[
f(N[v]) \ge k + |AN(v)|,
\quad\text{where}\quad
AN(v)=\{u\in N(v): f(u)>0\}.
\]

\noindent
Intuitively, the condition requires that vertices of small value must be supported by sufficiently large total weight in their closed neighborhood, with an additional dependence on the number of positively weighted neighbors.

\smallskip
\noindent
The \emph{weight} of $f$ is $w(f)=\sum_{v\in V} f(v)$, and the \emph{$[k]$-Roman
domination number} is
\[
\gamma_{[k]R}(G)=\min\{w(f): f\text{ is a $[k]$-RDF on }G\}.
\]

\noindent
A $[k]$-Roman dominating function $f$ on $G$ is called a \emph{$\gamma_{[k]R}$-function}
if it attains the minimum weight, that is, if $w(f)=\gamma_{[k]R}(G)$.

\smallskip
\noindent
The following proposition was proved in \cite[Proposition~7]{Khalili2023}
(see also \cite{Valenzuela2024}) and will be used in the paper.

\begin{proposition}[{\cite[Proposition~7]{Khalili2023}}]\label{prop1}
If $k \ge 2$, then in a $\gamma_{[k]R}(G)$-RDF, no vertex is assigned the value $1$.
\end{proposition}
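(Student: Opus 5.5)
The plan is to argue by contradiction. Let $f$ be a $\gamma_{[k]R}(G)$-function with $k\ge 2$, and suppose some vertex $v$ has $f(v)=1$. I will modify $f$ near $v$ and try to obtain a $[k]$-RDF of strictly smaller weight. The one thing the argument must track is how a change at $v$ affects the constraints at $v$ itself and at its neighbours.

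\emph{Step 1: a first reduction.} Since $f(v)=1<k$, the condition at $v$ holds, so $f(N(v))\ge k-1+|AN(v)|$. If $AN(v)=\emptyset$, then $f(N[v])=1<k$, a contradiction. Hence $v$ has a positive neighbour.

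\emph{Step 2: lower $v$ to $0$.} Define $g$ by $g(v)=0$ and $g=f$ elsewhere.
\begin{itemize}
\item For a neighbour $u$ of $v$ with $f(u)<k$, both sides of the condition at $u$ drop by exactly one: $f(N[u])$ loses the $1$ at $v$, and $|AN(u)|$ loses $v$. So the condition at $u$ still holds.
\item Vertices with value at least $k$ carry no condition.
\item Vertices at distance at least $2$ from $v$ are unaffected.
\end{itemize}
Hence $g$ is a $[k]$-RDF exactly when $f(N(v))\ge k+|AN(v)|$. In that case $w(g)=w(f)-1$, contradicting minimality. It therefore remains to treat the tight case
\[
f(N(v)) = k-1+|AN(v)|.
\]

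\emph{Step 3: the tight case, which is the main obstacle.} The natural move is to set $g(v)=0$ and raise one $u\in AN(v)$ by $1$; I would choose $u$ of maximum value, pushing it toward the threshold $k$. This restores the condition at $v$ and keeps all other conditions, by the computation of Step 2. However, it preserves the weight rather than decreasing it. To get strictness, I would try to show that the raised vertex $u$ now over-supplies its own neighbourhood. Either $u$ reaches value at least $k$, so its own constraint disappears, or some vertex in $N(u)\setminus\{v\}$ has slack and can be lowered by one.

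I expect this step to be delicate. Before writing it up I would test tiny cases. On $K_2$ with $k=2$, the labelling $(2,1)$ has weight $3=\gamma_{[2]R}(K_2)$ and uses the value $1$. This shows that the tight case cannot always be closed by a strict decrease.

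So I would first pin down exactly which reading of \cite[Proposition~7]{Khalili2023} holds. The transfer argument of Steps 2--3 always yields a $\gamma_{[k]R}$-function with strictly fewer vertices of value $1$, since each move removes one $1$ and creates none. Iterating it proves the statement in the form in which it is used later in the paper: optimal functions may be assumed to take values only in $\{0,2,\dots,k+1\}$.
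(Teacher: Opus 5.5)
The paper gives no proof of this proposition; it only cites Khalili et al. Your analysis is correct. Read literally, as saying that every $\gamma_{[k]R}$-function avoids the value $1$, the statement is false. Your example extends to every $k\ge 2$: on $K_2$ the labelling $(k,1)$ is a $[k]$-RDF of weight $k+1=\gamma_{[k]R}(K_2)$.

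The provable (and presumably intended) statement is existential: some $\gamma_{[k]R}$-function uses no value $1$. Your Steps 1--3 prove it.
\begin{itemize}
\item Step 2 shows that in a minimum function every vertex $v$ with $f(v)=1$ is tight, i.e.\ $f(N(v))=k-1+|AN(v)|$.
\item The transfer in Step 3 keeps $f$ a $[k]$-RDF. The new sum at $v$ is exactly $k+|AN(v)|$. The other neighbours of $v$ behave as in Step 2, and $u$ itself even gains slack. Raising an already positive label never breaks a condition.
\item The transfer preserves the weight and strictly decreases the number of $1$'s, since $v$ drops to $0$ and $u$ rises to at least $2$. So the iteration terminates.
\end{itemize}

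One check is missing. You must make sure the raised neighbour satisfies $f(u)\le k$, so that $f(u)+1$ is a legal label. This follows from tightness: $\sum_{w\in AN(v)}(f(w)-1)=k-1$ forces $f(w)\le k$ for every $w\in AN(v)$. Hence any $u\in AN(v)$ works, and choosing one of maximum value is unnecessary. Your exploratory attempt in Step 3 to force a strict decrease should be dropped, as your $K_2$ example shows it cannot succeed.

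Your closing sentence overstates what the existential version provides for the paper. In the proof of Theorem~\ref{thm:local-sum-efficient}, (2)$\Rightarrow$(1), the proposition is applied to a specific function $f$. That $f$ is only assumed to satisfy $f(N[v])=k+1$ for all $v$, not to have minimum weight. Neither reading of the proposition justifies that step as written.
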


\noindent
We use $P_n$ for the path on vertices $\{0,1,\ldots,n-1\}$ and $C_m$ for the cycle
on $\{0,1,\ldots,m-1\}$ (indices modulo $m$).
The Cartesian product $C_m\Box P_n$ has vertex set $V(C_m)\times V(P_n)$; vertices
$(i,j)$ and $(i',j')$ are adjacent if either $i=i'$ and $jj'\in E(P_n)$, or
$j=j'$ and $ii'\in E(C_m)$.

\smallskip
\noindent
Note that in the cylindrical graph $C_m\Box P_n$, every interior vertex has degree $4$, 
and hence, its closed neighborhood contains at most five vertices.

\smallskip
\noindent
A set $D \subseteq V(G)$ is an \emph{efficient dominating set} if for every vertex
$v \in V(G)$ there exists a unique vertex $u \in D$ such that
$v \in N[u]$. A graph is said to be \emph{efficient} if it contains an efficient dominating
set.

\smallskip
\noindent
A set $D \subseteq V(G)$ is called a \emph{packing} if for every two distinct vertices 
$u,v\in D$ we have $N[u]\cap N[v]=\emptyset$. 
The maximum cardinality of a packing in $G$ is called the \emph{packing number} 
of $G$ and is denoted by $\rho(G)$.

\section{Lower bound for $\gamma_{[k]R}(C_m \Box P_n)$}

\smallskip
\noindent
In this section, we derive a lower bound for
$\gamma_{[k]R}(C_m \Box P_n)$ by studying configurations in which
each closed neighborhood has total weight exactly $k+1$.
This represents the optimal local scenario for $[k]$-Roman domination,
since no smaller neighborhood sum can guarantee domination. By characterizing when such extremal local configurations can occur, we obtain a general lower bound and identify the cases in which this local optimum is attainable.

\begin{theorem}\label{thm:local-sum-efficient}
Let $G=C_m\Box P_n$.
The following statements are equivalent:
\begin{enumerate}
\item $G$ admits an efficient dominating set.
\item There exists a $[k]$-Roman dominating function
$f:V(G)\to\{0,1,\dots,k+1\}$ such that
\[
\sum_{u\in N[v]} f(u)=k+1
\quad \text{for every } v\in V(G).
\]
\item $G$ is either $C_m\Box P_1$ with $m\equiv 0 \pmod{3}$,
or $C_m\Box P_2$ with $m\equiv 0 \pmod{4}$.
\end{enumerate}
\end{theorem}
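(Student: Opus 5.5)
The plan is to prove (1)$\Rightarrow$(2)$\Rightarrow$(1) directly from the definitions, and then to prove (1)$\Leftrightarrow$(3) separately by a structural analysis of efficient dominating sets in cylinders.

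\textbf{(1)$\Rightarrow$(2).} Let $D$ be an efficient dominating set. Put $f(v)=k+1$ for $v\in D$ and $f(v)=0$ otherwise. Every closed neighborhood $N[v]$ contains exactly one vertex of $D$, so $f(N[v])=k+1$. Only vertices with $f(v)=0$ need checking. Such a vertex has exactly one positive neighbor, so $|AN(v)|=1$ and $f(N[v])=k+1=k+|AN(v)|$. Hence $f$ is a $[k]$-RDF.

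\textbf{(2)$\Rightarrow$(1).} The aim is to show that $f$ takes only the values $0$ and $k+1$; then $D=\{v: f(v)>0\}$ meets every $N[v]$ in exactly one vertex, so it is efficient.
\begin{itemize}
\item First, every vertex $w$ with $f(w)=0$ has exactly one positive neighbor, and that neighbor has value $k+1$. Indeed, $k+1=f(N[w])\ge k+|AN(w)|$ forces $|AN(w)|\le 1$. Since $f(N(w))=k+1>0$, there is exactly one positive neighbor, and it carries the whole sum $k+1$.
\item Now suppose some $v$ has $0<f(v)\le k$. Then $f(N(v))=k+1-f(v)\ge 1$.
\item If $f(v)<k$, the RDF condition gives $|AN(v)|\le 1$, so $v$ has a unique positive neighbor $u$.
\item If $f(v)=k$, then $f(N(v))=1$, which again forces a unique positive neighbor $u$ (with $f(u)=1$).
\item In both cases, since $m\ge 3$ every vertex has degree at least $2$, so $v$ has a neighbor $w\ne u$ with $f(w)=0$.
\item By the first item, the unique positive neighbor of $w$ must have value $k+1$. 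But $v$ is a positive neighbor of $w$ with $f(v)\le k$, a contradiction.
\end{itemize}
(For the case $f(v)=k$ with $k\ge 2$, one could alternatively invoke the observation that $u$ has value $1<k$; the argument above already covers all $k$.)

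\textbf{(1)$\Leftrightarrow$(3).} Sufficiency is by explicit sets.
\begin{itemize}
\item For $C_m\Box P_1\cong C_m$ with $3\mid m$, take $\{(3t,0)\}$.
\item For $C_m\Box P_2$ with $4\mid m$, take $\{(4t,0),(4t+2,1)\}$ and check directly that every vertex is covered exactly once.
\end{itemize}
For necessity, first count. Interior vertices have closed neighborhoods of size $5$, while vertices in the boundary rows $j\in\{0,n-1\}$ have size $4$ (or $3$ when $n=1$). Efficiency means the closed neighborhoods of $D$ partition $V$.
\begin{itemize}
\item For $n=1$, this gives $3|D|=m$.
\item For $n=2$, $G$ is $3$-regular, so $4|D|=2m$. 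One then checks that $D$ must alternate between the two rows with period $4$, which requires $4\mid m$.
\end{itemize}

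\textbf{The case $n\ge 3$ is the main obstacle.} I would attack it by a row-propagation argument on the boundary row $j=0$. A vertex $(i,0)\notin D$ is dominated either within row $0$ or from $(i,1)$. Vertices of $D$ in row $0$ are pairwise at distance at least $3$ along the cycle. Each such vertex $(i,0)$ forces its cycle-neighbors $(i\pm1,0)$ to be dominated only by it, hence forces $(i\pm 1,1)\notin D$, and also forces $(i,1)\notin D$ and $(i\pm1,2)\notin D$. Following the dominators of $(i\pm 2,0)$ and of the vertices of row $1$, I expect the local configuration to be forced and then to fail. Concretely, some vertex of row $1$ (for instance $(i+1,1)$ or $(i+2,1)$) should end up either doubly dominated or undominated, unless $n=2$.

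A complementary check is to intersect the counting identity $5|D|-(\text{boundary loss})=mn$ with the constraints on how many $D$-vertices lie in rows $0$ and $1$. I would try the forcing argument first, doing the case split on whether row $0$ contains a vertex of $D$ at all. If row $0$ contains no vertex of $D$, then row $1$ must dominate all of row $0$, so row $1\subseteq D$, which is impossible since $D$ is a packing.
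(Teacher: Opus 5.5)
\textbf{Overall.} Your (1)$\Rightarrow$(2) and (2)$\Rightarrow$(1) are correct, and your (2)$\Rightarrow$(1) is more careful than the paper's. The paper excludes the value $k$ by appealing to Proposition~\ref{prop1}, which concerns $\gamma_{[k]R}$-functions with $k\ge 2$. It therefore does not apply to an arbitrary $f$ as in (2). Your argument instead uses two facts: every $0$-vertex has a unique positive neighbour, of value $k+1$, and $\deg(v)\ge 2$. This works for every $k\ge 1$ with no minimality assumption. For (1)$\Leftrightarrow$(3) the paper gives no argument and simply cites \cite[Theorem~4]{BrezovnikZerovnik2026_1}. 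You chose to prove it yourself, and that is where your proposal has a genuine gap.

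\textbf{The gap.} The implication (1)$\Rightarrow$(3) for $n\ge 3$ is never established; you only say you expect the forcing to fail. One of your ``forced'' exclusions is also false. The vertex $(i\pm1,2)$ is at distance $3$ from $(i,0)$, so the packing property does not exclude it. At that stage only $(i\pm1,0)$, $(i\pm2,0)$, $(i,1)$, $(i\pm1,1)$ and $(i,2)$ are excluded. Ruling out $(i+1,2)$ is exactly the step that needs an argument.

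\textbf{How to close it.} Your route does work; here is the missing chain.
\begin{enumerate}
\item Take $(0,0)\in D$. The vertex $(1,1)$ must be dominated by $(2,1)$ or by $(1,2)$.
\item Suppose $(1,2)\in D$. Then $(2,1)\notin D$, so $(2,0)$ can only be dominated by $(3,0)$. Now every vertex of $N[(2,1)]=\{(1,1),(2,1),(3,1),(2,0),(2,2)\}$ lies within distance $2$ of $(1,2)$ or of $(3,0)$, so $(2,1)$ is undominated.
\item Hence $(2,1)\in D$, and by the reflection $i\mapsto -i$ also $(-2,1)\in D$.
\item Every vertex of $N[(1,2)]$ in rows $1$ and $2$, namely $(1,1)$, $(0,2)$, $(1,2)$ and $(2,2)$, lies within distance $2$ of $(0,0)$ or of $(2,1)$.
\item For $n=3$ this leaves $(1,2)$ undominated. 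For $n\ge 4$ it forces $(1,3)\in D$, and symmetrically $(-1,3)\in D$. These two vertices are at distance $2$, a contradiction.
\end{enumerate}
For $m\le 6$ some of the named vertices coincide or become adjacent, but the same checks go through. In several cases the contradiction appears earlier; for example, $(2,1)$ and $(-2,1)$ clash when $m\in\{5,6\}$. Combined with your observation that row $0$ cannot be disjoint from $D$, this completes (1)$\Rightarrow$(3). Alternatively, you can cite the known characterization, as the paper does.
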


\noindent
\begin{proof}

\noindent
(1)$\Rightarrow$(2).
Let $D$ be an efficient dominating set of $G$. Define $f$ by
$f(v)=k+1$ for $v\in D$ and $f(v)=0$ otherwise. Since every vertex
belongs to the closed neighborhood of exactly one vertex of $D$, we have
$\sum_{u\in N[v]} f(u)=k+1$ for all $v\in V(G)$.

\medskip
\noindent
(2)$\Rightarrow$(1).
Assume there exists a $[k]$-Roman dominating function
$f:V(G)\to\{0,1,\dots,k+1\}$ such that
\[
\sum_{u\in N[v]} f(u)=k+1
\quad \text{for every } v\in V(G).
\]

\noindent
We first claim that no vertex receives weight $k$.
Suppose $f(v)=k$ for some vertex $v$.
By Proposition~\ref{prop1}, no vertex receives weight $1$.
Hence, every neighbor of $v$ must have weight $0$, since otherwise some
closed neighborhood would have total weight exceeding $k+1$.
But then each neighbor of $v$ must have another neighbor of weight at least $2$
in order to satisfy the $[k]$-Roman condition, which again forces some closed
neighborhood sum to exceed $k+1$, a contradiction. Thus $f(v)\neq k$ for all $v$.

\smallskip
\noindent
Next, we show that for every vertex $x$, the closed neighborhood $N[x]$
contains at most one vertex of positive weight.
Suppose to the contrary that there exists $x$ such that $N[x]$
contains two distinct vertices $a$ and $b$ with $f(a)>0$ and $f(b)>0$.

\smallskip
\noindent
If $a$ and $b$ are not adjacent, then there exists a vertex $y$ with
$a,b\in N(y)$, and consequently $\sum_{u\in N[y]} f(u) > k+1$,
contrary to the assumption. Hence, $a$ and $b$ must be adjacent. Let $z$ be a neighbor of $a$ distinct from $b$.
Then $f(z)=0$, since otherwise $N[a]$ would contain at least two vertices of
positive weight and therefore have total weight exceeding $k+1$.
Since $z$ already receives positive weight from $a$ (with $f(a) < k+1$, the $[k]$-Roman condition would require the existence of another neighbor of $z$ with positive weight, it follows that $N[z]$ would contain at least two vertices of positive weight. Consequently,
\[
\sum_{u\in N[z]} f(u) > k+1,
\]
contradicting the assumption.

\smallskip
\noindent
Therefore, in such a configuration, each closed neighborhood contains exactly one vertex of positive weight, and this weight equals $k+1$.

\smallskip
\noindent
Consequently, the set
\[
D=\{v\in V(G): f(v)=k+1\}
\]
satisfies $|N[v]\cap D|=1$ for all $v$, and hence $D$ is an efficient dominating set.

\medskip
\noindent
(1)$\Leftrightarrow$(3).
By \cite[Theorem~4]{BrezovnikZerovnik2026_1}, the graph $C_m\Box P_n$
admits an efficient dominating set if and only if either $n=1$ and
$m\equiv 0 \pmod{3}$, or $n=2$ and $m\equiv 0 \pmod{4}$.
\end{proof}

\noindent
The previous theorem shows that configurations in which every closed
neighborhood has total weight exactly $k+1$ correspond precisely to
efficient domination. In particular, such configurations represent the
most economical local arrangement of weights.

\smallskip
\noindent
We now use this observation to derive a general lower bound for
$\gamma_{[k]R}(C_m \Box P_n)$.

\begin{corollary}\label{cor:general-cylinder-bound}
Let $G=C_m\Box P_n$ with $m\ge 3$.
Then
\[
\gamma_{[k]R}(G)
>
(k+1)\left\lceil \frac{mn}{5} \right\rceil.
\]
\end{corollary}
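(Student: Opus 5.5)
The natural approach is a double-counting (discharging) argument over closed neighborhoods, with Theorem~\ref{thm:local-sum-efficient} used to rule out the equality case. I first point out one caveat. The strict inequality cannot hold in the efficient cases of Theorem~\ref{thm:local-sum-efficient}(3). For $C_3\Box P_1$ or $C_4\Box P_2$, the efficient dominating set from part (1) gives weight exactly $(k+1)\lceil mn/5\rceil$. So I would prove the statement for graphs that are not efficient, and state the efficient cases as the equality cases.

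\emph{Step 1 (local bound).} Fix a $\gamma_{[k]R}$-function $f$, and assume first $k\ge 2$, so that by Proposition~\ref{prop1} no vertex has value $1$. I would show that $f(N[v])\ge k+1$ for every vertex $v$.
\begin{itemize}
\item If $f(v)=k+1$, this is immediate.
\item If $f(v)<k$, then $v$ must have a positive neighbour, since otherwise $f(N[v])=f(v)<k$. Hence $|AN(v)|\ge1$ and $f(N[v])\ge k+1$.
\item If $f(v)=k$ and all neighbours of $v$ are $0$, each neighbour $u$ has $f(u)=0<k$. Since $u$ has $v$ as a positive neighbour, it needs total weight at least $k+|AN(u)|\ge k+1$. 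So $u$ has a further positive neighbour. In that case I would either accept $f(N[v])=k$ and handle it via a weighting shift, or modify $f$ locally (raise $v$ to $k+1$ and lower a redundant neighbour) to reach a minimum function with all local sums at least $k+1$.
\end{itemize}
Making this third case airtight, uniformly in $k$ (including $k=1$, where values $1$ are allowed), is the first technical point.

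\emph{Step 2 (double counting).} Summing the local bound over all $v$ gives
\[
(k+1)mn\le\sum_{v}f(N[v])=\sum_u f(u)\,|N[u]|\le 5\,w(f),
\]
because $|N[u]|\le 5$ in $C_m\Box P_n$. Equality forces every local sum to equal $k+1$. By Theorem~\ref{thm:local-sum-efficient}, that happens only in the efficient cases, so otherwise the inequality is strict. Moreover, if $n\ge2$, boundary rows have $|N[u]|=4$, which also yields strictness as soon as some boundary vertex carries positive weight.

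\emph{Step 3 (the ceiling).} The main obstacle is upgrading $w(f)>(k+1)mn/5$ to $w(f)>(k+1)\lceil mn/5\rceil$, since in general $\lceil(k+1)mn/5\rceil<(k+1)\lceil mn/5\rceil$. I would try to count ``centers'' rather than weight. Call a vertex \emph{heavy} if $f(u)\ge k$, and let $H$ be the set of heavy vertices. For $k\ge2$, I would show that every vertex of value $0$ lies in $N[h]$ for some $h\in H$; otherwise it would need at least $k$ weight spread over neighbours with values in $[2,k-1]$. Vertices of intermediate value would be charged to the heavy structure they support.

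If that works, then $|H|\ge\lceil mn/5\rceil$. The total weight then exceeds $(k+1)|H|$ unless all heavy vertices have value $k+1$ and form an efficient dominating set. That gives the desired strict bound outside the cases of Theorem~\ref{thm:local-sum-efficient}(3). I expect this charging of intermediate-valued vertices to heavy ones to be the hardest part, and it may need a weaker constant if it fails.
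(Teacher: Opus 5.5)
You are right that the strict inequality fails for $C_3\Box P_1$ and $C_4\Box P_2$. However, excluding the efficient graphs does not rescue it, so Step 3 cannot be completed.

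\textbf{A non-efficient counterexample.} In $C_3\Box P_2$, which is not efficient ($6$ is not a multiple of $4$), put $f(0,0)=f(1,1)=k+1$ and $f=0$ elsewhere. The vertices $(1,0)$ and $(0,1)$ have two positive neighbours and closed-neighbourhood weight $2k+2\ge k+2$. The vertices $(2,0)$ and $(2,1)$ have one positive neighbour and weight $k+1$. So $\gamma_{[k]R}(C_3\Box P_2)\le 2(k+1)=(k+1)\lceil 6/5\rceil$ for every $k$.

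\textbf{Where Step 3 breaks.} The failing inference is ``the total weight exceeds $(k+1)|H|$ unless all heavy vertices have value $k+1$ and form an efficient dominating set.'' If every positive label is $k+1$, then $w(f)=(k+1)|H|$ exactly, whether or not $H$ is efficient. Strictness would therefore need $|H|>\lceil mn/5\rceil$, which fails whenever the domination number $\gamma(G)$ equals $\lceil mn/5\rceil$; here $\gamma=2$. Efficiency only governs equality in the counting bound $\sum_{h\in H}|N[h]|\ge mn$. It says nothing about whether $|H|$ can equal $\lceil mn/5\rceil$. For the same reason, ``efficient cases $=$ equality cases'' is also wrong in the other direction: $C_9\Box P_1$ and $C_{12}\Box P_2$ are efficient but satisfy the strict inequality.

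Independently, the claim $|H|\ge\lceil mn/5\rceil$ has no support. For $k\ge 3$ a zero vertex can be served entirely by intermediate labels, for instance four neighbours labelled $\lceil (k+4)/4\rceil<k$. The uniform labellings in the proof of Theorem~\ref{C5Cn} have no heavy vertices at all when $k\ge3$. So nothing forces $H$ to exist, let alone to dominate $G$.

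\textbf{What Steps 1--2 can deliver.} They give a fractional bound; the paper offers nothing beyond Theorem~\ref{thm:local-sum-efficient}, which is essentially these steps. Even this needs a different repair of Step 1. The local inequality $f(N[v])\ge k+1$ genuinely fails in $[k]$-RDFs: the vertex $(1,0)$ in the paper's $C_5\Box P_3$ example has value $k$ and only zero neighbours. ``Lowering a redundant neighbour'' is not available, since all neighbours of such a $v$ are $0$.

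Use discharging instead. The only vertices with $f(N[v])<k+1$ are those with $f(v)=k$ and $AN(v)=\emptyset$, each with deficit exactly $1$. Every neighbour $u$ of such a $v$ has $f(u)=0$ and $|AN(u)|\ge 2$. If $u$ has $t$ such neighbours, then $f(N[u])-(k+1)\ge\max\{1,t-1\}\ge t/2$. Split each deficit equally among the at least two neighbours of $v$. This yields $\sum_v f(N[v])\ge (k+1)mn$ for every $[k]$-RDF and every $k\ge1$, without needing Proposition~\ref{prop1}. Hence $\gamma_{[k]R}(G)\ge\lceil (k+1)mn/5\rceil$.

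Reaching $(k+1)\lceil mn/5\rceil$ needs genuinely new input. For $n\ge2$, one candidate is the boundary term in $5w(f)\ge (k+1)mn+f(F_0)+f(F_{n-1})$. In any case the statement must exclude at least $C_3\Box P_1$, $C_6\Box P_1$, $C_3\Box P_2$, $C_4\Box P_2$ and $C_8\Box P_2$.
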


\section{Upper bounds for $\gamma_{[k]R}(C_m\Box P_n)$}

\smallskip
\noindent
In this section we derive upper bounds for the $[k]$-Roman domination number
of $C_m\Box P_n$ by explicit constructions.
Our approach is uniform for all values of $m$:
we define a periodic labeling along the path direction $P_n$. The patterns are presented in matrix form, where the columns are indexed by the vertices of $P_n$ (each column representing a fibre), and the rows are indexed by the vertices of the cycle $C_m$.

\subsection{Upper bounds for $m=5$}

\smallskip
\noindent
We begin our construction of upper bounds with the case $m=5$. 
Due to the regular structure of $C_5\Box P_n$, it is possible to define a periodic $[k]$-Roman dominating function along the path direction. 
This construction leads to the following linear upper bound.

\begin{theorem}\label{thm:m5}
Let $n\ge 2$. Then
\begin{equation}\label{eq:ub-C5Pn-linear}
\gamma_{[k]R}(C_{5}\Box P_n)\le n(k+1)+2k.
\end{equation}
\end{theorem}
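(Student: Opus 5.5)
We prove the bound by an explicit construction: a diagonal perfect code of the infinite cylinder $C_5\Box\mathbb{Z}$, truncated to $n$ columns, with one corrective vertex added in each end column.

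\textbf{Step 1: the code in the infinite cylinder.} Write vertices as $(i,j)$, with $i\in\mathbb{Z}_5$ the row (cycle coordinate) and $j$ the column (path coordinate). Let
\[
D=\{(i,j): i\equiv 2j \pmod 5\}
\]
in $C_5\Box\mathbb{Z}$. Consider any vertex $(a,b)$. The rows of the code vertices in columns $b-1$, $b$ and $b+1$ are $2b-2$, $2b$ and $2b+2$ modulo $5$. So $(a,b)$ lies in $N[D]$ via:
\begin{itemize}
\item its own column exactly when $a\in\{2b-1,2b,2b+1\}$;
\item column $b-1$ exactly when $a=2b-2$;
\item column $b+1$ exactly when $a=2b+2$.
\end{itemize}
These five residues are pairwise distinct modulo $5$. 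Hence every vertex is dominated exactly once, and $D$ is an efficient dominating set of $C_5\Box\mathbb{Z}$.

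\textbf{Step 2: truncation and repair.} Restrict to columns $0,\dots,n-1$. By Step 1, the only vertices that lose their unique dominator are:
\begin{itemize}
\item in column $0$, the vertex $(2\cdot 0-2,0)=(3,0)$, whose dominator was in column $-1$;
\item in column $n-1$, the vertex $(2(n-1)+2,\,n-1)=(2n,\,n-1)$ (row taken mod $5$), whose dominator was in column $n$.
\end{itemize}
Define $f(v)=k+1$ for $v\in D$ restricted to these columns, $f(3,0)=k$, $f(2n \bmod 5,\,n-1)=k$, and $f(v)=0$ otherwise. These two repaired vertices are not in $D$: the code vertex of column $0$ is in row $0\neq 3$, and that of column $n-1$ is in row $2n-2\not\equiv 2n$. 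For $n\ge 2$ they lie in different columns, so they are distinct. The weight is therefore $n(k+1)+2k$.

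\textbf{Step 3: verification of the $[k]$-RDF condition.} Vertices with $f(v)\ge k$ impose no condition. Every vertex with $f(v)=0$ other than the two repaired ones has exactly one neighbor in $D$, of value $k+1$. It may in addition be adjacent to a repaired vertex of value $k$.
\begin{itemize}
\item If it has only the code neighbor, then $f(N[v])=k+1=k+|AN(v)|$.
\item If it also has a repaired neighbor, then $f(N[v])\ge 2k+1\ge k+2=k+|AN(v)|$ for $k\ge1$.
\end{itemize}
A zero vertex adjacent to both repaired vertices would only make the inequality easier, giving $3k+1\ge k+3$.

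The only delicate step is Step 2: identifying exactly which vertices become undominated after truncation, and checking that the added value-$k$ vertices do not break the condition $f(N[v])\ge k+|AN(v)|$ for nearby zeros. The residue bookkeeping of Step 1 handles the first point, and Step 3 handles the second.
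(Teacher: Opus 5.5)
Your proposal is correct and is essentially the paper's construction: the paper's pattern places $k+1$ on a diagonal that shifts by two rows per fibre (your perfect code $i\equiv 2j \pmod 5$), and its boundary rule puts weight $k$ on exactly the one vertex of $F_0$ (resp.\ $F_{n-1}$) whose dominator would lie in the missing fibre, giving weight $n(k+1)+2k$. Your verification is more careful than the paper's, because you explicitly account for an extra weight-$k$ neighbour raising $|AN(v)|$ by one while raising $f(N[v])$ by $k\ge 1$; the paper only checks that each zero vertex has a neighbour of value $k$ or $k+1$.
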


\noindent
\begin{proof}
We define a labeling $f$ that is periodic along the $P_n$-direction.
The construction is based on a block of five consecutive fibres, which is then
repeated periodically along the path.
If $n$ is not a multiple of $5$, the pattern is truncated at the end and the
last fibre is corrected locally, as described in the construction.

\smallskip
\noindent
{The following matrix describes the labeling on five consecutive fibres.}

{\setcounter{MaxMatrixCols}{11}
\begin{equation}\label{pat:m0-period}
\begin{pmatrix}
\cdots & 0   & | & k+1 & 0   & 0   & 0   & 0   & | & k+1 &\cdots \\
\cdots & 0   & | & 0   & 0   & 0   & k+1 & 0   & | & 0   &\cdots \\
\cdots & 0   & | & 0   & k+1 & 0   & 0   & 0   & | & 0   &\cdots \\
\cdots & k+1 & | & 0   & 0   & 0   & 0   & k+1 & | & 0   &\cdots \\
\cdots & 0   & | & 0   & 0   & k+1 & 0   & 0   & | & 0   &\cdots
\end{pmatrix}.
\end{equation}
}

\noindent
From one column to the next, the position of the value $k+1$ is shifted exactly
two rows downward (cyclically modulo $m$).
After a finite number of columns, the value $k+1$ therefore returns to the same
row, which makes the periodic nature of the pattern explicit.

\smallskip
\noindent
Pattern \eqref{pat:m0-period} is applied to the interior fibres
$F_1,\ldots,F_{n-2}$.
The labels on the boundary fibres are then defined directly in terms of the
labeling function $f$.

\smallskip
\noindent
If $f(j,1)=k+1$ for some $j\in\{0,1,\dots,m-1\}$, then in the initial fibre $F_0$
we set
\[
f(j+1,0)=k
\qquad\text{and}\qquad
f(j-2,0)=k+1,
\]
with all indices taken modulo $m$, and all remaining vertices of $F_0$ receive
label $0$.

\smallskip
\noindent
Similarly, if $f(j,n-2)=k+1$ for some $j\in\{0,1,\dots,m-1\}$, then in the last
fibre $F_{n-1}$ we set
\[
f(j-1,n-1)=k
\qquad\text{and}\qquad
f(j+2,n-1)=k+1,
\]
again with all indices taken modulo $m$, while all other vertices of $F_{n-1}$
are assigned the value $0$.

\smallskip
\noindent
By periodic repetition of Pattern \eqref{pat:m0-period}, every interior vertex labeled $0$
has a neighbor with value at least $k+1$, either within the same fibre or in
one of the two adjacent fibres.
Vertices in the first and last column of each block with weight 0 also have at least one neighbor valued $k$ or $k+1$.
Hence, for every vertex $v$, the sum of labels in its closed neighborhood
satisfies the required $[k]$-Roman domination condition.
Therefore, the labeling $f$ is a valid $[k]$-RDF with desired weight.
\end{proof}

\smallskip
\noindent
{We conclude with two small examples for the graphs $C_5 \Box P_3$ and $C_5 \Box P_4$.

$$\begin{array}{c@{\qquad}c} \begin{pmatrix} 0 & k+1 & 0 \\ k & 0 & 0 \\ 0 & 0 & k+1 \\ k+1 & 0 & 0 \\ 0 & 0 & k \end{pmatrix} \qquad & \begin{pmatrix} 0 & k+1 & 0 & 0 \\ k & 0 & 0 & k \\ 0 & 0 & k+1 & 0 \\ k+1 & 0 & 0 & 0 \\ 0 & 0 & 0 & k+1 \end{pmatrix} \end{array}$$
\smallskip

\noindent
In both examples, the labeling on the boundary fibres is adjusted according to
the construction described in the proof, ensuring that the $[k]$-Roman
domination condition is satisfied.}

\smallskip
\noindent
In the following, we present an alternative construction that for certain $n$ and $k$ yields better bounds.
\begin{theorem}\label{C5Cn}
For $n\ge 4$,
\begin{equation}\label{U}
\gamma_{[k]R}(C_5 \Box P_n)
\;\le\;
5 (n-2)\left\lceil\frac{k+4}{5}\right\rceil
\;+\;
10\left\lceil
\frac{k+3-\left\lceil\frac{k+4}{5}\right\rceil}{3}
\right\rceil
\le\;
\frac{3nk+27n+2k-2}{3}.
\end{equation}
\end{theorem}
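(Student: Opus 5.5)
The plan is to prove the first inequality in \eqref{U} with an explicit labeling that is constant on each fibre, and then to bound the ceilings crudely to get the second inequality. Write
\[
a=\left\lceil\frac{k+4}{5}\right\rceil,\qquad b=\left\lceil\frac{k+3-a}{3}\right\rceil .
\]
Define $f(i,j)=a$ on every vertex of the interior fibres $F_1,\dots,F_{n-2}$. On the two boundary fibres $F_0$ and $F_{n-1}$, put $f(i,j)=b$. The weight is then exactly $5(n-2)a+10b$, which is the middle expression in \eqref{U}. Since every vertex receives a positive label, $|AN(v)|=\deg(v)$ for every $v$. So it only remains to check the $[k]$-Roman condition $f(N[v])\ge k+\deg(v)$ at every vertex whose label is below $k$.

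The verification splits by the position of $v$.

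\emph{Boundary vertex} $v\in F_0$ (and symmetrically $v\in F_{n-1}$). Here $\deg(v)=3$, and $N[v]$ consists of three vertices labelled $b$ and one labelled $a$. We have $f(N[v])=3b+a\ge (k+3-a)+a=k+3$, as required. This is exactly how $b$ was chosen.

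\emph{Deep interior vertex} $v\in F_j$ with $2\le j\le n-3$. Here $\deg(v)=4$ and $f(N[v])=5a\ge k+4$ by the choice of $a$.

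\emph{Vertex in $F_1$ or $F_{n-2}$.} The hypothesis $n\ge4$ guarantees that a vertex of $F_1$ has its other path-neighbour in the interior fibre $F_2$, and similarly for $F_{n-2}$. So $f(N[v])=4a+b$, and we need $b\ge k+4-4a$.

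I expect this last case to be the only real obstacle. Since $a\ge (k+4)/5$, we get $k+4-4a\le (k+4)/5\le a$, so it suffices to have $b\ge a$. From the definition, $b\ge (k+3-a)/3\ge a$ holds as soon as $k+3\ge 4a$. Using $a\le (k+8)/5$, this is guaranteed for $k\ge17$. For $k\le16$ I would check $4a+b\ge k+4$ directly from a short table over $k$; for instance, $k=1$ gives $a=b=1$, which works. I would also note that when $a\ge k$ (for example $k=1$) the condition is vacuous on interior vertices.

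For the second inequality, use $a\le (k+8)/5$ and
\[
b\le \frac{k+3-a}{3}+\frac23\le \frac{k+5-(k+4)/5}{3}=\frac{4k+21}{15}.
\]
Substituting gives
\[
5(n-2)a+10b\le (n-2)(k+8)+\frac{8k+42}{3}=\frac{3nk+24n+2k-6}{3}.
\]
This is at most $\frac{3nk+27n+2k-2}{3}$ because $3n+4\ge0$.
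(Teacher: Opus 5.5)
Your proposal is correct and uses the same fibre-constant labeling as the paper, with sharper but equally valid ceiling estimates for the second inequality. It is in fact more careful than the paper, which asserts $f(N[v])\ge 5\left\lceil\frac{k+4}{5}\right\rceil$ on all interior fibres without noting that vertices of $F_1$ and $F_{n-2}$ receive only $4a+b$. Your deferred table for $k\le 16$ does check out, but you can skip it: $4a+b\ge 4a+\frac{k+3-a}{3}=\frac{11a+k+3}{3}\ge k+4$, since $11a\ge \frac{11(k+4)}{5}\ge 2k+9$ for every $k\ge 1$.
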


\noindent
\begin{proof}
For each interior fibre $1 \le i \le n-2$, assign weight
\[
\left\lceil \frac{k+4}{5} \right\rceil
\]
to each vertex of the fibre.
On each boundary fibre (i.e., $i=0$ and $i=n-1$), assign weight
\[
\left\lceil
\frac{k+3-\left\lceil \frac{k+4}{5} \right\rceil}{3}
\right\rceil
\]
to each of its five vertices.

\smallskip
\noindent
This assignment ensures that every vertex $v$ in an interior fibre satisfies
\[
f(N[v]) \ge 5\left\lceil \frac{k+4}{5} \right\rceil \ge k+4.
\]
Likewise, every boundary vertex $v$ satisfies
\[
f(N[v]) \ge k+3.
\]

\smallskip
\noindent
Therefore, $f$ is a $[k]$-RDF on $C_5\Box P_n$.
Its total weight is at most the contribution of the interior fibres,
which equals
\[
5(n-2)\left\lceil \frac{k+4}{5} \right\rceil,
\]
plus the contribution of the two boundary fibres, which equals
\[
10\left\lceil
\frac{k+3-\left\lceil \frac{k+4}{5} \right\rceil}{3}
\right\rceil.
\]

\smallskip
\noindent
Using the inequalities $\lceil x \rceil \le x+1$ and $\lceil x \rceil \ge x$, we obtain
\[
\left\lceil \frac{k+4}{5} \right\rceil \le \frac{k+4}{5}+1
\quad\text{and}\quad
\left\lceil
\frac{k+3-\left\lceil \frac{k+4}{5} \right\rceil}{3}
\right\rceil
\le
\frac{k+3-\frac{k+4}{5}}{3}+1
=
\frac{4k+26}{15}.
\]
Substituting these bounds yields
\[
w(f)
\le
5(n-2)\left(\frac{k+4}{5}+1\right)
+
10\cdot\frac{4k+26}{15}
=
\frac{3nk+27n+2k-2}{3},
\]
which completes the proof.
\end{proof}

\smallskip
\noindent
The construction above is still not tight, since the resulting neighborhood
sums frequently exceed the minimum required by the $[k]$-Roman domination condition. For certain values of $k$, this excess can be exploited to
reduce the total weight: one may increase the base assignment to
$\left\lceil\frac{k+5}{5}\right\rceil$ and subsequently decrease the weight
by one on the vertices belonging to an appropriately chosen packing set.
{This refinement leads to Theorem~\ref{C5Pn_2}. 
In its proof, we will require the following lemma concerning the packing number.}

\begin{lemma}\label{lem:packing-C5Pn}
The packing number of $C_5\Box P_n$ is
$\rho\!\left(C_5\Box P_n\right)= n .$
\end{lemma}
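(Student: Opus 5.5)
The plan is to prove the two inequalities $\rho(C_5\Box P_n)\le n$ and $\rho(C_5\Box P_n)\ge n$ separately. Both rest on the standard reformulation that a set $D$ is a packing if and only if any two distinct vertices of $D$ are at distance at least $3$ in $G$. Indeed, $N[u]\cap N[v]\neq\emptyset$ holds exactly when $u$ and $v$ are equal, adjacent, or have a common neighbour. In $C_m\Box P_n$ the distance is additive: $d((i,j),(i',j'))=d_{C_m}(i,i')+|j-j'|$.

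\emph{Upper bound.} I will show that every fibre $F_j=\{(i,j): i\in\{0,\dots,4\}\}$ contains at most one vertex of a packing. Any two distinct vertices of $C_5$ are at distance $1$ or $2$ in $C_5$. Hence any two distinct vertices of the same fibre are at distance at most $2$ in $G$, so their closed neighbourhoods meet. Since there are $n$ fibres, every packing has at most $n$ vertices.

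\emph{Lower bound.} I will exhibit a packing with exactly one vertex per fibre, taking $D=\{(2j \bmod 5,\, j): 0\le j\le n-1\}$. This is the same "shift by two rows" idea as in Pattern~\eqref{pat:m0-period}. To verify that $D$ is a packing, take $(2j,j)$ and $(2j',j')$ with $t=j'-j>0$.
\begin{itemize}
\item If $t=1$, the rows differ by $2$, so the distance is $1+2=3$.
\item If $t=2$, the rows differ by $4\equiv -1 \pmod 5$, so the cycle distance is $1$ and the total distance is $2+1=3$.
\item If $t\ge 3$, the distance is at least $3$ automatically.
\end{itemize}
Thus all pairwise distances are at least $3$, $D$ is a packing, and $|D|=n$.

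Neither step is a real obstacle. The only point needing care is the case $t=2$ in the lower bound: there the cyclic wrap-around in $C_5$ brings the two rows close, and one must check that the combined distance still reaches $3$.
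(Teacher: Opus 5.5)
Your proof is correct and follows the same route as the paper. The upper bound comes from each fibre holding at most one packing vertex, since any two vertices of $C_5$ are within distance $2$. The lower bound comes from a construction that shifts the chosen row by two from one fibre to the next; the paper's figure uses rows $1,3,0,2,4,\dots$, which is your pattern up to a cyclic relabelling, and your explicit distance check for $t=1$ and $t=2$ supplies the verification that the paper leaves to the figure.
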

\begin{proof}
It is clear that any packing set contains at most one vertex in each fibre $F_i$. 
Moreover, we can always choose a vertex in every fibre such that the vertices form a packing set, see for example Figure \ref{fig:pattern-C5Pn}.

\begin{figure}[ht!]
\centering
\begin{tikzpicture}[scale=0.75]

\foreach \i in {1,...,6}{
  \foreach \r in {0,1,2,3,4}{
    \draw (\i,-\r) rectangle (\i+1,-\r-1);
  }
}

\foreach \i in {0,...,5}{
  \node at (\i+1.5,0.6) {\small $F_{\i}$};
}

\foreach \r in {0,1,2,3,4}{
  \node[anchor=east] at (0.9,-\r-0.5) {\small $\r$};
}

\node at (1.5,-1.5) {\Large $\ast$}; 
\node at (2.5,-3.5) {\Large $\ast$}; 
\node at (3.5,-0.5) {\Large $\ast$}; 
\node at (4.5,-2.5) {\Large $\ast$}; 
\node at (5.5,-4.5) {\Large $\ast$}; 
\node at (6.5,-1.5) {\Large $\ast$}; 

\node at (8.2,-2.5) {\Large $\cdots$};

\end{tikzpicture}
\caption{A periodic packing pattern in $C_5\Box P_n$. The stars indicate the vertices selected in the packing set in each fibre.}
\label{fig:pattern-C5Pn}
\end{figure}

\noindent
Therefore, the packing number of $C_5\Box P_n$ equals $n$. 
\end{proof}

\smallskip
\noindent
Since the closed neighborhoods of vertices in a packing set are pairwise disjoint,
this adjustment does not violate the $[k]$-Roman domination condition.
Using the fact that $\rho(C_5\Box P_n)=n$,
we obtain the next result.

\begin{theorem}\label{C5Pn_2}
For $n\ge 4$,
\begin{equation}\label{B}
{
\gamma_{[k]R}(C_5\Box P_n)
}
\ \le\
5 (n-2) \left\lceil\frac{k+5}{5}\right\rceil
\ +\
10\left\lceil \frac{k+3-\left\lceil\frac{k+5}{5}\right\rceil}{3}\right\rceil 
\ -n \ \le\
\frac{3nk+30n+2k-10}{3}.
\end{equation}
\end{theorem}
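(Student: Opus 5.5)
We follow the construction in the proof of Theorem~\ref{C5Cn}, with a larger base value and a correction along a packing. Set $a=\left\lceil\frac{k+5}{5}\right\rceil$ and $b=\left\lceil\frac{k+3-a}{3}\right\rceil$. Define $g$ by $g(v)=a$ for every $v$ in an interior fibre $F_1,\dots,F_{n-2}$, and $g(v)=b$ for every $v$ in the boundary fibres $F_0,F_{n-1}$.

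We then take a packing $S$ with $|S|=n$, which exists by Lemma~\ref{lem:packing-C5Pn} (one vertex per fibre, as in Figure~\ref{fig:pattern-C5Pn}). Define $f(v)=g(v)-1$ for $v\in S$ and $f(v)=g(v)$ otherwise. Then
\[
w(f)=5(n-2)a+10b-n,
\]
which is the middle expression of \eqref{B}.

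\textbf{Validity.} We must check that $f$ is a $[k]$-RDF.

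For an interior vertex $v$, the labels satisfy $g(N[v])\ge 5a\ge k+5$. This is exact when $v$ is at distance at least two from the boundary. If $v$ lies in $F_1$ or $F_{n-2}$, one neighbour carries $b$ instead of $a$, so we should instead compare against $4a+b$. Since $S$ is a packing, $N[v]$ meets $S$ in at most one vertex. Hence $f(N[v])\ge g(N[v])-1$.

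For a boundary vertex, $g(N[v])=3b+a\ge k+3$, because $3b\ge k+3-a$. After the packing correction, $f(N[v])\ge k+2$.

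Next we bound $|AN(v)|$. When $a\ge 2$ and $b\ge 2$, all labels of $f$ are positive. Then $|AN(v)|=\deg(v)$, which is $4$ for interior vertices and $3$ for boundary vertices. For interior vertices the requirement becomes $f(N[v])\ge k+4$, which holds since $f(N[v])\ge k+5-1$. For boundary vertices the requirement is $k+3$.

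\textbf{Main obstacle.} The boundary fibres are where the argument is tight. There $f(N[v])\ge k+2$, but $k+3$ is needed, so we cannot spend the decrement there for free. One fix is to choose $S$ so that a vertex $v\in F_0$ satisfies the condition automatically. If $f(v)\ge k$, no condition is imposed at $v$, though this is rarely the case. A second possibility is that lowering a label to $0$ also removes that vertex from $AN$ for its neighbours, lowering their requirement by one. This happens whenever $b=1$ or $a=1$, that is, for small $k$. A third fix is to observe that the slack in $3b+a-(k+3)$ is typically positive.

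Failing these, we would place the decrements only in interior fibres, where there is a genuine slack of one. The loss then seems to be two of the $n$ decrements, which would have to be absorbed by the ceiling slack.

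I would first try to verify directly, using the periodicity of $a$ and $b$ modulo $15$ in $k$, that the slack $3b+a-(k+3)$ and $5a-(k+4)$ suffice. I would treat $F_1$ and $F_{n-2}$ with the mixed sum $4a+b$ separately, since that is where the check is least transparent.

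\textbf{Closed form.} The final inequality is routine. Substitute $a\le\frac{k+5}{5}+1$ in the first term and $b\le\frac{k+3-\frac{k+5}{5}}{3}+1=\frac{4k+25}{15}$ in the second, and simplify:
\[
(n-2)(k+10)+\frac{2(4k+25)}{3}-n=\frac{3nk+30n+2k-10}{3}.
\]
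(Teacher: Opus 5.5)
Your proposal leaves the decisive step open. Validity on the end fibres is listed as the ``main obstacle'' with several candidate fixes, none carried out, so it does not yet prove the theorem.

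The obstacle is genuine. Since $b=\lceil (k+3-a)/3\rceil$, one has $3b+a=k+3$ exactly when $3\mid k+3-a$, i.e.\ for $k\equiv 2,5,6,9,13\pmod{15}$. Any two vertices of a $C_5$-fibre have intersecting closed neighbourhoods, so a packing of size $n$ contains exactly one vertex $s_0\in F_0$. After the decrement, $f(N[s_0])=k+2<k+3=k+|AN(s_0)|$; for instance $k=5$ gives $a=b=2$ and $f(N[s_0])=7<8$. For $k=2$ one has $b=1$, so $s_0$ drops to $0$, but then $f(N[s_0])=4<5=k+|AN(s_0)|$. The shrinking of $AN$ helps the neighbours of $s_0$, not $s_0$ itself.

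Your fallback undercounts as well. A decremented $s_1\in F_1$ lies in $N[v]$ for its neighbour $v\in F_0$, which has zero slack. So only $F_2,\dots,F_{n-3}$ can carry decrements, and you obtain $5(n-2)a+10b-(n-4)$, four more than claimed. In the remaining residue classes your residue check would succeed: the end slack is at least $1$, and $4a+b\ge k+5$ on $F_1$ and $F_{n-2}$.

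The paper's proof does not contain the missing idea either. It simply asserts that the uniform labelling satisfies $f(N[v])\ge k+|AN(v)|+1$ for every $v$, which fails on $F_0$ and $F_{n-1}$ in exactly these classes. No ``uniform labels minus one on a packing'' variant can fix it, since for $b\ge 2$ such labellings are zero-free.

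To see why zero-free labellings cannot reach the bound, take $n=4$. Give weight $3/11$ to the vertices of $F_0,F_3$ and $2/11$ to those of $F_1,F_2$; every closed neighbourhood then has total weight $1$. A zero-free $[k]$-RDF satisfies $f(N[v])\ge k+\deg v$ for all $v$, so summing gives $w(f)\ge (50k+170)/11$. For $k=9$ this exceeds $56$, the middle expression of \eqref{B}. That value is strictly smaller than both \eqref{eq:ub-C5Pn-linear} (namely $58$) and \eqref{U} (namely $60$).

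The inequality does hold in this case, but only through a labelling with a zero, for example (rows indexed by $C_5$, columns $F_0,\dots,F_3$)
\[
\begin{pmatrix} 3&0&4&4\\ 4&3&2&2\\ 3&3&2&4\\ 2&3&2&4\\ 4&3&2&2 \end{pmatrix}.
\]
So the ends need a genuinely different construction, or an appeal to Theorem~\ref{thm:m5} wherever that bound is smaller.

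A minor slip in your last step: $(n-2)(k+10)+\frac{2(4k+25)}{3}-n=\frac{3nk+27n+2k-10}{3}$, not $\frac{3nk+30n+2k-10}{3}$. The stated final bound still follows, since it is larger.
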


\noindent
\begin{proof}
For every interior fibre $1 \le i \le n-2$, assign to each of its vertices
the value $\left\lceil \frac{k+5}{5} \right\rceil$.
For the boundary fibres, namely $i=0$ and $i=n-1$, assign the same value to
all five vertices in the fibre, where each vertex receives weight
\[
\left\lceil
\frac{k+3-\left\lceil \frac{k+5}{5} \right\rceil}{3}
\right\rceil.
\]
This labeling is defined so that for every vertex $v$ we have
\[
f(N[v]) \ge k + |AN(v)| + 1.
\]

\smallskip
\noindent
Let $S$ be a maximum packing of $C_5\Box P_n$.
By Lemma~\ref{lem:packing-C5Pn}, we have $|S|=n$.
Because the closed neighborhoods of distinct vertices in $S$ are pairwise disjoint,
the weight at each vertex of $S$ can be reduced by one.
Owing to the slack in the neighborhood sums, the $[k]$-Roman domination
requirement continues to hold after this adjustment.

\smallskip
\noindent
Hence, the modified labeling remains a $[k]$-RDF.
Consequently,
\[
\gamma_{[k]R}(C_5\Box P_n)
\le
5 (n-2) \left\lceil\frac{k+5}{5}\right\rceil
+
10\left\lceil
\frac{k+3-\left\lceil \frac{k+5}{5}\right\rceil}{3}
\right\rceil
-
n,
\]
which establishes the first bound. The final estimate follows from the
standard inequalities $\lceil x \rceil \le x+1$ and $\lceil x \rceil \ge x$.
\end{proof}

\smallskip
\noindent
In the following, we proceed with a systematic comparison of the Bounds \eqref{eq:ub-C5Pn-linear}, \eqref{U}, and \eqref{B}. Our goal is to determine for which ranges of the parameters $k$ and $n$ 
each bound provides the smallest value and to identify the transition thresholds 
at which the dominance changes.

\smallskip
\noindent
As shown in Figure~\ref{fig:best-bounds}, for small values of $k$ Bound~\eqref{eq:ub-C5Pn-linear} dominates uniformly across all admissible $n$. 
As $k$ increases, transition regions appear where the optimal bound changes 
depending on the value of $n$. 
In particular, Bound~\eqref{U} becomes optimal for smaller values of $n$ in certain 
intermediate ranges of $k$, while Bound~\eqref{eq:ub-C5Pn-linear} remains superior for larger $n$. 
Bound~\eqref{B} is optimal only in restricted low-$n$ configurations.

\smallskip
\noindent
More precisely, 
Bounds \eqref{U} and \eqref{B} differ in the terms
\[
\left\lceil\frac{k+4}{5}\right\rceil
\quad \text{and} \quad
\left\lceil\frac{k+5}{5}\right\rceil,
\]
together with the additional subtraction of $n$ in \eqref{B}.
The two ceiling expressions differ by exactly one whenever
$k\equiv 1 \pmod 5$, since in this case $(k+4)/5$ is an integer,
whereas $(k+5)/5$ exceeds it by $1/5$, and its ceiling therefore increases by one.
Hence, for $k\equiv 1 \pmod 5$, the linear coefficient of $n$ in Bound \eqref{B}
is larger by $5$ than in \eqref{U},
which makes Bound \eqref{B} strictly larger for sufficiently large $n$.
Consequently, in this residue class Bound \eqref{U}
is better than Bound \eqref{B}.

\begin{figure}[h!]
\centering
\begin{tikzpicture}[
x=0.50cm,
y=0.40cm,
every node/.style={font=\small}
]

\def\nmin{4}
\def\nmax{20}
\def\kmin{1}
\def\kmax{22}

\def\gapshift{5}
\def\gapsize{0.4}

\foreach \n in {\nmin,...,\nmax} {

  \foreach \k in {1,2} {

    \pgfmathtruncatemacro{\vone}{\n*(\k+1) + 2*\k}

    \pgfmathtruncatemacro{\A}{ceil((\k+4)/5)}
    \pgfmathtruncatemacro{\Btwo}{ceil((\k+3-\A)/3)}
    \pgfmathtruncatemacro{\vtwo}{5*(\n-2)*\A + 10*\Btwo}

    \pgfmathtruncatemacro{\Athree}{ceil((\k+5)/5)}
    \pgfmathtruncatemacro{\Bthree}{ceil((\k+3-\Athree)/3)}
    \pgfmathtruncatemacro{\vthree}{5*(\n-2)*\Athree + 10*\Bthree - \n}

    \def\pat{north east lines}
    \pgfmathtruncatemacro{\best}{\vone}

    \ifnum\vtwo<\best
      \def\pat{vertical lines}
      \pgfmathtruncatemacro{\best}{\vtwo}
    \fi

    \ifnum\vthree<\best
      \def\pat{crosshatch}
      \pgfmathtruncatemacro{\best}{\vthree}
    \fi

    \pgfmathsetmacro{\x}{\n-\nmin}
    \pgfmathsetmacro{\y}{\k-\kmin}

    \fill[pattern=\pat] (\x,\y) rectangle ++(1,1);
    \draw (\x,\y) rectangle ++(1,1);
  }

  \foreach \k in {8,...,22} {

    \pgfmathtruncatemacro{\vone}{\n*(\k+1) + 2*\k}

    \pgfmathtruncatemacro{\A}{ceil((\k+4)/5)}
    \pgfmathtruncatemacro{\Btwo}{ceil((\k+3-\A)/3)}
    \pgfmathtruncatemacro{\vtwo}{5*(\n-2)*\A + 10*\Btwo}

    \pgfmathtruncatemacro{\Athree}{ceil((\k+5)/5)}
    \pgfmathtruncatemacro{\Bthree}{ceil((\k+3-\Athree)/3)}
    \pgfmathtruncatemacro{\vthree}{5*(\n-2)*\Athree + 10*\Bthree - \n}

    \def\pat{north east lines}
    \pgfmathtruncatemacro{\best}{\vone}

    \ifnum\vtwo<\best
      \def\pat{vertical lines}
      \pgfmathtruncatemacro{\best}{\vtwo}
    \fi

    \ifnum\vthree<\best
      \def\pat{crosshatch}
      \pgfmathtruncatemacro{\best}{\vthree}
    \fi

    \pgfmathsetmacro{\x}{\n-\nmin}
    \pgfmathsetmacro{\y}{\k-\kmin-\gapshift+\gapsize}

    \fill[pattern=\pat] (\x,\y) rectangle ++(1,1);
    \draw (\x,\y) rectangle ++(1,1);
  }
}

\draw[->] (0,0) -- ({(\nmax-\nmin+1)+0.8},0) node[below] {$n$};

\pgfmathsetmacro{\ymax}{(\kmax-\kmin-\gapshift+\gapsize)+1.8}
\draw[->] (0,0) -- (0,\ymax) node[left] {$k$};

\foreach \n in {\nmin,...,\nmax}{
  \pgfmathsetmacro{\x}{(\n-\nmin)+0.5}
  \draw (\x,0) -- (\x,-0.18);
  \node[below] at (\x,-0.18) {\scriptsize \n};
}

\foreach \k in {1,2}{
  \pgfmathsetmacro{\yt}{(\k-\kmin)+0.5}
  \draw (0,\yt) -- (-0.18,\yt);
  \node[left] at (-0.18,\yt) {\scriptsize \k};
}

\foreach \k in {8,...,22}{
  \pgfmathsetmacro{\yt}{(\k-\kmin-\gapshift+\gapsize)+0.5}
  \draw (0,\yt) -- (-0.18,\yt);
  \node[left] at (-0.18,\yt) {\scriptsize \k};
}

\end{tikzpicture}

\vspace{2mm}

\begin{tikzpicture}[x=0.8cm,y=0.5cm]
\fill[pattern=north east lines] (0,0) rectangle (1,1);
\draw (0,0) rectangle (1,1);
\node[right] at (1.2,0.5) {Bound \eqref{eq:ub-C5Pn-linear}};

\fill[pattern=vertical lines] (5,0) rectangle (6,1);
\draw (5,0) rectangle (6,1);
\node[right] at (6.2,0.5) {Bound \eqref{U}};

\fill[pattern=crosshatch] (10,0) rectangle (11,1);
\draw (10,0) rectangle (11,1);
\node[right] at (11.2,0.5) {Bound \eqref{B}};
\end{tikzpicture}

\caption{Best bound among Bounds \eqref{eq:ub-C5Pn-linear}, \eqref{U}, and \eqref{B}
for $4\le n\le20$ and $1\le k\le22$.}
\label{fig:best-bounds}
\end{figure}
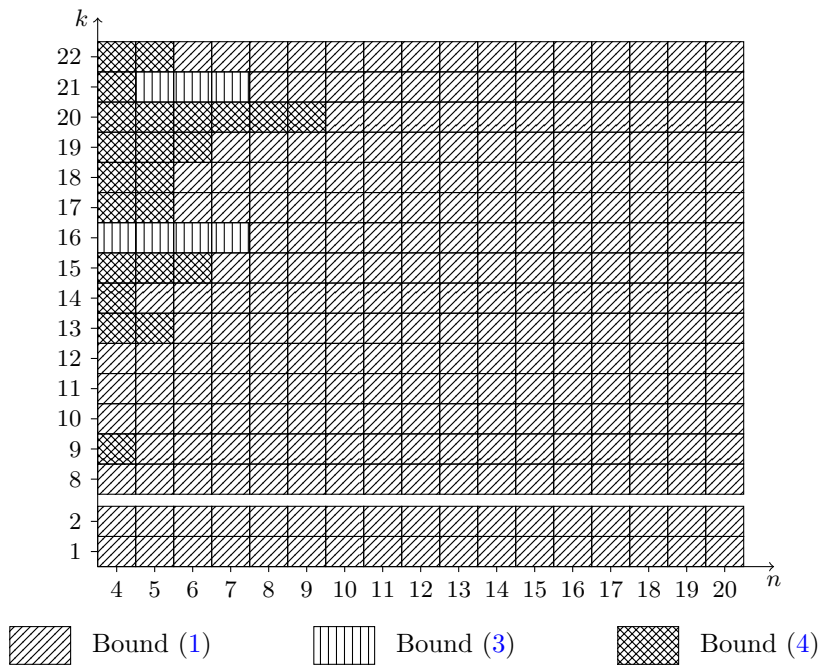

\noindent
We next illustrate the behavior of the three bounds for larger values of $k$.
As $k$ increases, the region in which Bounds \eqref{U} and \eqref{B}
provide better values than Bound \eqref{eq:ub-C5Pn-linear} expands toward larger values of $n$.
In particular, for fixed small $n$, Bounds \eqref{U} and especially
Bound \eqref{B} become optimal for a wider range of $k$.
This phenomenon is clearly visible in the following diagram,
where the dominance regions of Bounds \eqref{U} and \eqref{B}
grow as $k$ increases.

\smallskip
\noindent
While Bounds \eqref{U} and \eqref{B} may provide improvements
for small values of $n$, the linear Bound \eqref{eq:ub-C5Pn-linear}
turns out to be asymptotically optimal.
More precisely, it eventually dominates the other two bounds
as $n$ grows.

\begin{theorem}\label{thm:bound1-eventually-optimal}
For every integer $k\ge 2$ there exists $N(k)\in\mathbb{N}$ such that for all
$n\ge N(k)$ Bound \eqref{eq:ub-C5Pn-linear} is better than Bounds
\eqref{U} and \eqref{B}.
\end{theorem}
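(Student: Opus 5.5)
The plan is to compare the coefficients of $n$ in the three bounds, since each bound is an affine function of $n$ for fixed $k$. Write $A=\left\lceil\frac{k+4}{5}\right\rceil$ and $A'=\left\lceil\frac{k+5}{5}\right\rceil$. Bound \eqref{eq:ub-C5Pn-linear} equals $(k+1)n+2k$, so its slope is $k+1$. Bound \eqref{U} equals $5An+c_U(k)$, where $c_U(k)=-10A+10\left\lceil\frac{k+3-A}{3}\right\rceil$ does not depend on $n$. Bound \eqref{B} equals $(5A'-1)n+c_B(k)$, with $c_B(k)$ likewise independent of $n$. It therefore suffices to show that, for every $k\ge 2$, the slope $k+1$ is strictly smaller than both $5A$ and $5A'-1$. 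Once that is established, each difference (Bound \eqref{U} minus Bound \eqref{eq:ub-C5Pn-linear}, and Bound \eqref{B} minus Bound \eqref{eq:ub-C5Pn-linear}) is an affine function of $n$ with positive slope. Such a function is positive for all $n$ beyond an explicit threshold, and $N(k)$ is taken as the maximum of the two thresholds and $4$.

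For Bound \eqref{U}, we have $5A\ge k+4>k+1$ for all $k$, so its slope exceeds $k+1$ by at least $3$.

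For Bound \eqref{B}, we have $5A'\ge k+5$, hence $5A'-1\ge k+4>k+1$. So this slope exceeds $k+1$ by at least $3$ as well.

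To make $N(k)$ explicit, bound the constants using $\lceil x\rceil\le x+1$ and $\lceil x\rceil\ge x$, as in the proofs of Theorems \ref{C5Cn} and \ref{C5Pn_2}. This gives $c_U(k)\ge -10\left(\frac{k+4}{5}+1\right)+\frac{10}{3}\left(k+3-\frac{k+4}{5}-1\right)$, which is linear in $k$, and a similar lower bound holds for $c_B(k)$. Since the slope gap is at least $3$, the inequality $3n> 2k-c_U(k)$ (respectively $3n>2k-c_B(k)$) is enough. Hence a choice such as $N(k)=\max\{4,\,\lfloor(2k-\min(c_U,c_B))/3\rfloor+1\}$ works, and it is linear in $k$.

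There is no real obstacle here; the only care needed is in the sign bookkeeping of the ceiling estimates for the constant terms. The theorem claims the result only for $k\ge2$, but the argument also covers $k=1$, because the slope comparison does not depend on the value of $k$.
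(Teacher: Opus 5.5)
Your proposal is correct and follows the paper's proof essentially verbatim. Both arguments write Bounds \eqref{U} and \eqref{B} as affine functions of $n$ with slopes $5\lceil (k+4)/5\rceil\ge k+4$ and $5\lceil (k+5)/5\rceil-1\ge k+4$, each strictly larger than the slope $k+1$ of Bound \eqref{eq:ub-C5Pn-linear}; your explicit $N(k)$ (via the slope gap of at least $3$) and your remark that $k=1$ is also covered are valid additions not made in the paper.
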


\begin{proof}
For every fixed $k$, the right-hand sides of Bounds \eqref{U} and \eqref{B}
are functions of $n$.

Let
\[
a=\left\lceil\frac{k+4}{5}\right\rceil,
\qquad
b=\left\lceil\frac{k+3-a}{3}\right\rceil.
\]
Then the right-hand side of Bound \eqref{U} can be written as
\[
5(n-2)a+10b
=
(5a)\,n + (-10a+10b),
\]
so its slope equals $5a$. Since $5a\ge k+4$, we have $5a>k+1$,
which is the slope of Bound \eqref{eq:ub-C5Pn-linear}. Hence, the difference between Bounds
 \eqref{eq:ub-C5Pn-linear} and \eqref{U} has positive slope in $n$ and therefore
becomes positive for sufficiently large $n$.
Thus, Bound \eqref{eq:ub-C5Pn-linear} eventually improves upon Bound \eqref{U}.

\smallskip
\noindent
Similarly, let
\[
a'=\left\lceil\frac{k+5}{5}\right\rceil,
\qquad
b'=\left\lceil\frac{k+3-a'}{3}\right\rceil.
\]
Then the right-hand side of Bound \eqref{B} equals
\[
5(n-2)a' + 10b' - n
=
(5a'-1)\,n + (-10a' + 10b'),
\]
whose slope is $5a'-1$. Since $5a'\ge k+5$, we obtain
$5a'-1>k+1$. Therefore, the difference between Bounds
\eqref{eq:ub-C5Pn-linear} and \eqref{B} again has positive slope in $n$ and
becomes positive for sufficiently large $n$.
Thus, Bound \eqref{eq:ub-C5Pn-linear} eventually improves upon Bound \eqref{B}.

\smallskip
\noindent
Combining the two comparisons completes the proof.
\end{proof}

\smallskip
\noindent
Consequently, for every $k$ and all sufficiently large $n$,
Bound \eqref{eq:ub-C5Pn-linear} is the optimal bound among
Bounds \eqref{eq:ub-C5Pn-linear}, \eqref{U}, and \eqref{B}.

\subsection{Upper bounds for $m=6$}

\smallskip
\noindent
In the following, we consider the case when the cycle length is divisible by $6$.
We first present an explicit construction for the base case $m=6$, which already
captures the essential periodic behavior along the path direction.

{\begin{theorem}\label{thm:m6}
Let $n\ge 2$. Then
\begin{equation}\label{dve}
\gamma_{[k]R}(C_6\Box P_n)\ \le\
\left\lceil \frac{4n}{3}\right\rceil(k+1)
\;+\;
\begin{cases}
k+1, & \text{if } n\equiv 0 \pmod{3},\\[4pt]
0,   & \text{if } n\equiv 1 \pmod{3},\\[4pt]
k,   & \text{if } n\equiv 2 \pmod{3}.
\end{cases}
\end{equation}
\end{theorem}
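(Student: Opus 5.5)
The plan is to copy the strategy of the proof of Theorem~\ref{thm:m5}. I would build an explicit labeling $f$ with values in $\{0,k,k+1\}$ that is periodic along the $P_n$-direction with period $3$, and then correct the last one or two fibres according to $n \bmod 3$.

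\emph{Main term.} The term $\lceil 4n/3\rceil(k+1)$ suggests a block of three consecutive fibres carrying exactly four vertices of label $k+1$, with all other labels $0$. Since $C_6$ has six vertices per fibre, each block has $18$ vertices. The four closed neighborhoods of the $(k+1)$-vertices cover at most $20$ vertex slots, so almost every $0$-vertex should have exactly one neighbor labeled $k+1$. For such a vertex $v$ we get $f(N[v]) = k+1 = k+|AN(v)|$, which is exactly the $[k]$-Roman condition.

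\emph{Choosing the pattern.} My first attempt is a column profile $(2,1,1)$: positions $\{0,3\}$ in the first fibre, then single entries in the other two fibres, shifted by a fixed step modulo $6$, in the spirit of Pattern~\eqref{pat:m0-period}. A quick count shows that a middle fibre with a single entry cannot be covered by one $(k+1)$-neighbor alone. So the pattern must exploit the other way of satisfying the condition. A $0$-vertex with two positive neighbors $u,w$ only needs $f(u)+f(w)\ge k+2$, and this holds whenever both labels are at least $k$ (for $k\ge 2$). Vertices left uncovered by the $(k+1)$-entries must therefore see two positive neighbors, possibly in adjacent fibres.

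I would search the small space of $6\times 3$ blocks with four nonzero entries (up to the rotation and reflection symmetry of $C_6$) for one where every $0$-vertex has either a neighbor labeled $k+1$ or at least two positive neighbors. I would also check the wrap-around between consecutive blocks. Finding this block is the step I expect to be the main obstacle, and I would do it by hand-checking the $18$ vertices of the block together with its two neighboring fibres.

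\emph{Boundary corrections.} The end fibres $F_0$ and $F_{n-1}$ have degree-$3$ vertices, which lose one potential neighbor. I would treat them as in Theorem~\ref{thm:m5}, placing one extra label of value $k$ or $k+1$ next to each uncovered boundary vertex. The block is truncated according to $n \bmod 3$:
\begin{itemize}
\item for $n\equiv 1 \pmod 3$, I would arrange the truncation so that the final fibre is a $2$-entry fibre, which covers its own fibre completely; this gives no extra cost;
\item for $n\equiv 2 \pmod 3$, one vertex would remain uncovered, and I would fix it with a single label $k$ adjacent to it, using the two-positive-neighbor rule;
\item for $n\equiv 0 \pmod 3$, I would add one extra label $k+1$.
\end{itemize}

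Finally I would sum the weights over the $\lceil 4n/3\rceil$ entries of value $k+1$ plus the correction term, and verify the small cases $n=2,3,4$ explicitly by exhibiting the matrices, as was done for $C_5\Box P_3$ and $C_5\Box P_4$.
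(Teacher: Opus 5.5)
Your accounting (four labels $k+1$ per three fibres, plus a correction depending on $n \bmod 3$) matches the paper. However, the block you plan to search for does not exist, so the core of the proof is missing.

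First, the two-positive-neighbor rule cannot rescue anything here. A vertex with $f(v)=0$ and $AN(v)=\emptyset$ has $f(N[v])=0<k$. So whatever values in $\{k,k+1\}$ you use, the positive vertices must form a dominating set. The vertices left uncovered in your first attempt with positions $\{0,3\}$ have no positive neighbor at all.

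Second, no labeling of period $3$ with four positive vertices per three fibres is dominating. With period $3$, each fibre of a block is adjacent to the other two fibres of the block. An empty fibre would be dominated in at most four rows, so the profile must be $(2,1,1)$, say $A=\{a_1,a_2\}$, $B=\{b\}$, $C=\{c\}$. Dominating $B$ forces $\{b-1,b,b+1\}$, $\{a_1,a_2\}$ and $\{c\}$ to partition $\mathbb{Z}_6$, i.e.\ $\{a_1,a_2,c\}=\{b+2,b+3,b+4\}$. Dominating $C$ likewise forces $\{a_1,a_2,b\}=\{c+2,c+3,c+4\}$. Taking $b=0$, each of $c=2,3,4$ gives a contradiction. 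So the search over translation-repeated $6\times 3$ blocks, which you flag as the main obstacle, comes up empty for every $k$.

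The missing idea is that the pattern must have period $6$. The two positive vertices of a $2$-entry fibre sit at cycle distance $2$, not $3$; the one row of that fibre they leave undominated is covered by an adjacent $1$-entry fibre. The second block of three fibres is the mirror image $r\mapsto 1-r$ of the first. This way the third fibre of each block borders a different $2$-entry fibre from the one bordering the second fibre, which removes the contradiction above. This is exactly Pattern~\eqref{pat:m6-period}, with fibres $\{0,4\},\{2\},\{5\},\{1,3\},\{5\},\{2\}$.

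Your boundary plan also needs two adjustments.
\begin{itemize}
\item For $n\equiv 1 \pmod 3$, the final $2$-entry fibre does not dominate itself. Its uncovered row is dominated by the preceding fibre, so there is still no extra cost.
\item For $n\equiv 2 \pmod 3$, the label $k$ must be placed \emph{on} the single undominated vertex of the last fibre, not next to it. A vertex whose only positive neighbor carries $k$ has $f(N[v])=k<k+1$. Alternatively, as in the paper, redefine the last fibre with one $k+1$ and one $k$; the weight is the same.
\end{itemize}
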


\noindent
\begin{proof}
We again define a labeling $f$ that is periodic along the $P_n$-direction.
On the interior fibres we use a fixed periodic pattern, given by
\begin{equation}\label{pat:m6-period}
\begin{pmatrix}
k+1 & 0   & 0   & 0   & 0   & 0   &|& k+1 & \cdots\\
0   & 0   & 0   & k+1 & 0   & 0   &|& 0   & \cdots\\
0   & k+1 & 0   & 0   & 0   & k+1 &|& 0   & \cdots\\
0   & 0   & 0   & k+1 & 0   & 0   &|& 0   & \cdots\\
k+1 & 0   & 0   & 0   & 0   & 0   &|& k+1 & \cdots\\
0   & 0   & k+1 & 0   & k+1 & 0   &|& 0   & \cdots
\end{pmatrix}.
\end{equation}
After six fibres the same row positions reappear, hence the pattern is
explicitly periodic.

\smallskip
\noindent
For $n\ge 2$, we apply Pattern \eqref{pat:m6-period} to the fibres
$F_0,\dots,F_{n-2}$.
This contributes weight
\[\Bigl\lceil \frac{4n}{3}\Bigr\rceil (k+1).
\]
It remains to define the boundary fibre $F_{n-1}$.
We obtain $F_{n-1}$ by a local correction,
depending only on $n\pmod 3$, as follows.

\smallskip
\noindent
If $n\equiv 0\pmod 3$, then set
\[
f(2,n-1)=k+1
\qquad\text{and}\qquad
f(5,n-1)=k+1,
\]
and label all other vertices of $F_{n-1}$ by $0$.
This adds $(k+1)$ to the total sum of inner weights.

\smallskip
\noindent
If $n\equiv 1\pmod 3$, then no correction is needed on the last fibre.

\smallskip
\noindent
If $n\equiv 2\pmod 3$, then let $j\in\{0,1,\dots,5\}$ be such index that
$f(j,n-2)=k+1$ and $f(j+2,n-2)=k+1$ (indices taken modulo $6$).
Set
\[
f(j+1,n-1)=k
\qquad\text{and}\qquad
f(j-2,n-1)=k+1,
\]
and label all other vertices of $F_{n-1}$ by $0$.
This adds $k$ to the total sum of inner weights.

\smallskip
\noindent
By construction, every vertex labeled $0$ satisfies the $[k]$-Roman domination
condition $f(N[v])\ge k+|AN(v)|$. Hence $f$ is a valid $[k]$-RDF.
The total weight is therefore at most
\[
\Bigl\lceil \frac{4n}{3}\Bigr\rceil (k+1)
+
\begin{cases}
k+1, & \text{if } n\equiv 0 \pmod{3},\\
0,   & \text{if } n\equiv 1 \pmod{3},\\
k,   & \text{if } n\equiv 2 \pmod{3},
\end{cases}
\]
which is exactly \eqref{dve}.
\end{proof}
}

\smallskip
\noindent
Next, we present an alternative construction.

\begin{theorem}\label{C} For $n\ge 4$,
\begin{equation}\label{U_2}
\gamma_{[k]R}(C_6 \Box P_n)
\;\le\;
6 (n-2)\left\lceil\frac{k+4}{5}\right\rceil
\;+\;
12\left\lceil
\frac{k+3-\left\lceil\frac{k+4}{5}\right\rceil}{3}
\right\rceil\le\;
\frac{6nk+54n+4k-4}{5}.
\end{equation}
\end{theorem}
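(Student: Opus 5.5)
The plan is to copy the uniform-fibre construction from the proof of Theorem~\ref{C5Cn}, now on the cycle $C_6$. Set
\[
a=\left\lceil\frac{k+4}{5}\right\rceil,
\qquad
b=\left\lceil\frac{k+3-a}{3}\right\rceil .
\]
Define $f(v)=a$ for every vertex $v$ of an interior fibre $F_1,\dots,F_{n-2}$. Define $f(v)=b$ for every vertex of the two boundary fibres $F_0$ and $F_{n-1}$.

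I will check first that both labels are positive. We have $a\ge 1$. Also $a\le (k+8)/5<k+3$, so $k+3-a>0$ and hence $b\ge 1$. Consequently every neighbour of every vertex has positive weight, so $|AN(v)|=\deg(v)$ for all $v$. The condition $f(N[v])\ge k+|AN(v)|$ then becomes $f(N[v])\ge k+\deg(v)$. It only needs checking when $f(v)<k$, but I will verify it for all $v$ to avoid case distinctions. This uses $n\ge 4$ (in fact $n\ge 3$ suffices), so that a boundary fibre is adjacent only to an interior fibre.

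The verification splits into two cases.
\begin{itemize}
\item An interior vertex has degree $4$, and all five vertices of $N[v]$ carry weight $a$ (a neighbour in $F_0$ or $F_{n-1}$ would carry $b$; I handle this by noting that $F_1$'s outer neighbour lies in $F_0$ and treat that case separately). So $f(N[v])=5a\ge k+4$ whenever all neighbours are interior.
\item A boundary vertex has degree $3$: two neighbours on the cycle, both with weight $b$, and one neighbour in $F_1$ or $F_{n-2}$, with weight $a$. So $f(N[v])=3b+a\ge k+3$, by the choice of $b$.
\end{itemize}
The slightly delicate case is a vertex of $F_1$ or $F_{n-2}$. Its closed neighbourhood contains one boundary vertex of weight $b$ instead of $a$, giving $f(N[v])=4a+b$. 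Here I would mirror the paper's argument in Theorem~\ref{C5Cn}, which counts $f(N[v])\ge k+4$ there. If I cannot get $4a+b\ge k+4$ directly, I would observe that $b\ge a$ for all $k$ except small values, and check those small $k$ by hand. I expect this to be the main (if minor) obstacle.

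The weight of $f$ is $6(n-2)a+12b$, which is the first bound in \eqref{U_2}. For the closed-form estimate I use $\lceil x\rceil\le x+1$ on the outer ceilings, and $a\ge (k+4)/5$ inside $b$, since $a$ enters $b$ with a minus sign. This gives
\[
a\le \frac{k+9}{5},
\qquad
b\le \frac{k+3-\frac{k+4}{5}}{3}+1=\frac{4k+26}{15}.
\]
Substituting,
\[
6(n-2)\frac{k+9}{5}+12\cdot\frac{4k+26}{15}
=\frac{6nk+54n-12k-108}{5}+\frac{16k+104}{5}
=\frac{6nk+54n+4k-4}{5},
\]
which is the second inequality in \eqref{U_2}.
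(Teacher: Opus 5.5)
Your proposal is correct and follows the paper's approach. The paper omits this proof as analogous to Theorem~\ref{C5Cn}, which uses the same uniform labelling ($a$ on interior fibres, $b$ on the two boundary fibres) and the same ceiling estimates.

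The case you flag, vertices of $F_1$ and $F_{n-2}$, closes directly with no small-$k$ check: $4a+b\ge 4a+\frac{k+3-a}{3}=\frac{11a+k+3}{3}\ge k+4$, because $11a\ge \frac{11(k+4)}{5}\ge 2k+9$ for all $k\ge 1$. The paper glosses over this case by asserting $f(N[v])\ge 5a$ for every interior vertex. That is false, for example, at $k=2$, where $a=2$ and $b=1$.
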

\noindent
\begin{proof}
The proof is analogous to the proof of Theorem \ref{C5Cn} and is therefore omitted.
\end{proof}

\smallskip
\noindent
Again, the construction can be exploited to
reduce the total weight. We first state the following lemma.

\begin{lemma}\label{lem:packing-C6Pn}
The packing number of $C_6\Box P_n$ is
\[
\rho\left(C_6\Box P_n\right)=
\begin{cases}
n, & \text{if $n$ is even},\\[1mm]
n+1, & \text{if $n$ is odd}.
\end{cases}
\]
\end{lemma}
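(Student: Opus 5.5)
The proof has two parts: an upper bound on the size of any packing, and an explicit packing that attains it.

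\emph{Upper bound.} Each fibre $F_i$ is a $6$-cycle. Any two vertices of the same fibre are at distance at most $3$ within the fibre, and a packing requires pairwise distance at least $3$. Hence a packing $S$ contains at most two vertices of each fibre, and if it contains two, they are antipodal, $(j,i)$ and $(j+3,i)$.

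The key local claim concerns consecutive fibres: $|S\cap (F_i\cup F_{i+1})|\le 2$. Suppose $F_i$ contains an antipodal pair $(j,i),(j+3,i)$. Their closed neighborhoods cover the rows $j$ and $j+3$ of $F_{i+1}$. Every vertex of $F_{i+1}$ lies within distance $2$ of $(j,i)$ or of $(j+3,i)$: go one step along the path to row $j$ or $j+3$, then at most one step along the cycle. So $F_{i+1}$ contains no vertex of $S$.

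Suppose instead that $F_i$ and $F_{i+1}$ each contain exactly one vertex, $(a,i)$ and $(b,i+1)$. Packing requires $d=|a-b|_{6}+1\ge 3$. A third vertex in $F_{i+1}$ would have to be antipodal to $(b,i+1)$, which is excluded by the previous case applied to $F_{i+1}$ and $F_i$. So at most two vertices lie in any two consecutive fibres.

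Summing over the pairs $(F_0,F_1),(F_2,F_3),\dots$ gives $|S|\le n$ for even $n$. For odd $n$, the last fibre is unpaired and contributes at most $2$, so $|S|\le n-1+2=n+1$.

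\emph{Lower bound (constructions).}
\begin{itemize}
\item For even $n$, place one vertex per fibre with row shift $3$: rows $0,3,0,3,\dots$. Consecutive chosen vertices are at distance $1+3=4$, and vertices two fibres apart are at distance $2+0=2$, which fails. So this choice does not work. Instead, use the even-$n$ pattern of two antipodal vertices in $F_0$, nothing in $F_1$, two antipodal vertices in $F_2$ shifted by one row (rows $1,4$), and so on. Vertices in $F_0$ and $F_2$ are then at distance $2+1=3$. This gives $2\cdot\lceil n/2\rceil$ vertices, which equals $n$ for even $n$ and $n+1$ for odd $n$. Check that fibres $i$ and $i+2$ with shifted pairs are at distance $3$, and that fibres $i$ and $i+4$ are farther apart.
\item For odd $n$, the same pattern uses the fibres $F_0,F_2,\dots,F_{n-1}$, giving $(n+1)/2$ pairs and $n+1$ vertices in total, matching the upper bound.
\end{itemize}

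The main obstacle is the consecutive-fibre claim in the upper bound, specifically the case analysis that rules out three vertices in two adjacent fibres. I would handle it by a direct distance computation in $C_6\Box P_2$, where the distance is $|\Delta i|+\min(|\Delta j|,6-|\Delta j|)$.
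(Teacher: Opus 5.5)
Your proof is correct and follows the same route as the paper. The ingredients match: at most two (necessarily antipodal) vertices per fibre, a doubly occupied fibre forcing its neighbouring fibres to be empty, and antipodal pairs placed in every second fibre with a one-row shift. Your bound $|S\cap(F_i\cup F_{i+1})|\le 2$, summed over consecutive pairs to get $|S|\le n$ or $n+1$, spells out the counting step that the paper leaves implicit. Just delete the abandoned row-shift-$3$ attempt from the write-up.
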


\noindent
\begin{proof}
Let $S$ be a packing set in $C_6\Box P_n$, and let $F_i$ denote the $i$-th fibre.

\smallskip
\noindent
In any fibre $F_i$, closed neighborhoods of two selected vertices are disjoint
only if the vertices are at distance $3$ on the cycle. Hence, $|S\cap F_i|\le 2$ for every $i$.

\smallskip
\noindent
If $|S\cap F_i|=2$, then $S\cap F_{i-1}=S\cap F_{i+1}=\emptyset$. 

\smallskip
\noindent
To see that these bounds are attained, choose two vertices in every second fibre
(at distance $3$ in the $C_6$-direction) and leave the fibres in between empty,
shifting the chosen pair alternately by one position up and down, as illustrated in
Figure~\ref{fig:pattern-C6Pn}. This produces a packing of size $2\lfloor n/2\rfloor=n$
when $n$ is even. If $n$ is odd, the same periodic pattern ends with one additional
selected fibre, yielding $2\lceil n/2\rceil=n+1$ chosen vertices.
\end{proof}

\begin{figure}[ht!]
\centering
\begin{tikzpicture}[scale=0.75]

\foreach \i in {1,...,6}{
  \foreach \r in {0,1,2,3,4,5}{
    \draw (\i,-\r) rectangle (\i+1,-\r-1);
  }
}

\foreach \i in {0,...,5}{
  \node at (\i+1.5,0.6) {\small $F_{\i}$};
}

\foreach \r in {0,1,2,3,4,5}{
  \node[anchor=east] at (0.9,-\r-0.5) {\small $\r$};
}

\node at (1.5,-0.5) {\Large $\ast$};
\node at (1.5,-3.5) {\Large $\ast$};

\node at (3.5,-1.5) {\Large $\ast$};
\node at (3.5,-4.5) {\Large $\ast$};

\node at (5.5,-0.5) {\Large $\ast$};
\node at (5.5,-3.5) {\Large $\ast$};

\node at (8.2,-3) {\Large $\cdots$};

\end{tikzpicture}
\caption{A periodic packing pattern in $C_6\Box P_n$.
In every second fibre we choose two vertices at distance $3$ in the $C_6$-direction,
and alternate the pair by a shift of $1$.}
\label{fig:pattern-C6Pn}
\end{figure}

\smallskip
\noindent
We now obtain the next result.

\begin{theorem}\label{C6Pn_2}
For $n\ge 4$,
\begin{equation}\label{B_2}
\gamma_{[k]R}(C_6\Box P_n)
\le
6 (n-2) \left\lceil\frac{k+5}{5}\right\rceil
+
12\left\lceil
\frac{k+3-\left\lceil\frac{k+5}{5}\right\rceil}{3}
\right\rceil
-
2\left\lceil\frac{n}{2}\right\rceil.
\end{equation}

\noindent
Moreover,
\begin{equation}\label{B-noceil}
\gamma_{[k]R}(C_6\Box P_n)
\ \le\
\begin{cases}
\displaystyle
\frac{6nk+55n+4k-20}{5},
& \text{if $n$ is even}, \\[10pt]
\displaystyle
\frac{6nk+55n+4k-25}{5},
& \text{if $n$ is odd}.
\end{cases}
\end{equation}
\end{theorem}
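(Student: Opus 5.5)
The plan is to follow the proof of Theorem~\ref{C5Pn_2}, with Lemma~\ref{lem:packing-C6Pn} replacing Lemma~\ref{lem:packing-C5Pn}.

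\textbf{Base labeling.} Set $a'=\left\lceil\frac{k+5}{5}\right\rceil$ and $b'=\left\lceil\frac{k+3-a'}{3}\right\rceil$. Assign $a'$ to every vertex of the interior fibres $F_1,\dots,F_{n-2}$ and $b'$ to every vertex of the boundary fibres $F_0,F_{n-1}$. This labeling has weight $6(n-2)a'+12b'$, and it has one unit of slack at every vertex.
\begin{itemize}
\item An interior vertex $v$ has five vertices in $N[v]$, all labeled $a'$, so $f(N[v])=5a'\ge k+5=k+|AN(v)|+1$.
\item A boundary vertex $v$ has three neighbours of weight $b'$ (two in its own fibre) plus one neighbour of weight $a'$. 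Hence $f(N[v])=3b'+a'\ge (k+3-a')+a'=k+3$. Since $|AN(v)|=3$, this is only the exact requirement $k+|AN(v)|$, with no spare unit.
\end{itemize}
If $b'$ or $a'$ already reaches $k$, the condition is vacuous for that vertex; otherwise the inequalities above are used. This matches the paper's claim in Theorem~\ref{C5Pn_2} only up to this boundary subtlety, which I treat below.

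\textbf{Packing reduction.} Take the periodic packing $S$ of Lemma~\ref{lem:packing-C6Pn}, which has $2\lceil n/2\rceil$ vertices, and decrease the label of each vertex of $S$ by one. The closed neighbourhoods of vertices in $S$ are pairwise disjoint, so each $N[v]$ loses at most one unit. Two points need checking:
\begin{itemize}
\item $|AN(v)|$ does not increase. Since $a'\ge2$, a decrement never turns a positive label into $0$ in the interior, so interior vertices keep their slack.
\item Boundary fibres need care, because they have no slack. The construction in Lemma~\ref{lem:packing-C6Pn} selects the fibres $F_0,F_2,\dots$, so for odd $n$ it uses both $F_0$ and $F_{n-1}$.
\end{itemize}

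\textbf{Main obstacle: the boundary.} For vertices in or adjacent to a boundary fibre, I would first check whether the ceiling in $b'$ gives a surplus, namely $3b'+a'\ge k+4$. If it does not, I would modify the packing: shift it by one fibre, or place the selected pairs in $F_1$ and $F_{n-2}$, so that no selected closed neighbourhood meets a tight boundary vertex. I would then count that $2\lceil n/2\rceil$ vertices can still be chosen. For $n\ge4$ this should be possible by starting the alternating pattern at $F_1$. If it cannot be done, I would fall back to raising the boundary value by one where needed. I expect this bookkeeping to be the delicate step, and I would treat the boundary exactly as the paper does in Theorem~\ref{C5Pn_2}.

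\textbf{Ceiling-free estimate.} Use $a'\le\frac{k+5}{5}+1=\frac{k+10}{5}$ and
$$b'\le\frac{k+3-\frac{k+5}{5}}{3}+1=\frac{4k+25}{15}.$$
Substituting gives
$$6(n-2)\frac{k+10}{5}+12\cdot\frac{4k+25}{15}=\frac{6nk+60n+4k-20}{5}.$$
Then subtract $2\lceil n/2\rceil$, which is $n$ for even $n$ and $n+1$ for odd $n$. This yields $\frac{6nk+55n+4k-20}{5}$ and $\frac{6nk+55n+4k-25}{5}$ respectively, which is \eqref{B-noceil}.
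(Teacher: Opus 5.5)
You follow the paper's route: uniform labels $a'$ on the interior fibres and $b'$ on the boundary fibres, then a unit decrement on the packing of Lemma~\ref{lem:packing-C6Pn}. Your ceiling-free computation is correct. Your concern about the boundary is justified, and it is a genuine gap rather than bookkeeping. The paper's proof, which refers back to that of Theorem~\ref{C5Pn_2}, simply asserts $f(N[v])\ge k+|AN(v)|+1$ for every $v$. For a boundary vertex, however, $3b'+a'\ge k+4$ holds only when $3\nmid k+3-a'$. For example, for $k=5$ you get $a'=b'=2$ and $3b'+a'=8=k+3$. The paper's packing places a pair $(r,0),(r+3,0)$ in $F_0$, and their closed neighbourhoods cover all of $F_0$. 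After the decrement, every vertex of $F_0$ therefore has $f(N[v])=7<k+|AN(v)|=8$. The lowered labels stay positive, so $|AN(v)|$ does not drop.

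Your proposed repairs do not close this gap when $3\mid k+3-a'$.
\begin{itemize}
\item Starting the pattern at $F_1$ fails, because every vertex of $F_1$ has a neighbour in $F_0$, and that neighbour loses a unit it cannot spare.
\item A packing whose closed neighbourhoods avoid $F_0\cup F_{n-1}$ must lie in $F_2\cup\dots\cup F_{n-3}$. By the pair-of-fibres argument of Lemma~\ref{lem:packing-C6Pn}, it then has at most $2\lceil n/2\rceil-4$ vertices.
\item Raising labels \emph{where needed} gives back what was saved. A pair in $F_0$ leaves six deficits in $F_0$, and one raised unit repairs at most three $F_0$-sums. A pair in $F_1$ leaves deficits at two antipodal rows of $F_0$, and no single raised vertex repairs both.
\item Deferring to \emph{the paper's treatment} only reproduces the unjustified slack claim.
\end{itemize}
So in these residue classes your argument proves only $6(n-2)a'+12b'-2\lceil n/2\rceil+4$. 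To obtain \eqref{B_2} itself you need a genuinely different construction near the two ends, and neither your proposal nor the paper's one-line proof supplies it.

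The boundary-avoiding packing does suffice for \eqref{B-noceil}. In the tight case $b'=(k+3-a')/3\le\frac{4k+10}{15}$ exactly, so your estimate $b'\le\frac{4k+25}{15}$ leaves $12$ units of room on the two boundary fibres, more than the $4$ units lost. One more check is needed: the vertices of $F_1$ and $F_{n-2}$ see $4a'+b'$, not the $5a'$ you wrote. Since $4a'+b'\ge k+5$ for all $k\ge 1$, they still tolerate a decrement in $F_2$ and $F_{n-3}$.
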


\noindent
\begin{proof}
The proof is identical to the previous one, except that the final weight reduction equals the packing number $\rho(C_6\Box P_n)$, which by Lemma~\ref{lem:packing-C6Pn} is $n$ when $n$ is even and $n+1$ when $n$ is odd.
\end{proof}

\begin{figure}[h!]
\centering
\begin{tikzpicture}[
  x=0.50cm,
  y=0.40cm,
  every node/.style={font=\small}
]
\def\nmin{4}
\def\nmax{20}
\def\kmin{1}
\def\kmax{25}

\def\gapshift{11}  
\def\gapsize{0.4}  

\foreach \n in {\nmin,...,\nmax} {

  \foreach \k in {1,2} {

    \pgfmathtruncatemacro{\c}{ceil(4*\n/3)}
    \pgfmathtruncatemacro{\r}{mod(\n,3)}
    \pgfmathtruncatemacro{\vonebase}{\c*(\k+1)}
    \pgfmathtruncatemacro{\corr}{0}
    \ifnum\r=0 \pgfmathtruncatemacro{\corr}{\k+1}\fi
    \ifnum\r=2 \pgfmathtruncatemacro{\corr}{\k}\fi
    \pgfmathtruncatemacro{\vone}{\vonebase+\corr}

    \pgfmathtruncatemacro{\A}{ceil((\k+4)/5)}
    \pgfmathtruncatemacro{\B}{ceil((\k+3-\A)/3)}
    \pgfmathtruncatemacro{\vtwo}{6*(\n-2)*\A + 12*\B}

    \pgfmathtruncatemacro{\Athree}{ceil((\k+5)/5)}
    \pgfmathtruncatemacro{\Bthree}{ceil((\k+3-\Athree)/3)}
    \pgfmathtruncatemacro{\cn}{ceil(\n/2)}
    \pgfmathtruncatemacro{\vthree}{6*(\n-2)*\Athree + 12*\Bthree - 2*\cn}

    \def\pat{north east lines}
    \pgfmathtruncatemacro{\best}{\vone}

    \ifnum\vtwo<\best
      \def\pat{vertical lines}
      \pgfmathtruncatemacro{\best}{\vtwo}
    \fi

    \ifnum\vthree<\best
      \def\pat{crosshatch}
      \pgfmathtruncatemacro{\best}{\vthree}
    \fi

    \pgfmathsetmacro{\x}{\n-\nmin}
    \pgfmathsetmacro{\y}{\k-\kmin}

    \fill[pattern=\pat] (\x,\y) rectangle ++(1,1);
    \draw[line width=0.2pt] (\x,\y) rectangle ++(1,1);
  }

  \foreach \k in {14,...,25} {

    \pgfmathtruncatemacro{\c}{ceil(4*\n/3)}
    \pgfmathtruncatemacro{\r}{mod(\n,3)}
    \pgfmathtruncatemacro{\vonebase}{\c*(\k+1)}
    \pgfmathtruncatemacro{\corr}{0}
    \ifnum\r=0 \pgfmathtruncatemacro{\corr}{\k+1}\fi
    \ifnum\r=2 \pgfmathtruncatemacro{\corr}{\k}\fi
    \pgfmathtruncatemacro{\vone}{\vonebase+\corr}

    \pgfmathtruncatemacro{\A}{ceil((\k+4)/5)}
    \pgfmathtruncatemacro{\B}{ceil((\k+3-\A)/3)}
    \pgfmathtruncatemacro{\vtwo}{6*(\n-2)*\A + 12*\B}

    \pgfmathtruncatemacro{\Athree}{ceil((\k+5)/5)}
    \pgfmathtruncatemacro{\Bthree}{ceil((\k+3-\Athree)/3)}
    \pgfmathtruncatemacro{\cn}{ceil(\n/2)}
    \pgfmathtruncatemacro{\vthree}{6*(\n-2)*\Athree + 12*\Bthree - 2*\cn}

    \def\pat{north east lines}
    \pgfmathtruncatemacro{\best}{\vone}

    \ifnum\vtwo<\best
      \def\pat{vertical lines}
      \pgfmathtruncatemacro{\best}{\vtwo}
    \fi

    \ifnum\vthree<\best
      \def\pat{crosshatch}
      \pgfmathtruncatemacro{\best}{\vthree}
    \fi

    \pgfmathsetmacro{\x}{\n-\nmin}
    \pgfmathsetmacro{\y}{\k-\kmin-\gapshift+\gapsize}

    \fill[pattern=\pat] (\x,\y) rectangle ++(1,1);
    \draw[line width=0.2pt] (\x,\y) rectangle ++(1,1);
  }
}

\draw[->] (0,0) -- ({(\nmax-\nmin+1)+0.8},0) node[below] {$n$};

\pgfmathsetmacro{\ymax}{(\kmax-\kmin-\gapshift+\gapsize)+1.8}
\draw[->] (0,0) -- (0,\ymax) node[left] {$k$};

\foreach \n in {\nmin,...,\nmax} {
  \pgfmathsetmacro{\x}{(\n-\nmin)+0.5}
  \draw (\x,0) -- (\x,-0.18);
  \node[below] at (\x,-0.18) {\scriptsize \n};
}

\foreach \k in {1,2} {
  \pgfmathsetmacro{\yt}{(\k-\kmin)+0.5}
  \draw (0,\yt) -- (-0.18,\yt);
  \node[left] at (-0.18,\yt) {\scriptsize \k};
}

\foreach \k in {14,...,25} {
  \pgfmathsetmacro{\yt}{(\k-\kmin-\gapshift+\gapsize)+0.5}
  \draw (0,\yt) -- (-0.18,\yt);
  \node[left] at (-0.18,\yt) {\scriptsize \k};
}

\end{tikzpicture}

\vspace{2mm}

\begin{tikzpicture}[x=0.8cm,y=0.5cm]
\fill[pattern=north east lines] (0,0) rectangle (1,1);
\draw (0,0) rectangle (1,1);
\node[right] at (1.2,0.5) {Bound \eqref{dve}};

\fill[pattern=vertical lines] (6.0,0) rectangle (7.0,1);
\draw (6.0,0) rectangle (7.0,1);
\node[right] at (7.2,0.5) {Bound \eqref{U_2}};

\fill[pattern=crosshatch] (12.0,0) rectangle (13.0,1);
\draw (12.0,0) rectangle (13.0,1);
\node[right] at (13.2,0.5) {Bound \eqref{B_2}};
\end{tikzpicture}

\caption{Best bound among Bounds \eqref{dve}, \eqref{U_2}, and \eqref{B_2}
for $n=4,\dots,20$ and $k=1,2,\dots,25$ for $C_6\Box P_n$.}
\label{fig:best-bounds-C6Pn-k1-2-14-25-n4-20}
\end{figure}

\smallskip
\noindent
{Figure~\ref{fig:best-bounds-C6Pn-k1-2-14-25-n4-20} illustrates which of the three
upper bounds is minimal for each pair $(n,k)$ with
$4 \le n \le 20$ and $1 \le k \le 25$.
We observe that Bound~\eqref{dve} dominates in the vast majority of cases
throughout this parameter range.
In particular, for moderate and larger values of $n$,
the linear bound consistently provides the smallest upper bound.

Bound~\eqref{U_2} appears only sporadically and is restricted to a few isolated
parameter values with small $n$.
In contrast, Bound~\eqref{B_2} begins to improve upon
\eqref{dve} for larger values of $k$, primarily when $n$ is small.
As $k$ increases, the region where Bound~\eqref{B_2}
is optimal gradually expands.}
This behavior can be explained by comparing the asymptotic slopes of the bounds.
This is proved in the next result.

{\begin{theorem}
For all sufficiently large $n$, at least one of Bounds
\eqref{U_2} or \eqref{B_2} improves Bound \eqref{dve}.
More precisely:
\begin{itemize}
\item if $k\equiv 1\pmod 5$, then for $k\ge 31$ and all sufficiently large $n$,
      Bound \eqref{U_2} is the smallest among Bounds \eqref{dve}, \eqref{U_2}, and \eqref{B_2};
\item if $k\not\equiv 1\pmod 5$, then for $k\ge 53$ and all sufficiently large $n$,
      Bound \eqref{B_2} is the smallest among Bounds \eqref{dve}, \eqref{U_2}, and \eqref{B_2}.
\end{itemize}
\end{theorem}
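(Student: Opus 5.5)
We compare the three right-hand sides as affine functions of $n$ for fixed $k$, using only their slopes; the constant terms matter only for how large $n$ must be. Bound \eqref{dve} equals $\lceil 4n/3\rceil(k+1)$ plus a correction bounded by $k+1$, so it lies between $\frac{4(k+1)}{3}n$ and $\frac{4(k+1)}{3}n+2(k+1)$. Its asymptotic slope is therefore $s_1=\frac{4(k+1)}{3}$.

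We write $a=\lceil (k+4)/5\rceil$ and $a'=\lceil (k+5)/5\rceil$. Bound \eqref{U_2} is exactly $6a\,n+C$ with a constant $C$ depending only on $k$, so its slope is $s_2=6a$. Bound \eqref{B_2} differs from an affine function with slope $s_3=6a'-1$ by at most a bounded amount, since $2\lceil n/2\rceil\in\{n,n+1\}$. Whenever two slopes differ strictly, the bound with the smaller slope is the smaller one for all $n$ beyond some threshold $N(k)$. So it suffices to compare $s_1,s_2,s_3$ in each residue class of $k$ modulo $5$.

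\emph{Case $k\equiv 1\pmod 5$.} Here $a=(k+4)/5$ and $a'=a+1$, so $s_2=\frac{6(k+4)}{5}$ and $s_3=s_2+5$. Hence $s_2<s_3$ always. Next, $s_2<s_1$ is equivalent to $18(k+4)<20(k+1)$, that is $k>26$. Within this residue class the least such value is $k=31$, which matches the threshold in the statement.

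\emph{Case $k\not\equiv 1\pmod 5$.} Here $a'=a$, so $s_3=s_2-1<s_2$, and Bound \eqref{B_2} beats Bound \eqref{U_2} for large $n$. It remains to show $s_3<s_1$. We use $a'\le (k+5)/5+4/5=(k+9)/5$; the extreme case is $k\equiv 2\pmod 5$, where $a'=(k+8)/5$. Then $s_3\le \frac{6(k+8)}{5}-1$. We would check residue by residue that $\frac{6(k+8)}{5}-1<\frac{4(k+1)}{3}$, which after clearing denominators becomes $18k+129<20k+20$, i.e. $k>54.5$ in the worst residue. The other residues give smaller thresholds. We then verify that the claimed threshold $k\ge 53$ is covered by inspecting the finitely many $k$ between $53$ and $57$ directly with exact ceilings: for example $k=53\equiv 3$ gives $a'=12$, $s_3=71<72=s_1$, and $k=54$ gives $a'=12$, $s_3=71<73.3$.

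The main obstacle is this exact threshold bookkeeping in the second case. The crude bound $a'\le (k+9)/5$ is too lossy, so each residue class must be handled with its exact ceiling. If some $k\ge 53$, say $k=57\equiv 2$ (where $a'=13$, $s_3=77\le 77.3$), turns out borderline, it will have to be checked individually. Once every strict slope inequality is established, the dominance for $n\ge N(k)$ follows because the difference of two bounds is an affine function plus a bounded periodic term, with positive leading coefficient.
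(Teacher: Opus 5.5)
Your proposal is correct and uses the same slope-comparison argument as the paper. You compare $\tfrac{4}{3}(k+1)$, $6a$ and $6a'-1$, split on whether $a=a'$, and spell out the residue-by-residue threshold computations that the paper leaves implicit.

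Your closing worry is unnecessary. For $k\not\equiv 1 \pmod 5$ you have $a'\le (k+8)/5$, so $s_3<s_1$ holds for every such $k\ge 55$; this includes $k=57$, where $77<232/3$. Only $k=53,54$ need the direct check, and you already carried it out.
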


\noindent
\begin{proof}
Fix $k$ and compare the three bounds as functions of $n$.

\smallskip
\noindent
Since
\[
\left\lceil \frac{4n}{3} \right\rceil
= \frac{4}{3}n + O(1),
\]
Bound~\eqref{dve} has the form
\[
\frac{4}{3}(k+1)n + O_k(1).
\]

\smallskip
\noindent
Let $a=\left\lceil \frac{k+4}{5}\right\rceil$ and
$b=\left\lceil \frac{k+3-a}{3}\right\rceil$.
Then Bound \eqref{U_2} equals
\[ 6(n-2)a + 12b
= 6an + O_k(1).
\]

\smallskip
\noindent
Let $a'=\left\lceil \frac{k+5}{5}\right\rceil$ and
$b'=\left\lceil \frac{k+3-a'}{3}\right\rceil$.
Since $2\left\lceil \frac{n}{2}\right\rceil = n + O(1)$, Bound \eqref{B_2} equals
\[
6(n-2)a' + 12b' - 2\left\lceil \frac{n}{2}\right\rceil
= (6a'-1)n + O_k(1).
\]

\smallskip
\noindent
Therefore, for sufficiently large $n$, the smallest bound is determined
by comparing the linear coefficients
\[
\frac{4}{3}(k+1), \qquad 6a, \qquad 6a'-1.
\]

\smallskip
\noindent
Comparing $6a$ with $\frac{4}{3}(k+1)$ shows that
$6a < \frac{4}{3}(k+1)$ holds when
$k\equiv 1 \pmod 5$ and $k\ge 31$.
In the remaining residue classes,
$6a'-1 < \frac{4}{3}(k+1)$ holds once $k\ge 53$.

\smallskip
\noindent
Finally, if $k\equiv 1\pmod 5$, then $a<a'$ and hence
$6a < 6a'-1$, so Bound~\eqref{U_2} is asymptotically smaller than
Bound~\eqref{B_2}.
If $k\not\equiv 1\pmod 5$, then $a=a'$ and
$6a'-1 < 6a$, so Bound~\eqref{B_2} is asymptotically smaller than
Bound~\eqref{U_2}.
This completes the proof.
\end{proof} }

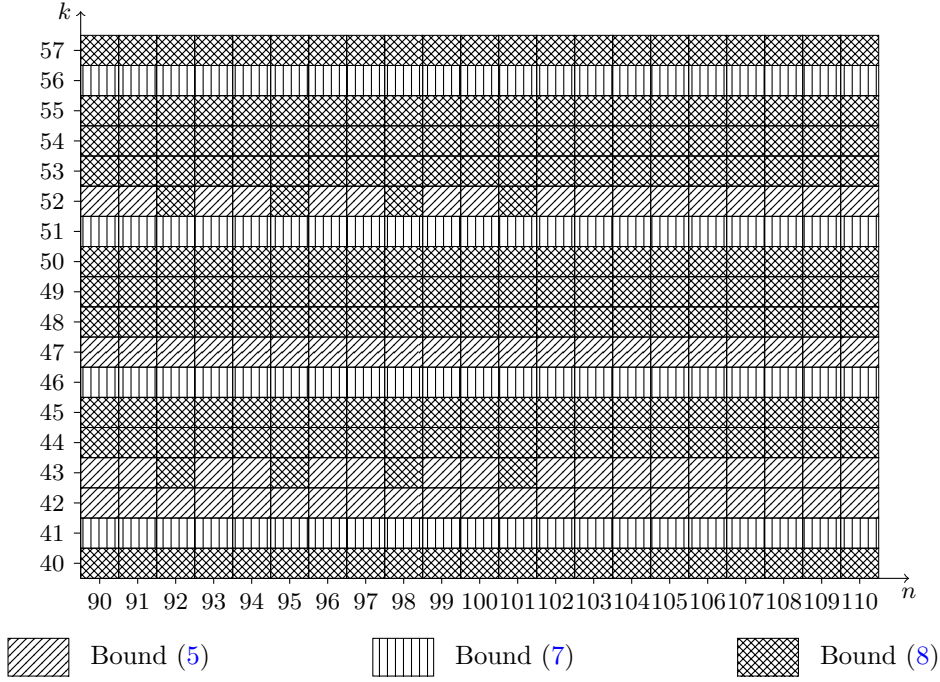
\begin{figure}[h!]
\centering
\begin{tikzpicture}[
  x=0.50cm,
  y=0.40cm,
  every node/.style={font=\small}
]
\def\nmin{90}
\def\nmax{110}
\def\kmin{40}
\def\kmax{57}

\foreach \n in {\nmin,...,\nmax} {
  \foreach \k in {\kmin,...,\kmax} {

    \pgfmathtruncatemacro{\c}{ceil(4*\n/3)}
    \pgfmathtruncatemacro{\r}{mod(\n,3)}
    \pgfmathtruncatemacro{\vonebase}{\c*(\k+1)}
    \pgfmathtruncatemacro{\corr}{0}
    \ifnum\r=0 \pgfmathtruncatemacro{\corr}{\k+1}\fi
    \ifnum\r=2 \pgfmathtruncatemacro{\corr}{\k}\fi
    \pgfmathtruncatemacro{\vone}{\vonebase+\corr}

    \pgfmathtruncatemacro{\A}{ceil((\k+4)/5)}
    \pgfmathtruncatemacro{\B}{ceil((\k+3-\A)/3)}
    \pgfmathtruncatemacro{\vtwo}{6*(\n-2)*\A + 12*\B}

    \pgfmathtruncatemacro{\Athree}{ceil((\k+5)/5)}
    \pgfmathtruncatemacro{\Bthree}{ceil((\k+3-\Athree)/3)}
    \pgfmathtruncatemacro{\cn}{ceil(\n/2)}
    \pgfmathtruncatemacro{\vthree}{6*(\n-2)*\Athree + 12*\Bthree - 2*\cn}

    \def\pat{north east lines} 
    \pgfmathtruncatemacro{\best}{\vone}

    \ifnum\vtwo<\best
      \def\pat{vertical lines} 
      \pgfmathtruncatemacro{\best}{\vtwo}
    \fi

    \ifnum\vthree<\best
      \def\pat{crosshatch} 
      \pgfmathtruncatemacro{\best}{\vthree}
    \fi

    \pgfmathsetmacro{\x}{\n-\nmin}
    \pgfmathsetmacro{\y}{\k-\kmin}

    \fill[pattern=\pat] (\x,\y) rectangle ++(1,1);
    \draw[line width=0.2pt] (\x,\y) rectangle ++(1,1);
  }
}

\draw[->] (0,0) -- ({(\nmax-\nmin+1)+0.8},0) node[below] {$n$};
\draw[->] (0,0) -- (0,{(\kmax-\kmin+1)+0.8}) node[left] {$k$};

\foreach \n in {\nmin,...,\nmax} {
  \pgfmathsetmacro{\x}{(\n-\nmin)+0.5}
  \draw (\x,0) -- (\x,-0.18);
  \node[below] at (\x,-0.18) {\scriptsize \n};
}

\foreach \k in {\kmin,...,\kmax} {
  \pgfmathsetmacro{\yt}{(\k-\kmin)+0.5}
  \draw (0,\yt) -- (-0.18,\yt);
  \node[left] at (-0.18,\yt) {\scriptsize \k};
}

\end{tikzpicture}

\vspace{2mm}

\begin{tikzpicture}[x=0.8cm,y=0.5cm]
\fill[pattern=north east lines] (0,0) rectangle (1,1);
\draw (0,0) rectangle (1,1);
\node[right] at (1.2,0.5) {Bound \eqref{dve}};

\fill[pattern=vertical lines] (6.0,0) rectangle (7.0,1);
\draw (6.0,0) rectangle (7.0,1);
\node[right] at (7.2,0.5) {Bound \eqref{U_2}};

\fill[pattern=crosshatch] (12.0,0) rectangle (13.0,1);
\draw (12.0,0) rectangle (13.0,1);
\node[right] at (13.2,0.5) {Bound \eqref{B_2}};
\end{tikzpicture}

\caption{Best bound among Bounds \eqref{dve}, \eqref{U_2}, and \eqref{B_2}
for $n=90,\dots,110$ and $k=40,\dots,57$ for $C_6\Box P_n$.}
\label{fig:best-bounds-C6Pn-k25-50-n4-20}
\end{figure}

\smallskip
\noindent
{The figure shows the transition point.
For $k=52$, Bound \eqref{dve} is still the smallest for $n \geq 102$, while at $k=53$
it is overtaken by Bound \eqref{B_2}.
Moreover, for every $k\equiv 1\pmod 5$, Bound \eqref{U_2} becomes optimal for large $k$.}


\subsection{Upper bounds for $m=7$}

\smallskip
\noindent
We now consider the case where the cycle length is congruent to $2$ modulo $5$.
We begin with an explicit construction for the base case $m=7$.

\begin{theorem}\label{thm:m7}
Let $n\ge 2$. Then
\begin{equation}
\gamma_{[k]R}(C_{7}\Box P_n) \le
\begin{cases}
(n+1)(k+1)+\dfrac{n-1}{2}(k+1)+2k, & n\ \text{odd},\\[8pt]
n(k+1)+\dfrac{n}{2}(k+1)+2k+1, & n\ \text{even}.
\end{cases}
\label{eq:gamma_kR_C7_Pn}
\end{equation}
\end{theorem}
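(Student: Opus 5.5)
The bound in \eqref{eq:gamma_kR_C7_Pn} will follow from an explicit labeling with values in $\{0,k,k+1\}$, as in Theorems~\ref{thm:m5} and~\ref{thm:m6}. The weight formula already suggests the structure. For odd $n$, the $n+1+\frac{n-1}{2}$ copies of $k+1$ fit a pattern in which the fibres alternate between carrying two labels $k+1$ and one label $k+1$: the $\frac{n+1}{2}$ fibres $F_0,F_2,\dots$ carry two each, and the $\frac{n-1}{2}$ fibres $F_1,F_3,\dots$ carry one each. The additional $2k$ (respectively $2k+1$ for even $n$) should come from local corrections on the boundary fibres, in the same way as for $m=5$ and $m=6$. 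So I would first write down an interior pattern of this density, given in matrix form like Pattern~\eqref{pat:m6-period}, and then specify the boundary fibres separately for $n$ odd and $n$ even.

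For the verification I will use two simple sufficient conditions, valid when all labels lie in $\{0,k,k+1\}$ and $k\ge 2$:
\begin{itemize}
\item a $0$-vertex with a neighbour of value $k+1$ satisfies $f(N[v])\ge k+|AN(v)|$ as long as it has at most two positive neighbours, since $2(k+1)\ge k+2$ and $k+1\ge k+1$;
\item a $0$-vertex with two neighbours of value $k$ also satisfies the condition, since $2k\ge k+2$.
\end{itemize}
Vertices labelled $k$ or $k+1$ impose no condition. Hence the whole verification reduces to a covering check on the $7\times n$ grid.

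The main obstacle is finding a correct interior pattern, because the naive period-two design fails. Suppose a two-label fibre carries $k+1$ at rows $a$ and $a+3$, covering six rows of its own fibre, and a one-label fibre carries $k+1$ at row $s$, covering rows $s-1,s,s+1$. Then the four remaining rows of the one-label fibre are four cyclically consecutive rows. They would have to equal $\{a,a+3,b,b+3\}$ for the two neighbouring two-label fibres, and this is impossible modulo~$7$. So I would let the positions drift from one period to the next, shifting the pair $(a,a+3)$ and the singleton $s$ by a fixed amount, as in the shifts used for $m=5$ and $m=6$. The resulting pattern should be periodic with a longer period, probably $14$ fibres. In it, each uncovered vertex of a one-label fibre is reached horizontally from one side, and the single row left uncovered in a two-label fibre is covered by the label of an adjacent one-label fibre in that row.

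I would first search by hand for such a shift. If no pure $\{0,k+1\}$ pattern of the exact density exists, I would allow a few labels $k+1$ to be replaced by pairs of $k$ arranged so that the second condition above applies. I would then recount to confirm that the weight per two fibres stays at $3(k+1)$.

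Finally, I would handle the ends. In $F_0$ and $F_{n-1}$ each vertex loses one horizontal neighbour, so I would add one $k$ and adjust one $k+1$ in each boundary fibre, as in the proof of Theorem~\ref{thm:m5}; this accounts for the $2k$. For even $n$ the last fibre falls on a one-label position, and one extra unit is needed, which gives $2k+1$. I would check both parities on small cases such as $n=2,3,4$, as in the examples following Theorem~\ref{thm:m5}.
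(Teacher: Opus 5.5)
Your framework matches the paper's: labels in $\{0,k,k+1\}$, two labels $k+1$ on $F_0,F_2,\dots$ and one on $F_1,F_3,\dots$, a drifting pattern of period $14$, and end corrections worth $2k$ (resp.\ $2k+1$). But for a constructive upper bound the construction \emph{is} the proof, and you never give it. Worse, the structure you aim for cannot exist. Your obstruction argument places the pair at rows $a,a+3$ and shows that the four consecutive rows left uncovered in a one-label fibre cannot equal $\{a,a+3,b,b+3\}$. There $a$ and $b$ (the pairs in the two neighbouring fibres) are arbitrary, so the obstruction involves only three consecutive fibres. Four consecutive rows of $C_7$ contain only one pair at cyclic distance $3$, so your argument rules out \emph{every} alternating pattern with distance-$3$ pairs, drifting or not. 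Letting the positions drift therefore cannot repair it. The picture you then describe, with a single uncovered row in each two-label fibre, is exactly the impossible configuration. Your fallback does not save the count either: replacing a label $k+1$ by two labels $k$ adds $k-1\ge 1$, so doing it in every period pushes the slope above $\frac32(k+1)$.

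The missing idea is to place the pair at cyclic distance $2$, in rows $r$ and $r+2$. Such a pair covers rows $r-1,\dots,r+3$ of its own fibre and leaves the consecutive rows $r+4,r+5$, one for the singleton of each neighbouring one-label fibre. The four consecutive uncovered rows $c,\dots,c+3$ of a one-label fibre then split as $\{c,c+2\}\cup\{c+1,c+3\}$. They are covered by the two neighbouring pairs, which must therefore differ by a shift of one row. This forces the drift and gives exactly Pattern~\eqref{pat:m2}: with indices mod $7$, $F_{2t}$ carries $k+1$ in rows $-t$ and $2-t$, and $F_{2t+1}$ carries $k+1$ in row $4-t$.

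The ends must then be fixed concretely, as the paper does:
\begin{itemize}
\item In $F_0$, row $5$ (which the missing left neighbour would have covered) gets label $k$.
\item For odd $n$, the last fibre is the usual two-label fibre plus a label $k$ in row $j-1$, where $f(j,n-2)=k+1$.
\item For even $n$, the last fibre gets labels $k+1$ in rows $j-3$ (the usual singleton) and $j$, where $f(j,n-2)=f(j+2,n-2)=k+1$. A label $k$ in row $j$ would leave $v=(j-1,n-1)$ with $f(N[v])=k<k+|AN(v)|$, which is why the even case costs $2k+1$.
\end{itemize}
Finally, your first sufficient condition holds for every $k\ge1$ and any number of positive neighbours. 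The paper's construction uses only that condition, so you need not exclude $k=1$.
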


\noindent
\begin{proof}
For the interior fibres we use a periodic pattern,
given by
\begin{equation}\label{pat:m2}
\begin{pmatrix}
k+1 & 0 & 0 & 0 & k+1 & 0 & \cdots \\
0 & 0 & k+1 & 0 & 0 & 0 & \cdots \\
k+1 & 0 & 0 & 0 & 0 & k+1 & \cdots \\
0 & 0 & 0 & k+1 & 0 & 0 & \cdots \\
0 & k+1 & 0 & 0 & 0 & 0 & \cdots \\
k & 0 & 0 & 0 & k+1 & 0 & \cdots \\
0 & 0 & k+1 & 0 & 0 & 0 & \cdots 
\end{pmatrix}.
\end{equation}

\smallskip
\noindent
More precisely, each odd column contains exactly two vertices of weight $k+1$,
which occur in rows $r$ and $r+2$ for some index $r$ taken modulo $7$. From one
odd column to the next odd column, these two positions move one row upward.
Each even column contains exactly one vertex of weight $k+1$, and this entry
also moves one row upward from one even column to the next, with all indices
taken modulo $7$.

\smallskip
\noindent
Since the shift is by one row on a $7$-cycle and the odd--even structure has
period $2$, the entire pattern repeats after $14$ columns along the
$P_n$-direction.

\smallskip
\noindent
It remains to define the labeling on the boundary fibre $F_{n-1}$.
Assume that $n$ is even. Let $j\in\{0,1,\dots,6\}$ be such that $f(j,n-2)=f(j+2,n-2)=k+1.$
Then we set
\[
f(j,n-1)=k+1
\qquad\text{and}\qquad
f(j-3,n-1)=k+1,
\]
where the index $j-3$ is taken modulo $7$, and all other vertices of $F_{n-1}$
are labeled $0$.

\smallskip
\noindent
Assume now that $n$ is odd. Let $j\in\{0,1,\dots,6\}$ be such that $f(j,n-2)=k+1.$
Then we set
\[
f(j-1,n-1)=k
\qquad\text{and}\qquad
f(j-3,n-1)=f(j+2,n-1)=k+1,
\]
where all indices are taken modulo $7$, and all other vertices of $F_{n-1}$ are
labeled $0$. This completes the construction and yields a $[k]$-RDF of the desired weight.
\end{proof}

\smallskip
\noindent
Next, we complement the linear construction from
Theorem~\ref{thm:m7} by two alternative ``uniform'' constructions.
As in the cases $m=5$ and $m=6$, we first assign a constant weight to all
vertices in each fibre and then exploit the slack in the neighborhood sums to
decrease the total weight on the vertices of a maximum packing.


\begin{theorem}\label{thm:C7Pn_uniform}
For $n\ge 4$,
\begin{equation}\label{eq:U_7}
\gamma_{[k]R}(C_7 \Box P_n)
\;\le\;
7 (n-2)\left\lceil\frac{k+4}{5}\right\rceil
\;+\;
14\left\lceil
\frac{k+3-\left\lceil\frac{k+4}{5}\right\rceil}{3}
\right\rceil.
\end{equation}
\end{theorem}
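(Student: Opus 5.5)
Following the proof of Theorem~\ref{C5Cn}, we take a fibrewise constant labeling. Put $a=\left\lceil\frac{k+4}{5}\right\rceil$ and $b=\left\lceil\frac{k+3-a}{3}\right\rceil$. Every vertex of an interior fibre $F_i$, $1\le i\le n-2$, gets the value $a$, and all seven vertices of each boundary fibre $F_0$ and $F_{n-1}$ get the value $b$. Summing gives exactly $7(n-2)a+14b$, which is the right-hand side of \eqref{eq:U_7}. What remains is to show that $f$ is a $[k]$-RDF.

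The verification splits by vertex type, since $m=7\ge 3$ means every vertex has two cycle-neighbours in its own fibre.
\begin{itemize}
\item An interior vertex whose fibre is not adjacent to a boundary fibre has five closed-neighbourhood vertices, each of value $a$. So $f(N[v])=5a\ge k+4=k+|AN(v)|$, because all four neighbours are positive.
\item An interior vertex in $F_1$ or $F_{n-2}$ has four neighbours of value $a$ and one of value $b$. Then $f(N[v])=4a+b$, and we need $4a+b\ge k+4$.
\item A boundary vertex has $|AN(v)|=3$, namely two cycle-neighbours of value $b$ and one neighbour of value $a$ in the adjacent interior fibre. Then $f(N[v])=3b+a\ge (k+3-a)+a=k+3=k+|AN(v)|$ by the choice of $b$.
\end{itemize}
Positivity of $a$ and $b$ is what fixes $|AN(v)|$ as stated. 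We have $a\ge1$ always. For $b\ge1$ we need $k+3>a$, which holds for every $k\ge1$.

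The hardest step is the inequality $4a+b\ge k+4$ for the fibres next to the boundary, which the proof of Theorem~\ref{C5Cn} passes over. I would first try to show $b\ge a$. Since $5a\le k+8$, we get $3a\le k+8-2a$, so $k+3-a\ge 3a-5+2a-a$. This yields $b\ge a$ as soon as $a\ge 5$ roughly; the small values of $k$ would then be checked directly. Once $b\ge a$ holds, $4a+b\ge 5a\ge k+4$.

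If $b\ge a$ fails for small $k$, we can still use $4a+b\ge 4a+1$ and check the finitely many residual cases by hand. For $k\le 1$ the $[k]$-Roman condition only applies to vertices with $f(v)<k$, and vertices carrying positive weight $\ge k$ need no check. This proves \eqref{eq:U_7}, and no packing argument is needed here.
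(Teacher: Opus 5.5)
Your proposal is correct and uses the same fibrewise-constant labeling as the paper, whose proof is omitted as analogous to Theorem~\ref{C5Cn}. Your explicit check of the fibres $F_1$ and $F_{n-2}$, where $f(N[v])=4a+b$, fills a step the paper passes over, and that step needs no case split: $b\ge (k+3-a)/3$ gives $4a+b\ge (11a+k+3)/3$, and $5a\ge k+4$ gives $11a\ge 2k+9$ for every $k\ge 1$, hence $4a+b\ge k+4$.
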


\noindent
\begin{proof}
The proof is analogous to that of Theorems~\ref{C5Cn} and~\ref{C} and is therefore omitted.
\end{proof}


\begin{lemma}\label{lem:packing-C7Pn}
For $n\ge 4$, the packing number of $C_7\Box P_n$ is
\begin{equation}\label{eq:rho-C7Pn}
\rho \left(C_7\Box P_n\right)= n+2.
\end{equation}
\end{lemma}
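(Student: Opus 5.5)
The plan is to prove the two inequalities $\rho(C_7\Box P_n)\ge n+2$ and $\rho(C_7\Box P_n)\le n+2$ separately, following the fibre-by-fibre approach used for Lemmas~\ref{lem:packing-C5Pn} and~\ref{lem:packing-C6Pn}. Throughout, let $S$ be a packing, $F_i$ the $i$-th fibre, and $s_i=|S\cap F_i|$.

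\emph{Local constraints.} Two packed vertices in the same fibre must be at cyclic distance at least $3$ on $C_7$. Three such vertices would need total cyclic gap at least $9>7$, so $s_i\le 2$. Two packed vertices in consecutive fibres $F_i$ and $F_{i+1}$ must have row indices differing by at least $2$. Two packed vertices in fibres $F_i$ and $F_{i+2}$ must lie in different rows.

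\emph{Lower bound.} I will exhibit an explicit packing in the spirit of Figures~\ref{fig:pattern-C5Pn} and~\ref{fig:pattern-C6Pn}.
\begin{itemize}
\item Put two vertices in each of the boundary fibres $F_0$ and $F_{n-1}$. These fibres have only one neighbouring fibre, so they can carry a pair more cheaply.
\item In the interior, place essentially one vertex per fibre, shifting the row by $3$ modulo $7$ from one fibre to the next. This satisfies both inter-fibre constraints.
\item Near each end, adjust the interior rows so that the boundary pair, for instance rows $r$ and $r+3$ in $F_0$, is compatible with the vertex in $F_1$ (which must avoid rows $r-1,\dots,r+4$) and with the vertex in $F_2$. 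This bookkeeping is a small finite case check.
\end{itemize}
The count is then $2+(n-2)+2=n+2$.

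\emph{Upper bound.} I will use a weighting (discharging) argument over windows of consecutive fibres. The key claim is that every interior window $F_{i-1},F_i,F_{i+1}$ with $s_i=2$ is forced to have $s_{i-1}+s_{i+1}$ small. The reason is that a pair in rows $r,r+3$ (or $r,r+4$) blocks, in each neighbouring fibre, all rows within distance $1$ of $r$ or of $r+3$. That leaves at most one admissible row, since $7-6=1$. I will then show that such configurations cannot repeat densely, so each $2$ must be ``paid for'' by a nearby $0$. With that established, I sum $s_i-1$ along the path and bound the surplus by the two unpaired boundary pairs.

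\emph{Main obstacle.} The hardest step is this compensation argument for the upper bound. I must rule out interior patterns such as $2,1,1,2,1,1,\dots$, in which the surplus grows linearly in $n$. I would first check all row configurations of length-three windows modulo the rotation of $C_7$, using the conditions $|r-r'|\ge 2$ for adjacent fibres and $r\ne r'$ for fibres at distance $2$. If these checks show that such patterns are admissible, then the upper bound $n+2$ cannot hold for large $n$, and the statement itself would need to be revisited, for example by restricting $n$ to a small range.
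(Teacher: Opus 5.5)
Your lower-bound half is fine, and it is the paper's construction. Put rows $\{r,r+3\}$ in $F_0$ and row $r+5$ in $F_1$, then shift by $3$ per fibre. In $F_{n-1}$ put the pair $\{j-2,j+2\}$, where $j$ is the row used in $F_{n-2}$. No end adjustment is actually needed.

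The gap is the upper bound, and it cannot be closed: $\rho(C_7\Box P_n)\le n+2$ is false for every $n\ge 9$. The pattern you were worried about, $2,1,1,2,1,1,\dots$, is in fact not admissible. By your own blocking observation, a pair $\{r,r+3\}$ leaves only row $r+5$ in each neighbouring fibre. Two vertices in row $r+5$ two fibres apart collide, so every interior fibre carrying two vertices has an empty neighbouring fibre. What defeats your discharging is that a single empty fibre can pay for two pairs. For $n=9$, list for each fibre the rows (indices in $C_7$) of the chosen vertices:
\[
\bigl(F_0,\dots,F_8\bigr)\colon\ \bigl(\{0,3\},\{5\},\{1\},\{3,6\},\emptyset,\{1,4\},\{6\},\{2\},\{0,4\}\bigr).
\]
Your three local conditions can be checked directly: cyclic distance $3$ within a fibre, at least $2$ between adjacent fibres, and distinct rows two fibres apart. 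So this is a packing of size $12>11=n+2$.

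More generally, take the fibre pattern $2,1^{a},2,0,2,1^{b},2$ with $a,b\ge 2$. Build each end segment as in your lower bound, and rotate the second so that the two pairs flanking the empty fibre are disjoint. This gives a packing of size $n+3$ for every $n\ge 9$. Repeating the block $2,1,1,2,0$, with the rotation advanced by one row per block, gives about $6n/5$.

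Conversely, suppose $t$ fibres carry two vertices and $z$ are empty. Then $|S|=n+t-z$. The empty-neighbour property, together with the fact that each empty fibre neighbours at most two pairs, gives $z\ge t_{\mathrm{int}}/2$. Hence $|S|\le n+t_{\mathrm{end}}+t_{\mathrm{int}}/2$. The pattern $2,1,2$ is impossible, because the single forces both pairs to be $\{s-2,s+2\}$. With these facts, a short case check shows that $\rho(C_7\Box P_n)=n+2$ holds exactly for $4\le n\le 8$.

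The paper's own upper-bound argument does not rescue the general claim. It assumes without justification that empty fibres can be avoided, which is precisely what the example above exploits. It asserts that at most one fibre contains two selected vertices, which its own extremal packing contradicts, since that packing has two such fibres. Its count $(n-1)+2$ also equals $n+1$, not $n+2$. So the conclusion is the one you anticipated: the statement has to be restricted to $4\le n\le 8$.
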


\noindent
\begin{proof}
In each fibre $F_i$ we have $|S\cap F_i|\le 2$, since two selected vertices
in the same fibre must be at cycle distance at least $3$ on $C_7$.
Moreover, two consecutive fibres cannot both contain two selected vertices.

\smallskip
\noindent
Note that choosing an empty fibre cannot increase the total size of a packing
beyond the bound $n+2$ that we aim to attain.
Hence, when proving optimality, we may assume that no fibre is left empty
unless this is forced by the packing condition.

\smallskip
\noindent
Under this assumption, if $|S\cap F_{i+1}|=1$, then
$|S\cap F_{i+2}|\le 2$.
Moreover, if two vertices were chosen in $F_{i+2}$, then fibre $F_{i+3}$ is left empty, a contradiction.

\smallskip
\noindent
We construct a packing of size $n+2$ as follows.

\smallskip
\noindent
First, choose two vertices in the first fibre (for example, $(0,1)$ and $(0,5)$).

\smallskip
\noindent
Next, choose one vertex in each fibre $F_1, \dots, F_{n-2}$, such that each selected vertex is shifted three positions downward along the cycle relative to the choice in the previous fibre.

\smallskip
\noindent
If the chosen vertex in $F_{n-2}$ lies in row $j$, then in the last fibre $F_{n-1}$ select the two vertices in rows $j+2$ and $j-2$ (with indices taken modulo $7$).

\smallskip
\noindent
These two vertices are at distance $4$ on $C_7$, and each is at distance $2$ from the vertex in $F_{n-2}$. Therefore, we obtain a packing of size $n+2$.

\smallskip
\noindent
For optimality, observe that at most one fibre can contain two selected vertices,
while every other fibre contains at most one.
Hence, $|S|\le (n-1)\cdot 1 + 2 = n+2$.
Since this bound is attained by the above construction (see
Figure~\ref{fig:pattern-C7Pn}), we conclude
$\rho(C_7\Box P_n)=n+2.$
\end{proof}

\begin{figure}[ht!]
\centering
\begin{tikzpicture}[scale=0.75]

\foreach \i in {1,...,7}{
  \foreach \r in {0,1,2,3,4,5,6}{
    \draw (\i,-\r) rectangle (\i+1,-\r-1);
  }
}

\foreach \i in {0,...,6}{
  \node at (\i+1.5,0.6) {\small $F_{\i}$};
}

\foreach \r in {0,1,2,3,4,5,6}{
  \node[anchor=east] at (0.9,-\r-0.5) {\small $\r$};
}

\node at (1.5,-1.5) {\Large $\ast$};
\node at (1.5,-5.5) {\Large $\ast$};

\node at (2.5,-3.5) {\Large $\ast$};

\node at (3.5,-6.5) {\Large $\ast$};

\node at (4.5,-2.5) {\Large $\ast$};

\node at (5.5,-5.5) {\Large $\ast$};

\node at (6.5,-1.5) {\Large $\ast$};

\node at (7.5,-4.5) {\Large $\ast$};

\node at (9.2,-3.5) {\Large $\cdots$};

\end{tikzpicture}
\caption{A periodic packing pattern in $C_7\Box P_n$ attaining $\rho(C_7\Box P_n)=n+2$:
one selected vertex in each fibre and two additional selected vertices on the boundary fibres.}
\label{fig:pattern-C7Pn}
\end{figure}
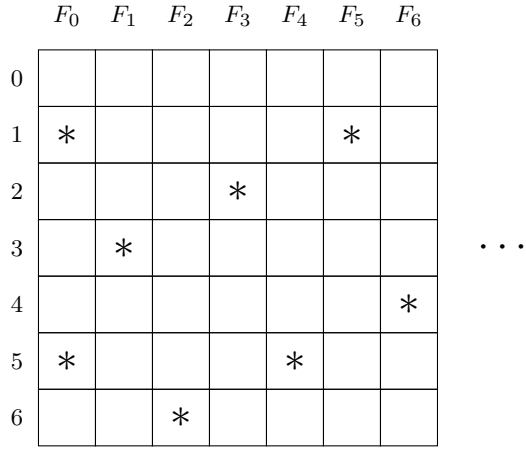


\begin{theorem}\label{thm:C7Pn_packing}
For $n\ge 4$,
\begin{equation}\label{eq:B_7}
\gamma_{[k]R}(C_7\Box P_n)
\ \le\
7 (n-2) \left\lceil\frac{k+5}{5}\right\rceil
\ +\
14\left\lceil
\frac{k+3-\left\lceil\frac{k+5}{5}\right\rceil}{3}
\right\rceil
\ -(n+2).
\end{equation}
\end{theorem}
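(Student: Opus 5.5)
The plan is to mirror the proofs of Theorems~\ref{C5Pn_2} and~\ref{C6Pn_2}: build a uniform labeling with one unit of slack in every closed neighbourhood, then subtract $1$ on the vertices of a maximum packing given by Lemma~\ref{lem:packing-C7Pn}.

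\emph{Step 1 (base labeling).} Set $a'=\lceil (k+5)/5\rceil$ and $b'=\lceil (k+3-a')/3\rceil$. On every interior fibre $F_1,\dots,F_{n-2}$ assign $a'$ to all seven vertices. On the boundary fibres $F_0$ and $F_{n-1}$ assign $b'$ to all seven vertices. The total weight is $7(n-2)a'+14b'$.

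\emph{Step 2 (slack).} Check the local inequalities, as for $m=5$.
\begin{itemize}
\item An interior vertex at distance at least $2$ from the ends has $f(N[v])=5a'\ge k+5$.
\item A vertex of $F_1$ or $F_{n-2}$ sees four vertices of weight $a'$ and one of weight $b'$.
\item A boundary vertex sees three vertices of weight $b'$ and one of weight $a'$, so $f(N[v])=3b'+a'\ge k+3$.
\end{itemize}
Note that once all labels are positive, $|AN(v)|=\deg(v)$. The requirement is then $k+4$ for interior vertices and $k+3$ for boundary vertices.

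\emph{Step 3 (reduction).} Take the packing $S$ of Lemma~\ref{lem:packing-C7Pn}, with $|S|=n+2$, and lower the label by one on each vertex of $S$. Since the closed neighbourhoods of vertices of $S$ are pairwise disjoint, each $N[v]$ loses at most $1$. Hence the interior condition survives because there is one unit of slack. This gives the bound $7(n-2)a'+14b'-(n+2)$.

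\emph{Main obstacle.} The delicate point is the boundary. The packing constructed in Lemma~\ref{lem:packing-C7Pn} has two vertices in each of $F_0$ and $F_{n-1}$. The bound $3b'+a'\ge k+3$ carries no guaranteed unit of slack, since $b'$ was chosen as the smallest value making this hold. The same gap exists in the $m=5,6$ proofs, which only asserted $f(N[v])\ge k+|AN(v)|+1$.

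To close it I will first try a small case split. In each boundary neighbourhood one vertex is an interior vertex of $F_1$ or $F_{n-2}$ carrying $a'$ rather than $b'$. One has $3b'+a'\ge k+4$ exactly when the ceiling in $b'$ is not tight. When it is tight, I will use that $a'\ge (k+5)/5$ gives $a'\ge a$, so the reduction lands on the vertex of $F_1$ or $F_{n-2}$. This means choosing the packing so that boundary neighbourhoods are hit only through such vertices.

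Failing that, I will also check that labels stay $\ge 0$ and never become $1$ in a harmful way. Labels of value $\ge 1$ need not be avoided, since Proposition~\ref{prop1} concerns only optimal functions. If the boundary slack genuinely fails for some $k$, the fallback is to raise $b'$ by one on the two boundary fibres. That would cost more than the statement allows, so I will aim to make the case analysis on $k \bmod 15$ succeed.
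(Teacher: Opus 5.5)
\textbf{The approach is the paper's, but it leaves a genuine gap.} Your Steps 1--3 are exactly the paper's argument: its proof is just ``analogous to Theorems~\ref{C5Pn_2} and~\ref{C6Pn_2}''. You are right that the slack claim $f(N[v])\ge k+|AN(v)|+1$ is unjustified on the boundary.

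\emph{The non-tight case is fine.} If $3\nmid (k-a')$, then $3b'+a'\ge k+4$. Since also $5a'\ge k+5$ and $4a'+b'\ge k+5$ for every $k\ge 1$, your argument with the explicit packing of Lemma~\ref{lem:packing-C7Pn} goes through. A label dropping to $0$ only lowers $|AN|$ of its neighbours, so it causes no harm.

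\emph{The tight case is not handled, and your repair cannot work.} The problem is $3\mid (k-a')$, i.e.\ $k\equiv 2,5,6,9,13 \pmod{15}$ (e.g.\ $k=2,5,6,9,13,17$). Here $3b'+a'=k+3$ exactly.
\begin{itemize}
\item Lowering $u\in F_1$ by one lowers $f(N[v])$ by one for its neighbour $v\in F_0$. Since $a'-1\ge 1$, the set $AN(v)$ is unchanged, and $f(v)=b'<k$, so $v$ violates the condition.
\item It is therefore irrelevant which vertex of $N[v]$ carries the reduction. The inequality $a'\ge a$ is also irrelevant, because $b'$ was already computed from $a'$.
\item Lowering $s\in F_0$ breaks $s$ itself: $f(N[s])=k+2$ while $|AN(s)|=3$. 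This holds even when $b'=1$.
\end{itemize}
So for tight $k$, no vertex of $F_0\cup F_1\cup F_{n-2}\cup F_{n-1}$ may be lowered.

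\textbf{Consequences.} For $n=4$ these four fibres are the whole graph. The fibre-uniform labelling then admits no reduction at all, and your method yields only $14a'+14b'$, not $14a'+14b'-6$. For $n\le 9$, a packing inside $F_2,\dots,F_{n-3}$ has at most $2(n-4)<n+2$ vertices. So no finer case split on $k \bmod 15$ rescues the route. What is missing is a genuinely non-uniform labelling for the tight residues, for example one that shifts weight between $F_0$ and $F_1$. Your fallback of raising $b'$ costs $14$, which is too much, as you noted.

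\textbf{The inequality itself is not refuted.} Take $k=17$, $n=4$. Here \eqref{eq:B_7} gives $134$, strictly below the $143$ of \eqref{eq:gamma_kR_C7_Pn} and the $140$ of \eqref{eq:U_7}. Label $F_0$ and $F_3$ cyclically by $6,5,5,6,5,5,6$ and label $F_1,F_2$ constantly by $4$. Then every boundary closed-neighbourhood sum is at least $20=k+3$, every inner one is at least $21=k+4$, and all labels are positive and below $k$. This is a $[17]$-RDF of weight $132$. Constructions of this kind would have to be supplied for every tight $k$ and small $n$. The paper's omitted proof has the same hole.
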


\noindent
\begin{proof}
Analogous to Theorems~\ref{C5Pn_2} and~\ref{C6Pn_2}; omitted.
\end{proof}


\smallskip
\noindent
We next compare the three bounds for $C_7\Box P_n$:
the linear construction from Theorem~\ref{thm:m7},
the uniform Bound \eqref{eq:U_7},
and the packing-improved Bound \eqref{eq:B_7}.
Figure~\ref{fig:best-bounds-C7Pn-k1-2-12-20-n4-20} shows, for each pair
$(k,n)$ with $4\le k\le 20$ and $4\le n\le 20$, which of these bounds is the
smallest.

\begin{figure}[h!]
\centering
\begin{tikzpicture}[
x=0.50cm,
y=0.33cm,
every node/.style={font=\small}
]

\def\nmin{4}
\def\nmax{20}
\def\kmin{1}
\def\kmax{20}

\def\gapshift{9}   
\def\gapsize{0.4}

\newcommand{\yy}[1]{%
  \ifnum#1<3
    \pgfmathsetmacro{\y}{#1-\kmin}
  \else
    \pgfmathsetmacro{\y}{#1-\kmin-\gapshift+\gapsize}
  \fi
}

\foreach \n in {\nmin,...,\nmax} {

  \foreach \k in {1,2,12,13,14,15,16,17,18,19,20} {

    \ifodd\n
      \pgfmathtruncatemacro{\vone}{((3*\n+1)/2)*(\k+1) + 2*\k}
    \else
      \pgfmathtruncatemacro{\vone}{(3*\n/2)*(\k+1) + 2*\k + 1}
    \fi

    \pgfmathtruncatemacro{\A}{ceil((\k+4)/5)}
    \pgfmathtruncatemacro{\B}{ceil((\k+3-\A)/3)}
    \pgfmathtruncatemacro{\vtwo}{7*(\n-2)*\A + 14*\B}

    \pgfmathtruncatemacro{\Athree}{ceil((\k+5)/5)}
    \pgfmathtruncatemacro{\Bthree}{ceil((\k+3-\Athree)/3)}
    \pgfmathtruncatemacro{\vthree}{7*(\n-2)*\Athree + 14*\Bthree - (\n+2)}

    \def\pat{north east lines}
    \pgfmathtruncatemacro{\best}{\vone}

    \ifnum\vtwo<\best
      \def\pat{vertical lines}
      \pgfmathtruncatemacro{\best}{\vtwo}
    \fi

    \ifnum\vthree<\best
      \def\pat{crosshatch}
      \pgfmathtruncatemacro{\best}{\vthree}
    \fi

    \pgfmathsetmacro{\x}{\n-\nmin}
    \yy{\k}

    \fill[pattern=\pat] (\x,\y) rectangle ++(1,1);
    \draw[line width=0.2pt] (\x,\y) rectangle ++(1,1);
  }
}

\draw[->] (0,0) -- ({(\nmax-\nmin+1)+0.8},0) node[below] {$n$};

\pgfmathsetmacro{\ymax}{(\kmax-\kmin-\gapshift+\gapsize)+1.8}
\draw[->] (0,0) -- (0,\ymax) node[left] {$k$};

\foreach \n in {\nmin,...,\nmax} {
  \pgfmathsetmacro{\x}{(\n-\nmin)+0.5}
  \draw (\x,0) -- (\x,-0.18);
  \node[below] at (\x,-0.18) {\scriptsize \n};
}

\foreach \k in {1,2} {
  \pgfmathsetmacro{\yt}{(\k-\kmin)+0.5}
  \draw (0,\yt) -- (-0.18,\yt);
  \node[left] at (-0.18,\yt) {\scriptsize \k};
}

\foreach \k in {12,13,14,15,16,17,18,19,20} {
  \pgfmathsetmacro{\yt}{(\k-\kmin-\gapshift+\gapsize)+0.5}
  \draw (0,\yt) -- (-0.18,\yt);
  \node[left] at (-0.18,\yt) {\scriptsize \k};
}

\end{tikzpicture}

\vspace{2mm}

\begin{tikzpicture}[x=0.8cm,y=0.5cm]
\fill[pattern=north east lines] (0,0) rectangle (1,1);
\draw (0,0) rectangle (1,1);
\node[right] at (1.2,0.5) {Bound \eqref{eq:gamma_kR_C7_Pn}};

\fill[pattern=vertical lines] (6,0) rectangle (7,1);
\draw (6,0) rectangle (7,1);
\node[right] at (7.2,0.5) {Bound \eqref{eq:U_7}};

\fill[pattern=crosshatch] (12,0) rectangle (13,1);
\draw (12,0) rectangle (13,1);
\node[right] at (13.2,0.5) {Bound \eqref{eq:B_7}};
\end{tikzpicture}

\caption{Best bound among \eqref{eq:gamma_kR_C7_Pn}, \eqref{eq:U_7}, and \eqref{eq:B_7}
for $n=4,\dots,20$ and $k=1,2,12,\dots,20$ for $C_7\Box P_n$.}
\label{fig:best-bounds-C7Pn-k1-2-12-20-n4-20}
\end{figure}

\smallskip
\noindent
{In this parameter range, Bound~\eqref{eq:gamma_kR_C7_Pn} dominates in the majority of cases.
However, a visible transition region appears for intermediate values of $k$.
In particular, for $k=16$ Bound~\eqref{eq:U_7} becomes optimal for small values of $n$, after which Bound~\eqref{eq:gamma_kR_C7_Pn} again yields the smallest value as $n$ increases. Bound~\eqref{eq:B_7} is becoming optimal for larger values of $k$ and small $n$.}

\smallskip
\noindent
We now determine a uniform threshold beyond which the third bound
asymptotically dominates the first one.
Since both expressions are linear in $n$ for fixed $k$,
their eventual comparison is governed by the coefficients of $n$.
The following theorem identifies the smallest universal value of $k$ for which Bounds \eqref{eq:U_7} and \eqref{eq:B_7} become asymptotically smaller than Bound \eqref{eq:gamma_kR_C7_Pn}.

\begin{theorem}
For all sufficiently large $k$ and $n$, at least one of Bounds
\eqref{eq:U_7} or \eqref{eq:B_7} improves Bound \eqref{eq:gamma_kR_C7_Pn}.
More precisely:
\begin{itemize}
\item if $k\equiv 1\pmod 5$, then for $k\ge 31$ and all sufficiently large $n$,
      Bound \eqref{eq:U_7} is the smallest among the three bounds;
\item if $k\not\equiv 1\pmod 5$, then for $k\ge 78$ and all sufficiently large $n$,
      Bound \eqref{eq:B_7} is the smallest among the three bounds.
\end{itemize}
\end{theorem}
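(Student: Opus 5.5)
The approach mirrors the $m=6$ comparison. For fixed $k$ all three bounds are affine in $n$ up to bounded parity corrections, so for large $n$ the comparison reduces to comparing the coefficients of $n$. I would write each bound in the form $\alpha n+O_k(1)$.

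\emph{Linear bound.} In both parity cases, Bound \eqref{eq:gamma_kR_C7_Pn} equals
\[
\tfrac{3}{2}(k+1)\,n+O_k(1).
\]

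\emph{Uniform bound.} With $a=\lceil (k+4)/5\rceil$ and $b=\lceil (k+3-a)/3\rceil$, Bound \eqref{eq:U_7} is
\[
7(n-2)a+14b=7a\,n+O_k(1).
\]

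\emph{Packing bound.} With $a'=\lceil (k+5)/5\rceil$ and $b'$ defined analogously, Bound \eqref{eq:B_7} is
\[
7(n-2)a'+14b'-(n+2)=(7a'-1)\,n+O_k(1).
\]

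A strict inequality between slopes then settles the order for all $n$ beyond an explicit $N(k)$. This $N(k)$ can be written down by solving the affine inequality, exactly as in the proof of Theorem~\ref{thm:bound1-eventually-optimal}.

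Next I would split by the residue of $k$ modulo $5$, as the paper does for $m=6$.

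\emph{Case $k\equiv 1\pmod 5$.} Here $a=(k+4)/5$ and $a'=a+1$. Hence $7a<7a'-1$, so Bound \eqref{eq:U_7} beats Bound \eqref{eq:B_7} asymptotically. It then remains to show
\[
\frac{7(k+4)}{5}<\frac{3(k+1)}{2},
\qquad\text{equivalently}\qquad
14k+56<15k+15 .
\]

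\emph{Case $k\not\equiv 1\pmod 5$.} Here $a=a'$, so $7a'-1<7a$. It remains to show $7a'-1<\tfrac32(k+1)$. I would treat each residue $r\in\{0,2,3,4\}$ separately, writing $a'=(k+5+s_r)/5$ with $s_r\in\{0,1,2,3\}$ chosen so that $k+5+s_r\equiv 0\pmod 5$. Each residue class then gives a linear threshold in $k$, and the overall threshold is the maximum over the four classes.

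The main obstacle is this threshold step, namely checking that the numerical constants $31$ and $78$ really suffice. I would first test the boundary values directly.

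\emph{Test at $k=31$.} We get $a=7$, so the slope of Bound \eqref{eq:U_7} is $49$, while $\tfrac32\cdot 32=48$. The slope inequality fails at $k=31$. The reduced inequality $14k+56<15k+15$ holds only for $k>41$. At $k=41$ the two slopes tie at $63$, and the constant terms $-14a+14b$ versus $O_k(1)$ would have to decide the comparison.

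\emph{Test in the other case.} The worst residue is $k\equiv 2\pmod 5$, where $s_r=3$. Here $7a'-1<\tfrac32(k+1)$ reduces to $k>87$. For example, $k=82$ gives slopes $125$ versus $124.5$, so the inequality fails there. By contrast, $k=78$ itself (residue $3$) gives $118<118.5$, so it satisfies the inequality.

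So the plan is to carry out these residue-by-residue computations carefully. If, as these checks indicate, the stated thresholds do not survive, the honest conclusion of the argument is the corrected thresholds: $k\ge 46$ when $k\equiv 1\pmod 5$, and $k\ge 88$ otherwise. The qualitative first sentence of the theorem, that for all sufficiently large $k$ and $n$ one of Bounds \eqref{eq:U_7} or \eqref{eq:B_7} improves Bound \eqref{eq:gamma_kR_C7_Pn}, holds regardless, since $7a,\,7a'-1\sim \tfrac75k<\tfrac32k$.
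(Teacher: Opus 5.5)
Your plan is the paper's own argument: compare the coefficients of $n$, namely $\alpha_1=\tfrac32(k+1)$, $\alpha_2=7\lceil (k+4)/5\rceil$ and $\alpha_3=7\lceil (k+5)/5\rceil-1$, one residue class of $k$ modulo $5$ at a time. Your arithmetic is correct, and it shows that the statement as printed is false, so it cannot be proved. The paper reaches the thresholds $31$ and $78$ only through two slips.

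For $k=5m+1$ the paper correctly reduces $\alpha_2<\alpha_1$ to $m>8$, but then rewrites $m\ge 9$ as $k\ge 31$ instead of $k\ge 46$. At $k=31$ and $k=36$ one has $\alpha_1=48<\alpha_2=49$ and $\alpha_1=55.5<\alpha_2=56$, with $\alpha_3$ larger still. So Bound~\eqref{eq:gamma_kR_C7_Pn} is eventually the smallest there.

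For $k\not\equiv 1\pmod 5$ the claimed ``straightforward check'' misses the residue $2$: at $k=82$ the slopes are $\alpha_1=124.5<\alpha_3=125<\alpha_2=126$. The same oversight invalidates the remark that $k=77$ is the last value at which Bound~\eqref{eq:gamma_kR_C7_Pn} dominates. At $k=82$, Bound~\eqref{eq:B_7} equals $125n+68$, while Bound~\eqref{eq:gamma_kR_C7_Pn} equals $124.5n+165$ for even $n$ and $124.5n+205.5$ for odd $n$. The linear bound therefore wins only for even $n>194$ or odd $n>275$, outside the window of Figure~\ref{fig:best-bounds-C7Pn-k70-82-n100-120}.

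Your corrected thresholds $k\ge 46$ and $k\ge 88$ are valid sufficient conditions. You can sharpen them by settling the two slope ties you left open; in both, the constant terms favour the improved bound.
\begin{itemize}
\item At $k=41$, Bound~\eqref{eq:U_7} equals $63n+42$, against $63n+83$ ($n$ even) or $63n+103$ ($n$ odd) for Bound~\eqref{eq:gamma_kR_C7_Pn}, while $\alpha_3=69$.
\item At $k=87$, Bound~\eqref{eq:B_7} equals $132n+68$, against $132n+175$ or $132n+218$, while $\alpha_2=133$.
\end{itemize}
Consequently, for $k\equiv 1\pmod 5$ the conclusion of the first bullet holds exactly when $k\ge 41$. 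For $k\not\equiv 1\pmod 5$ the conclusion of the second bullet holds for every $k\ge 83$, and $83$ is the least possible uniform threshold since $k=82$ fails. As you observe, the qualitative first sentence of the theorem is true.
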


\noindent
\begin{proof}
For fixed $k$, Bound \eqref{eq:gamma_kR_C7_Pn} is linear in $n$ with slope
\[
\alpha_1(k)=\frac{3}{2}(k+1).
\]
From Bound \eqref{eq:U_7} the slope equals
\[
\alpha_2(k)=7\left\lceil\frac{k+4}{5}\right\rceil,
\]
and from Bound \eqref{eq:B_7} the slope equals
\[
\alpha_3(k)=7\left\lceil\frac{k+5}{5}\right\rceil-1.
\]
For each fixed $k$ the inequality between any
two of them eventually (for all sufficiently large $n$) is determined by comparing their slopes.

\smallskip
\noindent
If $k=5m+1$, then
\[
\alpha_1(k)=\frac{3}{2}(k+1)=\frac{3}{2}(5m+2)=\frac{15}{2}m+3,
\qquad
\alpha_2(k)=7\left\lceil\frac{5m+5}{5}\right\rceil=7(m+1)=7m+7.
\]
Hence,
\[
\alpha_2(k)<\alpha_1(k)
\;\Longleftrightarrow\;
7m+7<\frac{15}{2}m+3
\;\Longleftrightarrow\;
m>8,
\]
so $\alpha_2(k)<\alpha_1(k)$ holds for all $m\ge 9$, i.e.\ for all $k\ge 31$ with
$k\equiv 1\pmod 5$.
Moreover, if $k\equiv 1\pmod 5$ then
\[
\left\lceil\frac{k+4}{5}\right\rceil < \left\lceil\frac{k+5}{5}\right\rceil,
\]
so $\alpha_2(k)<\alpha_3(k)$.
Therefore, for $k\equiv 1\pmod 5$ and $k\ge 31$, Bound \eqref{eq:U_7} has the smallest slope
among the three and is eventually the smallest bound.

\smallskip
\noindent
If $k=5m+r$ with $r\in\{0,2,3,4\}$, then
\[
\left\lceil\frac{k+5}{5}\right\rceil=
\begin{cases}
m+1,& r=0,\\
m+2,& r\in\{2,3,4\},
\end{cases}
\]
and hence,
\[
\alpha_3(k)=7\left\lceil\frac{k+5}{5}\right\rceil-1=
\begin{cases}
7m+6,& r=0,\\
7m+13,& r\in\{2,3,4\}.
\end{cases}
\]
Also
\[
\alpha_1(k)=\frac{3}{2}(k+1)=\frac{3}{2}(5m+r+1)=\frac{15}{2}m+\frac{3}{2}(r+1).
\]
The straightforward check shows that $\alpha_3(k)<\alpha_1(k)$ holds for all $k\ge 78$ with
$k\not\equiv 1\pmod 5$.
Thus Bound \eqref{eq:B_7} is eventually smaller than Bound \eqref{eq:gamma_kR_C7_Pn} in this range.
Finally, if $k\not\equiv 1\pmod 5$, then
\[
\left\lceil\frac{k+4}{5}\right\rceil=\left\lceil\frac{k+5}{5}\right\rceil,
\]
so
\[
\alpha_3(k)=\alpha_2(k)-1<\alpha_2(k),
\]
and therefore Bound \eqref{eq:B_7} has the smallest slope and is eventually the smallest bound.

\smallskip
\noindent
This completes the proof.
\end{proof}

\smallskip
\noindent
To illustrate the transition described in the previous theorem,
we compare the three bounds for $70 \le k \le 82$ and
$100 \le n \le 120$.

\begin{figure}[h!]
\centering
\begin{tikzpicture}[
x=0.40cm,
y=0.40cm,
every node/.style={font=\small}
]
\def\nmin{100}
\def\nmax{120}
\def\kmin{70}
\def\kmax{82}

\foreach \n in {\nmin,...,\nmax} {
  \foreach \k in {\kmin,...,\kmax} {

    \ifodd\n
      \pgfmathtruncatemacro{\vone}{((3*\n+1)/2)*(\k+1) + 2*\k}
    \else
      \pgfmathtruncatemacro{\vone}{(3*\n/2)*(\k+1) + 2*\k + 1}
    \fi

    \pgfmathtruncatemacro{\A}{ceil((\k+4)/5)}
    \pgfmathtruncatemacro{\B}{ceil((\k+3-\A)/3)}
    \pgfmathtruncatemacro{\vtwo}{7*(\n-2)*\A + 14*\B}

    \pgfmathtruncatemacro{\Athree}{ceil((\k+5)/5)}
    \pgfmathtruncatemacro{\Bthree}{ceil((\k+3-\Athree)/3)}
    \pgfmathtruncatemacro{\vthree}{7*(\n-2)*\Athree + 14*\Bthree - (\n+2)}

    \def\pat{north east lines}
    \pgfmathtruncatemacro{\best}{\vone}

    \ifnum\vtwo<\best
      \def\pat{vertical lines}
      \pgfmathtruncatemacro{\best}{\vtwo}
    \fi

    \ifnum\vthree<\best
      \def\pat{crosshatch}
      \pgfmathtruncatemacro{\best}{\vthree}
    \fi

    \pgfmathsetmacro{\x}{\n-\nmin}
    \pgfmathsetmacro{\y}{\k-\kmin}

    \fill[pattern=\pat] (\x,\y) rectangle ++(1,1);
    \draw[line width=0.2pt] (\x,\y) rectangle ++(1,1);
  }
}

\draw[->] (0,0) -- ({(\nmax-\nmin+1)+0.8},0) node[below] {$n$};
\draw[->] (0,0) -- (0,{(\kmax-\kmin+1)+0.8}) node[left] {$k$};

\foreach \n in {\nmin,...,\nmax} {
  \pgfmathsetmacro{\x}{(\n-\nmin)+0.5}
  \draw (\x,0) -- (\x,-0.18);
  \node[below] at (\x,-0.18) {\scriptsize \n};
}

\foreach \k in {\kmin,...,\kmax} {
  \pgfmathsetmacro{\y}{(\k-\kmin)+0.5}
  \draw (0,\y) -- (-0.18,\y);
  \node[left] at (-0.18,\y) {\scriptsize \k};
}
\end{tikzpicture}

\begin{tikzpicture}[x=0.8cm,y=0.5cm]
\fill[pattern=north east lines] (0,0) rectangle (1,1);
\draw (0,0) rectangle (1,1);
\node[right] at (1.2,0.5) {Bound \eqref{eq:gamma_kR_C7_Pn}};

\fill[pattern=vertical lines] (6.0,0) rectangle (7.0,1);
\draw (6.0,0) rectangle (7.0,1);
\node[right] at (7.2,0.5) {Bound \eqref{eq:U_7}};

\fill[pattern=crosshatch] (12.0,0) rectangle (13.0,1);
\draw (12.0,0) rectangle (13.0,1);
\node[right] at (13.2,0.5) {Bound \eqref{eq:B_7}};
\end{tikzpicture}

\caption{Best bound among \eqref{eq:gamma_kR_C7_Pn}, \eqref{eq:U_7}, and \eqref{eq:B_7} 
for $n=100,\dots,120$ and $k=70,\dots,82$ for $C_7\Box P_n$.}
\label{fig:best-bounds-C7Pn-k70-82-n100-120}
\end{figure}

\smallskip
\noindent
We observe that
$k=77$ is the last value of $k$ for which Bound
\eqref{eq:gamma_kR_C7_Pn} dominates.
For all $k\ge 78$, Bound \eqref{eq:gamma_kR_C7_Pn} is no longer asymptotically optimal, since either Bound \eqref{eq:U_7} or Bound \eqref{eq:B_7}
has a smaller slope.

\subsection{Upper bounds for $m=8$}

\smallskip
\noindent
In order to obtain an upper bound, we explicitly construct
a $[k]$-Roman dominating function on $C_8 \Box P_n$.
The construction yields the following result.

\begin{theorem}\label{thm:m8}
Let $n\ge 2$. Then
\begin{equation}\label{eq:m8-improved}
\gamma_{[k]R}(C_8\Box P_n)
\ \le\
2n(k+1)
-
\left(
\left\lfloor\frac{n-2}{5}\right\rfloor
+
\left\lfloor\frac{n}{5}\right\rfloor
\right)
\;+\;2k.
\end{equation}
\end{theorem}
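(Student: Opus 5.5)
We will prove the bound by an explicit construction, as in Theorems~\ref{thm:m5}, \ref{thm:m6} and~\ref{thm:m7}: a labeling $f$ with values in $\{0,k,k+1\}$ that is periodic along the $P_n$-direction, plus local corrections on the boundary fibres. The main term $2n(k+1)$ means ``two vertices of weight $k+1$ per fibre''. On $C_8$, two vertices at cycle distance $4$ in a fibre $F_i$ dominate only $6$ of the $8$ vertices of that fibre. The two leftover vertices must therefore be dominated from $F_{i-1}$ or $F_{i+1}$.

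\textbf{Basic pattern.} Place the two $(k+1)$-entries of fibre $F_i$ in rows $r_i$ and $r_i+4$. Shift the pair cyclically from fibre to fibre so that the undominated rows of $F_i$ are hit by the pairs in $F_{i\pm1}$. A shift by $2$ rows works: the pair $\{0,4\}$ leaves rows $\{2,6\}$ uncovered, and these are exactly the rows of the pair in the next fibre. This gives a valid $[k]$-RDF of weight $2n(k+1)$ on the interior, with many vertices dominated twice.

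\textbf{Savings.} The term $-\bigl(\lfloor\frac{n-2}{5}\rfloor+\lfloor\frac{n}{5}\rfloor\bigr)$ indicates that, in every block of five consecutive fibres, two of the $(k+1)$-entries can be lowered to $k$. These two entries are offset so that one block pattern starts at fibre $0$ and the other at fibre $2$, which explains the two floor terms.

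\textbf{Checking a lowered entry.} A vertex $v$ with $f(v)=k$ needs no condition, since the $[k]$-condition only constrains vertices with $f<k$. What must be checked is each of its $0$-neighbours $u$: it must still have $f(N[u])\ge k+|AN(u)|$. If $u$ has exactly one positive neighbour, that neighbour now has value $k$, and $k\ge k+1$ fails. So every $0$-neighbour of a lowered vertex must be dominated by at least two positive vertices, giving $f(N[u])\ge k+(k+1)\ge k+2$ for $k\ge1$. The plan is therefore to choose, within each period of $5$ fibres, the shifts $r_i$ so that the periodic pattern of Theorem~\ref{thm:m8} has two $(k+1)$-vertices whose entire open neighbourhoods are doubly dominated. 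Then set those two to $k$, one in fibre $5t+1$ and one in fibre $5t+3$ or similar, to match the floors. I will write the period as an $8\times5$ matrix in the style of \eqref{pat:m6-period} and verify each column.

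\textbf{Boundary.} The boundary fibres $F_0$ and $F_{n-1}$ have degree-$3$ vertices, which the interior pattern does not fully cover. As in Theorem~\ref{thm:m5}, I will add one extra vertex of weight $k$ on each boundary fibre (or lower an existing entry), giving the additive $2k$. For each residue of $n \bmod 5$, the truncation must not destroy the double domination near the last lowered entry. That is why the count uses floors rather than ceilings.

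\textbf{Main obstacle.} I expect the hardest step to be finding the explicit $5$-periodic shift sequence on $C_8$ that yields two doubly-dominated neighbourhoods per period while keeping every other vertex dominated. I would first search over shifts $r_{i+1}-r_i\in\{1,2,3\}$ by hand or by a small computer check. After that, a routine case check over $n \bmod 5$ completes the argument.
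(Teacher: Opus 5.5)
There is a genuine gap. Your accounting matches the paper's: two positive labels per fibre, two labels per block of five fibres lowered from $k+1$ to $k$, and one extra label $k$ on each boundary fibre. But the proof consists essentially of exhibiting the periodic block, and you defer exactly that step. Worse, the ansatz you commit to makes that step impossible. If every fibre $F_i$ carries its two positive labels at antipodal rows $\{c_i,c_i+4\}$, then no interior label can ever be lowered.

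To see this, note that row $c$ of $F_j$ is positive if and only if $c\equiv c_j \pmod 4$. Suppose $f(c_i,i)=k$.
\begin{itemize}
\item The vertex $(c_i+1,i)$ has label $0$, and its in-fibre neighbour $(c_i+2,i)$ also has label $0$. By your own criterion it therefore needs $c_{i-1}\equiv c_i+1$ or $c_{i+1}\equiv c_i+1 \pmod 4$.
\item Likewise $(c_i-1,i)$ needs $c_{i-1}\equiv c_i-1$ or $c_{i+1}\equiv c_i-1 \pmod 4$.
\item Hence $\{c_{i-1},c_{i+1}\}\equiv\{c_i+1,c_i-1\} \pmod 4$.
\item But then $(c_i+2,i)$ and all four of its neighbours carry label $0$, so $f(N[(c_i+2,i)])=0<k$.
\end{itemize}
So your shift-$2$ pattern is valid but has no slack, and the search over shifts $r_{i+1}-r_i\in\{1,2,3\}$ you propose will come up empty. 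The only places a label could be lowered are fibres next to the boundary, which saves $O(1)$ rather than about $2n/5$.

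The missing idea is to use pairs at cycle distance $3$. Such a pair also dominates six vertices of a fibre, but leaves two adjacent rows uncovered. The paper's period, applied to $F_1,\dots,F_{n-2}$, consists of five columns:
\begin{itemize}
\item $\{0,5\}$;
\item $\{2,7\}$;
\item $\{0,4\}$ with row $0$ lowered to $k$;
\item $\{1,6\}$;
\item $\{3,7\}$ with row $7$ lowered to $k$.
\end{itemize}
The boundary fibre $F_0$ carries $k+1$ at rows $0,3$ and $k$ at row $6$.

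Antipodal pairs occur only in the two columns containing a lowered entry. The flanking distance-$3$ pairs each do double duty: they cover one uncovered row of that fibre and also give the second domination of one in-fibre neighbour of the lowered vertex. For example, with $f(0,3)=k$:
\begin{itemize}
\item $(7,3)$ is also dominated by $(7,2)$;
\item $(1,3)$ is also dominated by $(1,4)$;
\item rows $2$ and $6$ of $F_3$ are covered by $(2,2)$ and $(6,4)$.
\end{itemize}

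With this block the lowered entries lie in fibres $\equiv 3$ and $\equiv 0 \pmod 5$ among $F_1,\dots,F_{n-2}$. This gives exactly $\lfloor n/5\rfloor+\lfloor (n-2)/5\rfloor$ savings. For each residue of $n \bmod 5$, one checks that the last fibre can be completed with two labels $k+1$ and one label $k$, which yields the additive $2k$.
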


\noindent
\begin{proof}
We define a $[k]$-Roman dominating function $f$ by a periodic pattern along the
$P_n$-direction. One period is given by the following $8\times 5$ block, and the block is
then repeated periodically, as indicated by the vertical bars:

\begin{equation}\label{eq:pat-m8-new}
\begin{pmatrix}
k+1    &| & k+1   & 0   & k & 0   & 0   &|& k+1 & \cdots\\
0  &|& 0   & 0   & 0   & k+1   & 0 & |& 0 & \cdots\\
0   &|  & 0   & k+1 & 0   & 0   & 0  &|& 0  & \cdots\\
k+1   &|  & 0   & 0   & 0   & 0 & k+1   &|& 0  & \cdots\\
0    &| & 0 & 0   & k+1   & 0   & 0 &|& 0 & \cdots\\
0   &|  & k+1   & 0   & 0 & 0   & 0 &|& k+1  & \cdots\\
k   &|  & 0   & 0   & 0   & k+1   & 0 &|& 0 & \cdots\\
0   &|  & 0   & k+1   & 0   & 0   & k   &|& 0  & \cdots
\end{pmatrix}.
\end{equation}

\smallskip
\noindent
The neighborhood verification is local and follows directly from the periodic
structure of Pattern~\eqref{eq:pat-m8-new}; thus $f$ is a valid $[k]$-Roman
dominating function on all interior fibres (except possibly on $F_1$ and
$F_{n-2}$, whose boundary vertices are properly supported by the adjustment on $F_0$ and $F_{n-1}$).
Note that independently of $n\pmod 5$, the last fibre can
always be finished by placing two vertices of weight $k+1$ and one vertex of
weight $k$, which guarantees the $[k]$-Roman condition on the boundary.
This contributes an additional weight of $4(k+1)+2k$ to the total.
We leave the straightforward verification to the reader.

\smallskip
\noindent
To compute the weight, note that in Pattern \eqref{eq:pat-m8-new}, some of these $k+1$ entries are lowered
to $k$. More precisely, along the interior fibres this replacement occurs on
every third and on every fifth fibre. Consequently,
the total number of such replacements equals
\[
S=
\left\lfloor\frac{n-2}{5}\right\rfloor
+
\left\lfloor\frac{n}{5}\right\rfloor.
\]

\smallskip
\noindent
Each replacement decreases the total weight by exactly $1$. Therefore the weight
of the interior part is
\[
(2(n-2)-S)(k+1)+Sk,
\]
since $2(n-2)-S$ vertices retain the value $k+1$ and $S$ vertices receive
the value $k$.

\smallskip
\noindent
Finally,
\[
(2(n-2)-S)(k+1)+Sk
=
2(n-2)(k+1)-S(k+1)+Sk
=
2(n-2)(k+1)-S.
\]

\smallskip
\noindent
Adding the boundary correction, which contributes $4(k+1)+2k$, we obtain
\[
w(f)
=
2(n-2)(k+1)
-
\left(
\left\lfloor\frac{n-2}{5}\right\rfloor
+
\left\lfloor\frac{n}{5}\right\rfloor
\right)
+4(k+1)+2k,
\]
which simplifies to
\[
2n(k+1)
-
\left(
\left\lfloor\frac{n-2}{5}\right\rfloor
+
\left\lfloor\frac{n}{5}\right\rfloor
\right)
+2k.
\]
\end{proof}

\smallskip
\noindent
Again, the next uniform bound is obtained.

\begin{theorem}\label{thm:C8Pn_uniform}
For $n\ge 4$,
\begin{equation}\label{eq:U_8}
\gamma_{[k]R}(C_8 \Box P_n)
\;\le\;
8 (n-2)\left\lceil\frac{k+4}{5}\right\rceil
\;+\;
16\left\lceil
\frac{k+3-\left\lceil\frac{k+4}{5}\right\rceil}{3}
\right\rceil.
\end{equation}
\end{theorem}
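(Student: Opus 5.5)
The construction is the same uniform labeling used in Theorems~\ref{C5Cn}, \ref{C} and \ref{thm:C7Pn_uniform}, now on $C_8$ fibres. Put
\[
a=\left\lceil\frac{k+4}{5}\right\rceil,\qquad b=\left\lceil\frac{k+3-a}{3}\right\rceil .
\]
Define $f(i,j)=a$ for every interior fibre $1\le j\le n-2$, and $f(i,0)=f(i,n-1)=b$ for all $i\in\{0,\dots,7\}$. The weight is then $8(n-2)a+16b$, which is the right-hand side of \eqref{eq:U_8}. It remains to check that $f$ is a $[k]$-RDF.

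\emph{Positivity.} Since $k\ge 1$ we have $a\ge 1$. Also $k+3-a\ge k+3-\frac{k+4}{5}-1>0$, so $b\ge 1$. Hence every vertex has positive label, and for every vertex $v$ we get $|AN(v)|=\deg(v)$. So $|AN(v)|=4$ on interior fibres and $|AN(v)|=3$ on boundary fibres. This is where $n\ge 4$ is used: every interior fibre has degree $4$ vertices, and every neighbour of an interior vertex that lies off the fibre is either interior or boundary. The condition to verify becomes $f(N[v])\ge k+\deg(v)$ whenever $f(v)<k$, and I will simply prove it for all $v$.

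\emph{Boundary vertex $v=(i,0)$.} Its closed neighbourhood has four vertices in $F_0$, each labelled $b$, and one vertex in $F_1$, labelled $a$. So $f(N[v])=3b+a+b\ge 3b+a$. From $b\ge \frac{k+3-a}{3}$ we get $3b+a\ge k+3$, as required. The fibre $F_{n-1}$ is symmetric.

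\emph{Interior vertex.} Here the step needing care is an interior vertex adjacent to a boundary fibre, that is, a vertex in $F_1$ or $F_{n-2}$. It has four neighbours labelled $a$ (including itself) and one labelled $b$, so I need $4a+b\ge k+4$. I would first try to show $b\ge a$, which gives $f(N[v])\ge 5a\ge k+4$. For large $k$ this holds, since $b\approx\frac{4k}{15}>a\approx\frac{k}{5}$. Small $k$ must be checked by hand. For example, $k=1$ gives $a=1$, $b=1$, and $k=2$ gives $a=2$, $b=1$. In the second case $4a+b=9\ge 6$, which still suffices. In general I would bound directly:
\[
4a+b\ge 4a+\frac{k+3-a}{3}=\frac{11a+k+3}{3}\ge \frac{11(k+4)/5+k+3}{3},
\]
and this is at least $k+4$ for all $k\ge 1$. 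Other interior vertices have all five closed-neighbourhood labels equal to $a$, so they trivially satisfy $5a\ge k+4$.

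\emph{Main obstacle.} Cycle length $8$ changes nothing in these local sums. So the main obstacle is only this boundary-adjacent inequality, together with confirming that positivity of all labels makes $|AN(v)|=\deg(v)$. Once both are checked, $f$ is a $[k]$-RDF and \eqref{eq:U_8} follows.
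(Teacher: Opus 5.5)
Your proof is correct and uses the same uniform labeling as the paper, whose proof just refers back to Theorem~\ref{C5Cn}. Your explicit check $4a+b\ge k+4$ for vertices of $F_1$ and $F_{n-2}$ is actually needed: the paper's claim $f(N[v])\ge 5a$ fails there when $b<a$ (e.g.\ $k=2$, where $a=2$ and $b=1$), so your argument is more careful than the paper's. One harmless slip is that a boundary vertex $(i,0)$ has only three closed-neighbourhood vertices in $F_0$, so $f(N[v])=3b+a$ exactly, which is the quantity you then use anyway.
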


\noindent
\begin{proof}
Analogous to Theorems~\ref{C5Cn} and~\ref{C}; omitted.
\end{proof}


\begin{lemma}\label{lem:packing-C8Pn}
The packing number of $C_8\Box P_n$ is
\[
\rho(C_8\Box P_n)= 2\left(n-\left\lfloor \frac{n}{3}\right\rfloor\right).
\]
\end{lemma}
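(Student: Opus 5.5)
The plan is to prove matching upper and lower bounds, writing $n=3q+r$ with $r\in\{0,1,2\}$, so that the claimed value is $2(n-q)=4q+2r$.

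\emph{Basic constraints.} Let $S$ be a packing. As in Lemma~\ref{lem:packing-C6Pn}, two vertices of $S$ in the same fibre must be at cycle distance at least $3$ on $C_8$. Hence $|S\cap F_i|\le 2$, and a pair in one fibre has rows $\{a,a+3\}$ or $\{a,a+4\}$, up to symmetry. Two vertices of $S$ in consecutive fibres $F_i,F_{i+1}$ must have cycle distance at least $2$. Two vertices in fibres $F_i,F_{i+2}$ must lie in different rows, since otherwise they share the neighbour in $F_{i+1}$.

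\emph{Construction (lower bound).} Use the period-$3$ pattern $F_0\supset\{0,4\}$, $F_1\supset\{2,6\}$, $F_2=\emptyset$, and repeat it. Here $F_3\cap S=\{0,4\}$ has distance $2+2=4$ from the pair in $F_1$ and distance $3$ from $F_0$. All pairwise distances are therefore at least $3$, so $S$ is a packing. This gives $4q$ vertices on $3q$ fibres. If $r=1$, I append a fibre carrying a pair after the last empty fibre. If $r=2$, I append two fibres carrying the pairs $\{0,4\}$ and $\{2,6\}$. The total is $4q+2r$.

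\emph{Upper bound.} This is the main step and the expected obstacle. I will show that any three consecutive fibres $F_i,F_{i+1},F_{i+2}$ contain at most $4$ vertices of $S$. Partitioning $F_0,\dots,F_{n-1}$ into $q$ triples plus $r$ single fibres (each containing at most $2$ vertices) then gives $|S|\le 4q+2r$.

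Suppose a triple contained $5$ vertices. Since each fibre holds at most $2$, the distribution would be $(2,2,1)$, $(1,2,2)$ or $(2,1,2)$.

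\emph{Case of two consecutive pairs.} I list the rows at cycle distance at least $2$ from both elements of a pair.
\begin{itemize}
\item For the pair $\{a,a+4\}$ these rows are $\{a+2,a+6\}$. So a pair in a neighbouring fibre must be exactly $\{a+2,a+6\}$.
\item For the pair $\{a,a+3\}$ these rows are $\{a+5,a+6\}$, which are at distance $1$. So such a pair has no neighbouring pair at all.
\end{itemize}
Two consecutive pairs are thus, up to rotation, $\{0,4\}$ and $\{2,6\}$. The third fibre is adjacent to one of these pairs, say the one with rows $\{2,6\}$. By the list above, its vertex must lie in rows $\{0,4\}$. But it is also at fibre-distance $2$ from the other pair, which occupies rows $0$ and $4$, and this is forbidden.

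\emph{Case $(2,1,2)$.} Let the middle vertex be in row $c$. Each outer pair must avoid the rows $c-1,c,c+1$, so it lies in the five rows $c+2,\dots,c+6$. The two outer pairs must also be row-disjoint, because they are at fibre-distance $2$. A pair inside the arc $c+2,\dots,c+6$ at distance at least $3$ must be one of
\[
\{c+2,c+5\},\qquad \{c+2,c+6\},\qquad \{c+3,c+6\}.
\]
Any two of these share a row, which is a contradiction.

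This completes the triple bound, and hence $\rho(C_8\Box P_n)=2\bigl(n-\lfloor n/3\rfloor\bigr)$.
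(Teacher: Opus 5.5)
\textbf{The gap is in your case $(2,1,2)$, and the lemma itself is false.} Your list $\{c+2,c+5\}$, $\{c+2,c+6\}$, $\{c+3,c+6\}$ is correct. However, the pairs $\{c+2,c+5\}$ and $\{c+3,c+6\}$ are disjoint, so the two outer pairs need not share a row, and the configuration is realizable.

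Write vertices as (row, fibre). Then $\{(2,0),(5,0),(0,1),(3,2),(6,2)\}$ is a packing of $C_8\Box P_3$:
\begin{itemize}
\item the two pairs each have cycle distance $3$;
\item their distances to $(0,1)$ are $3,4,4,3$;
\item vertices of $F_0$ and $F_2$ lie in different rows, so they are at distance at least $3$.
\end{itemize}
So three consecutive fibres can carry $5$ vertices, and $\rho(C_8\Box P_3)\ge 5$, whereas the formula gives $4$. The upper-bound half cannot be repaired by a better case analysis.

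The paper's own argument has the same defect. From ``among any three consecutive fibres at least one contains at most one vertex'' one only gets $5$ vertices per triple, not $4$.

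The counterexample propagates. Put rows $\{t+2,t+5\}$ in $F_{2t}$ and row $t$ in $F_{2t+1}$ (indices mod $8$). This is a packing of size $\lceil 3n/2\rceil$, which exceeds $2(n-\lfloor n/3\rfloor)$ for $n=3,6,7$ and for all $n\ge 9$.

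Conversely, your correct analysis of consecutive pairs shows that two consecutive fibres carrying two vertices each must be flanked by empty fibres. With $a_i=|S\cap F_i|$ and $n\ge 3$, this gives $\sum_{i=0}^{n-2}(a_i+a_{i+1}-3)\le 0$. The reason is that every term equal to $+1$ has a neighbouring term $\le -1$, and no term $\le -1$ is adjacent to two $+1$ terms. Hence $2|S|\le 3(n-1)+a_0+a_{n-1}$, so $|S|\le\lfloor (3n+1)/2\rfloor$. The true value is therefore $\rho(C_8\Box P_n)=\lceil 3n/2\rceil$ for $n\ge 3$.

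Your construction of size $2(n-\lfloor n/3\rfloor)$ is valid. It still supplies a packing of that size for the weight reduction in Theorem~\ref{thm:C8Pn_packing}, but the claimed equality does not hold.
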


\noindent
\begin{proof}
We first construct a packing.
In two consecutive fibres we select two vertices (at cycle distance $4$
in $C_8$), and the next fibre is left empty.
See Figure~\ref{fig:pattern-C8Pn}.

\smallskip
\noindent
For optimality, note that $|S\cap F_i|\le 2$ for every fibre,
and among any three consecutive fibres at least one must contain
at most one selected vertex, otherwise two vertices in
$F_i$ and $F_{i+2}$ would be at distance at most $2$.
Hence, each block of three fibres contributes at most four vertices,
which gives the same bound as above.
\end{proof}

\begin{figure}[ht!]
\centering
\begin{tikzpicture}[scale=0.75]

\foreach \i in {1,...,9}{
  \foreach \r in {0,1,2,3,4,5,6,7}{
    \draw (\i,-\r) rectangle (\i+1,-\r-1);
  }
}

\foreach \i in {0,...,8}{
  \node at (\i+1.5,0.6) {\small $F_{\i}$};
}

\foreach \r in {0,...,7}{
  \node[anchor=east] at (0.9,-\r-0.5) {\small $\r$};
}


\node at (1.5,-0.5) {\Large $\ast$};
\node at (1.5,-4.5) {\Large $\ast$};

\node at (2.5,-2.5) {\Large $\ast$};
\node at (2.5,-6.5) {\Large $\ast$};


\node at (4.5,-0.5) {\Large $\ast$};
\node at (4.5,-4.5) {\Large $\ast$};

\node at (5.5,-2.5) {\Large $\ast$};
\node at (5.5,-6.5) {\Large $\ast$};


\node at (7.5,-0.5) {\Large $\ast$};
\node at (7.5,-4.5) {\Large $\ast$};

\node at (8.5,-2.5) {\Large $\ast$};
\node at (8.5,-6.5) {\Large $\ast$};

\node at (11,-4) {\Large $\cdots$};

\end{tikzpicture}
\caption{A periodic packing pattern in $C_8\Box P_n$.}
\label{fig:pattern-C8Pn}
\end{figure}


\begin{theorem}\label{thm:C8Pn_packing}
For $n\ge 4$,
\begin{equation}\label{eq:B_8}
\gamma_{[k]R}(C_8\Box P_n)
\ \le\
8 (n-2) \left\lceil\frac{k+5}{5}\right\rceil
\ +\
16\left\lceil
\frac{k+3-\left\lceil\frac{k+5}{5}\right\rceil}{3}
\right\rceil
\ - 2\left(n-\left\lfloor \frac{n}{3}\right\rfloor\right).
\end{equation}
\end{theorem}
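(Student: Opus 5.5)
The proof will follow the same two-stage scheme as Theorems~\ref{C5Pn_2} and~\ref{C6Pn_2}: first a uniform labeling with built-in slack, then a weight reduction on a maximum packing. Set $a'=\left\lceil\frac{k+5}{5}\right\rceil$ and $b'=\left\lceil\frac{k+3-a'}{3}\right\rceil$.

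The labeling $f$ is defined as follows. Every vertex of an interior fibre $F_1,\dots,F_{n-2}$ gets value $a'$. Every vertex of the boundary fibres $F_0$ and $F_{n-1}$ gets value $b'$. The total weight is $8(n-2)a'+16b'$.

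I would then verify the slack inequality $f(N[v])\ge k+|AN(v)|+1$ for every vertex $v$ with $f(v)<k$; vertices with $f(v)\ge k$ impose no condition.
\begin{itemize}
\item An interior vertex $v$ not adjacent to a boundary fibre has $|AN(v)|=4$. Here $f(N[v])=5a'\ge k+5$, which is exactly the required $k+4+1$.
\item A boundary vertex $v\in F_0$ has three neighbours, so $|AN(v)|\le 3$. Its closed neighbourhood consists of three vertices of $F_0$ and one of $F_1$, so $f(N[v])=3b'+a'\ge k+3$. The needed value is $k+4$, so I must check $3b'+a'\ge k+4$ rather than $k+3$. I expect this to be the main obstacle, since the ceiling only guarantees $k+3$.
\item A vertex of $F_1$ has $f(N[v])=4a'+b'$, which should also be at least $k+5$.
\end{itemize}
For the boundary case I would first try to prove the strengthening directly by a residue check of $k$ modulo $15$. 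If that fails for some residues, I would choose the packing so that it avoids $N[F_0]$ and $N[F_{n-1}]$ wherever the slack is tight. Alternatively, I would argue as the paper does for $m=5,6$, where the same boundary slack is asserted.

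Next take a maximum packing $S$. By Lemma~\ref{lem:packing-C8Pn}, $|S|=2\left(n-\left\lfloor n/3\right\rfloor\right)$. Lower $f$ by one on each vertex of $S$; every vertex of $S$ has positive value, so this is possible. Because the closed neighbourhoods of vertices of $S$ are pairwise disjoint, each $N[v]$ contains at most one vertex of $S$. Hence each neighbourhood sum drops by at most $1$, and the slack of $1$ absorbs this.

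One must also check that $|AN(v)|$ does not increase and that no vertex value drops to $0$ where it matters. Lowering a value can only shrink $AN(v)$, since a vertex of value $1$ lowered to $0$ leaves $AN(v)$; this only helps the inequality. The resulting function is therefore a $[k]$-RDF of weight
\[
8(n-2)a'+16b'-2\left(n-\left\lfloor \frac{n}{3}\right\rfloor\right),
\]
which is exactly \eqref{eq:B_8}.
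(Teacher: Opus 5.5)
Your route is the one the paper intends: its proof is only ``analogous to Theorems~\ref{C5Pn_2} and~\ref{C6Pn_2}''. The step you flagged is a genuine gap, not a technicality. Since $3b'+a'=3\lceil (k+3-a')/3\rceil+a'$ with $a'=1+\lceil k/5\rceil$, the boundary slack $3b'+a'\ge k+4$ holds exactly when $3\nmid k+3-a'$. It fails precisely for $k\equiv 2,5,6,9,13\pmod{15}$, where $3b'+a'=k+3$. For example, $k=9$ gives $a'=b'=3$ and $f(N[v])=12=k+3$ for every $v\in F_0$.

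For the other residues your three inequalities hold at every vertex, and your proof is complete. They are needed also where $f(v)=k$, since lowering can make such a vertex constrained, and they do hold there.

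In the tight classes the argument breaks, as follows.
\begin{enumerate}
\item A maximum packing $S$ always meets $F_0\cup F_1$. To a packing inside $F_2\cup\dots\cup F_{n-1}$ you can always add a vertex of $F_0$ in a row not used in $F_2$.
\item For $s\in S\cap(F_0\cup F_1)$, let $v=s$ if $s\in F_0$, and otherwise let $v$ be the $F_0$-neighbour of $s$.
\item After lowering, $f(v)\le b'<k$. Also $|AN(v)|=3$ is unchanged, because the lowered vertex is $v$ itself or keeps the label $a'-1\ge 1$.
\item Hence $f(N[v])=k+2<k+|AN(v)|$, so the lowered function is not a $[k]$-RDF.
\end{enumerate}

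None of your fallbacks closes this.

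\emph{Avoiding the boundary.} In the tight case every vertex of $F_0\cup F_1$ lies in such an $N[v]$. Avoiding the tight boundary neighbourhoods therefore forces $S\subseteq F_2\cup\dots\cup F_{n-3}$. Then $|S|\le 2(n-4)<2(n-\lfloor n/3\rfloor)$ for $4\le n\le 11$, and $S=\emptyset$ for $n=4$. For these $n$ you only get a weaker bound. For large $n$ this fallback does work, but not via Lemma~\ref{lem:packing-C8Pn}, whose optimality part is false. Rows $\{-t,3-t\}$ of $F_{2t}$ and row $5-t$ of $F_{2t+1}$, taken mod $8$, form a packing of size $n+\lceil n/2\rceil$ (e.g.\ size $5$ in $C_8\Box P_3$). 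Placed on the middle $n-4$ fibres, it has at least $2(n-\lfloor n/3\rfloor)$ vertices once $n\ge 39$.

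\emph{Appealing to the paper.} The proof of Theorem~\ref{C5Pn_2}, to which this one is reduced, merely asserts $f(N[v])\ge k+|AN(v)|+1$ for every $v$. At a boundary vertex this is the same inequality $3b'+a'\ge k+4$, which is false in these classes. So the paper's omitted proof has the same hole.

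\emph{What is missing.} You need a different treatment of the end fibres. One option is a non-uniform labeling of $F_0\cup F_1$ (and of $F_{n-2}\cup F_{n-1}$) that costs no more than the uniform labeling minus the packing vertices there, yet satisfies the $[k]$-condition. For instance, for $k=9$, label rows $0,\dots,7$ of $F_0$ by $4,3,3,3,4,3,3,3$ and of $F_1$ by $2,2,3,2,2,2,3,2$, and keep $F_2$ at $3$. This satisfies the condition on $F_0\cup F_1\cup F_2$ whenever $F_3$ has labels in $\{2,3\}$, at cost $44=48-4$. An argument of this type is needed at both ends, for every tight $k$ and every $n$ not covered by a middle packing.
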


\noindent
\begin{proof}
Analogous to Theorems~\ref{C5Pn_2} and~\ref{C6Pn_2}; omitted.
\end{proof}

\smallskip
\noindent
To compare the upper bounds \eqref{eq:m8-improved},
\eqref{eq:U_8}, and \eqref{eq:B_8}, we evaluate them for all
pairs $(n,k)$ with $4 \le n \le 20$ and $1 \le k \le 25$.
For each pair, we select the smallest value among the three bounds.
Figure~\ref{fig:best-bounds-C8Pn-k1-2-8-25-n4-20} illustrates which bound
is optimal in each case.

\begin{figure}[h!]
\centering
\begin{tikzpicture}[
  x=0.50cm,
  y=0.33cm,
  every node/.style={font=\small}
]

\def\nmin{4}
\def\nmax{20}
\def\kmin{1}
\def\kmax{25}

\def\gapshift{5}   
\def\gapsize{0.4}  

\foreach \n in {\nmin,...,\nmax} {

  \foreach \k in {1,2} {

    \pgfmathtruncatemacro{\vone}{2*\n*(\k+1)
      - (floor((\n-2)/5) + floor(\n/5)) + 2*\k}

    \pgfmathtruncatemacro{\A}{ceil((\k+4)/5)}
    \pgfmathtruncatemacro{\B}{ceil((\k+3-\A)/3)}
    \pgfmathtruncatemacro{\vtwo}{8*(\n-2)*\A + 16*\B}

    \pgfmathtruncatemacro{\Athree}{ceil((\k+5)/5)}
    \pgfmathtruncatemacro{\Bthree}{ceil((\k+3-\Athree)/3)}
    \pgfmathtruncatemacro{\F}{floor(\n/3)}
    \pgfmathtruncatemacro{\vthree}{8*(\n-2)*\Athree + 16*\Bthree - 2*(\n-\F)}

    \def\pat{north east lines}
    \pgfmathtruncatemacro{\best}{\vone}

    \ifnum\vtwo<\best
      \def\pat{vertical lines}
      \pgfmathtruncatemacro{\best}{\vtwo}
    \fi
    \ifnum\vthree<\best
      \def\pat{crosshatch}
      \pgfmathtruncatemacro{\best}{\vthree}
    \fi

    \pgfmathsetmacro{\x}{\n-\nmin}
    \pgfmathsetmacro{\y}{\k-\kmin}

    \fill[pattern=\pat] (\x,\y) rectangle ++(1,1);
    \draw[line width=0.2pt] (\x,\y) rectangle ++(1,1);
  }

  \foreach \k in {8,...,25} {

    \pgfmathtruncatemacro{\vone}{2*\n*(\k+1)
      - (floor((\n-2)/5) + floor(\n/5)) + 2*\k}

    \pgfmathtruncatemacro{\A}{ceil((\k+4)/5)}
    \pgfmathtruncatemacro{\B}{ceil((\k+3-\A)/3)}
    \pgfmathtruncatemacro{\vtwo}{8*(\n-2)*\A + 16*\B}

    \pgfmathtruncatemacro{\Athree}{ceil((\k+5)/5)}
    \pgfmathtruncatemacro{\Bthree}{ceil((\k+3-\Athree)/3)}
    \pgfmathtruncatemacro{\F}{floor(\n/3)}
    \pgfmathtruncatemacro{\vthree}{8*(\n-2)*\Athree + 16*\Bthree - 2*(\n-\F)}

    \def\pat{north east lines}
    \pgfmathtruncatemacro{\best}{\vone}

    \ifnum\vtwo<\best
      \def\pat{vertical lines}
      \pgfmathtruncatemacro{\best}{\vtwo}
    \fi
    \ifnum\vthree<\best
      \def\pat{crosshatch}
      \pgfmathtruncatemacro{\best}{\vthree}
    \fi

    \pgfmathsetmacro{\x}{\n-\nmin}
    \pgfmathsetmacro{\y}{\k-\kmin-\gapshift+\gapsize}

    \fill[pattern=\pat] (\x,\y) rectangle ++(1,1);
    \draw[line width=0.2pt] (\x,\y) rectangle ++(1,1);
  }
}

\draw[->] (0,0) -- ({(\nmax-\nmin+1)+0.8},0) node[below] {$n$};

\pgfmathsetmacro{\ymax}{(\kmax-\kmin-\gapshift+\gapsize)+1.8}
\draw[->] (0,0) -- (0,\ymax) node[left] {$k$};

\foreach \n in {\nmin,...,\nmax} {
  \pgfmathsetmacro{\xt}{(\n-\nmin)+0.5}
  \draw (\xt,0) -- (\xt,-0.18);
  \node[below] at (\xt,-0.18) {\scriptsize \n};
}

\foreach \k in {1,2} {
  \pgfmathsetmacro{\yt}{(\k-\kmin)+0.5}
  \draw (0,\yt) -- (-0.18,\yt);
  \node[left] at (-0.18,\yt) {\scriptsize \k};
}

\foreach \k in {8,...,25} {
  \pgfmathsetmacro{\yt}{(\k-\kmin-\gapshift+\gapsize)+0.5}
  \draw (0,\yt) -- (-0.18,\yt);
  \node[left] at (-0.18,\yt) {\scriptsize \k};
}

\end{tikzpicture}

\vspace{2mm}

\begin{tikzpicture}[x=0.8cm,y=0.5cm]
\fill[pattern=north east lines] (0,0) rectangle (1,1);
\draw (0,0) rectangle (1,1);
\node[right] at (1.2,0.5) {Bound \eqref{eq:m8-improved}};

\fill[pattern=vertical lines] (6.0,0) rectangle (7.0,1);
\draw (6.0,0) rectangle (7.0,1);
\node[right] at (7.2,0.5) {Bound \eqref{eq:U_8}};

\fill[pattern=crosshatch] (12.0,0) rectangle (13.0,1);
\draw (12.0,0) rectangle (13.0,1);
\node[right] at (13.2,0.5) {Bound \eqref{eq:B_8}};
\end{tikzpicture}

\caption{Best bound among \eqref{eq:m8-improved}, \eqref{eq:U_8}, and \eqref{eq:B_8}
for $n=4,\dots,20$ and $k=1,2,8,\dots,25$ for $C_8\Box P_n$.}
\label{fig:best-bounds-C8Pn-k1-2-8-25-n4-20}
\end{figure}

\smallskip
\noindent
As in the previous comparison, Bound~\eqref{eq:m8-improved} dominates for smaller
parameter values, while for larger $k$, depending on the residue class modulo $5$,
Bounds~\eqref{eq:U_8} and \eqref{eq:B_8} provide the best estimate. Therefore, the next result is stated.

\begin{theorem}\label{thm:C8Pn-asymptotic-threshold}
For all sufficiently large $k$ and $n$, at least one of Bounds
\eqref{eq:U_8} or \eqref{eq:B_8} improves the linear Bound \eqref{eq:m8-improved}.
More precisely:
\begin{itemize}
\item if $k\equiv 1\pmod 5$, then for $k\ge 16$ and all sufficiently large $n$,
      Bound \eqref{eq:U_8} is the smallest among
      \eqref{eq:m8-improved}, \eqref{eq:U_8}, and \eqref{eq:B_8};
\item if $k\not\equiv 1\pmod 5$, then for $k\ge 18$ and all sufficiently large $n$,
      Bound \eqref{eq:B_8} is the smallest among
      \eqref{eq:m8-improved}, \eqref{eq:U_8}, and \eqref{eq:B_8}.
\end{itemize}
\end{theorem}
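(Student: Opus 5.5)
The plan is to argue asymptotically in $n$ for each fixed $k$, exactly as in the analogous results for $m=6,7$. All three bounds \eqref{eq:m8-improved}, \eqref{eq:U_8}, \eqref{eq:B_8} are of the form $\alpha n+O_k(1)$. So for fixed $k$ the eventual ordering as $n\to\infty$ is decided by the slopes, provided the slopes differ strictly. The first step is to extract the slopes. Since $\lfloor (n-2)/5\rfloor+\lfloor n/5\rfloor=\tfrac{2}{5}n+O(1)$, Bound \eqref{eq:m8-improved} has slope
\[
\alpha_1(k)=2(k+1)-\tfrac25 .
\]
Bound \eqref{eq:U_8} has slope $\alpha_2(k)=8a$ with $a=\lceil (k+4)/5\rceil$. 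Since $2(n-\lfloor n/3\rfloor)=\tfrac43 n+O(1)$, Bound \eqref{eq:B_8} has slope $\alpha_3(k)=8a'-\tfrac43$ with $a'=\lceil (k+5)/5\rceil$.

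The second step splits into residue classes, writing $k=5m+r$.

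For $r=1$ we have $a=m+1$ and $a'=m+2$. Then $\alpha_2=8m+8$, $\alpha_3=8m+16-\tfrac43>\alpha_2$, and $\alpha_1=10m+\tfrac{18}{5}$. The condition $\alpha_2<\alpha_1$ is equivalent to $2m>\tfrac{22}{5}$, i.e.\ $m\ge 3$, i.e.\ $k\ge 16$. This matches the first item.

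For $r\ne 1$ we have $a=a'$, so $\alpha_3=\alpha_2-\tfrac43<\alpha_2$, and Bound \eqref{eq:B_8} always beats Bound \eqref{eq:U_8}. It then remains to compare $\alpha_3$ with $\alpha_1=10m+2r+\tfrac85$:
\begin{itemize}
\item $r=0$: $a'=m+1$, so $\alpha_3=8m+\tfrac{20}{3}$. The inequality $\alpha_3<\alpha_1$ holds iff $m\ge 3$.
\item $r\in\{2,3,4\}$: $a'=m+2$, so $\alpha_3=8m+\tfrac{44}{3}$. The inequality $\alpha_3<\alpha_1$ is equivalent to $2m>\tfrac{196}{15}-2r$.
\end{itemize}
Within each residue class the condition is monotone in $m$. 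The theorem's threshold is then the largest of the per-class thresholds.

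The main obstacle is this last verification. My quick evaluation of the per-class thresholds gives $k\ge 15$ for $r=0$, $k\ge 19$ for $r=4$, $k\ge 23$ for $r=3$, and $k\ge 27$ for $r=2$. For instance, at $k=22$ one gets $\alpha_3=46\tfrac23>45.6=\alpha_1$. This does not reproduce the constant $18$ in the second item. I would therefore first recheck the slope of Bound \eqref{eq:m8-improved}, in particular the count $S$ of lowered entries, and the exact packing term in Bound \eqref{eq:B_8}. If those are as stated, the proof goes through verbatim with threshold $k\ge 27$ for $k\not\equiv 1\pmod 5$, and the stated value $18$ would have to be corrected to that.

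Finally, to pass from "smaller slope" to "smallest for all sufficiently large $n$", I would bound the $O_k(1)$ terms explicitly, using $\lfloor x\rfloor\ge x-1$ and $\lceil x\rceil\le x+1$. Then a strict slope inequality gives an explicit $N(k)$ beyond which the claimed bound is strictly minimal, as in Theorem~\ref{thm:bound1-eventually-optimal}.
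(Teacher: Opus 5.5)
You follow the same route as the paper. The paper's proof records exactly your slopes, $2(k+1)-\tfrac25$ for \eqref{eq:m8-improved} and a packing contribution of $\tfrac43$ for \eqref{eq:B_8}, and then says only that ``comparing these linear coefficients yields the claimed thresholds'', without doing the comparison. Your comparison is correct, and it shows that the second item is false as stated, already at $k=18$. There $a'=5$ and $b'=6$, so
\[
\eqref{eq:B_8}=38n+16+2\lfloor n/3\rfloor\ge \tfrac{116}{3}n+\tfrac{44}{3},
\qquad
\eqref{eq:m8-improved}=38n+36-S\le \tfrac{188}{5}n+38 .
\]
Hence the linear bound is strictly smaller for every $n\ge 22$, and the same happens at $k=22$, as you found. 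Nothing hidden in the formulas rescues $18$. In particular the packing term cannot: even a packing of the largest conceivable density $\tfrac85$ per fibre would give slope $40-\tfrac85=38.4>37.6$ at $k=18$. Incidentally, Lemma~\ref{lem:packing-C8Pn} does undercount. Rows $\{0,3\},\{5\},\{2,7\},\{4\},\{1,6\},\{3\}$ in $F_0,\dots,F_5$ form a packing of size $9>8$ in $C_8\Box P_6$. This does not affect a statement about the printed formulas. The first item, with $k\ge 16$, is correct, and your verification of it is complete.

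One correction to your own conclusion: the sharp universal constant is $23$, not $27$. Your per-class thresholds $15,27,23,19$ (for $r=0,2,3,4$) lie in different residue classes. So the smallest admissible $K$ in ``for all $k\ge K$ with $k\not\equiv 1\pmod 5$'' is one more than the largest failing value, which is $k=22$. Indeed $\alpha_3<\alpha_1$ holds for $k=23,24,25,27,28$. For $k\ge 29$ it follows directly from $a'\le (k+9)/5$, which gives $\alpha_1-\alpha_3\ge (6k-172)/15>0$. So the corrected second item reads $k\ge 23$; your $27$ is valid but not optimal. Your last step, controlling the $O_k(1)$ terms explicitly to get an $N(k)$, is fine.
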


\noindent
\begin{proof}
The proof is analogous to the corresponding slope comparison arguments
used previously. We compare the coefficients of $n$ in
\eqref{eq:m8-improved}, \eqref{eq:U_8}, and \eqref{eq:B_8}.

\smallskip
\noindent
For the linear Bound \eqref{eq:m8-improved}, we observe that
\[
\left\lfloor\frac{n-2}{5}\right\rfloor
+
\left\lfloor\frac{n}{5}\right\rfloor
=
\frac{2}{5}n+O(1),
\]
and hence its asymptotic slope equals
\[
2(k+1)-\frac{2}{5}.
\]
Moreover, in Bound \eqref{eq:B_8} the correction satisfies
\[
2\left(n-\left\lfloor\frac{n}{3}\right\rfloor\right)
=
\frac{4}{3}n+O(1),
\]
so the slope contribution of the subtraction term is $\frac{4}{3}$.

\smallskip
\noindent
Comparing these linear coefficients yields the claimed thresholds.
Therefore, the details are omitted.
\end{proof}


\section{Conclusion}

\smallskip
\noindent
In this paper, we investigated the $[k]$-Roman domination number of cylindrical graphs $C_m\Box P_n$.
We established a general lower bound
\[
\gamma_{[k]R}(C_m\Box P_n) > (k+1)\left\lceil\frac{mn}{5}\right\rceil,
\]
derived from local neighborhood constraints and the structure of extremal configurations.
Using the connection between $[k]$-Roman domination and efficient domination, we identified those cylindrical graphs in which optimal configurations attain the minimum feasible local weight.

\smallskip
\noindent
For small fixed circumferences $m\in\{5,\ldots,8\}$, we constructed explicit periodic labelings yielding linear upper bounds, and refined them via uniform ceiling-type constructions and packing-based reductions.
These results could naturally extend to multiples of the base circumferences by block repetition along the cycle.

\smallskip
\noindent
Our approach highlights a structural interplay between three key concepts: $[k]$-Roman domination, packing number, and efficient domination.
The lower bound is governed by local extremal constraints related to efficient domination, while improvements of the upper bounds are achieved through reductions on packing sets.
This unified perspective clarifies how local and global structural properties influence the behavior of $\gamma_{[k]R}$ on cylindrical grids.

\smallskip
\noindent
Further research may focus on determining exact values for additional values of $m$, analyzing the tightness of the obtained bounds, and extending these techniques to other Cartesian product graphs.

\smallskip
\noindent
{Inspecting the explicit periodic labelings constructed above, we observe that
the resulting upper bounds are linear in $n$ (up to an additive $O_k(1)$ boundary
term), and that their slopes are determined by the average density of positive
labels in the periodic patterns.

\smallskip
\noindent
More precisely, for $m=5,\ldots,8$ the leading terms are
\[
(k+1),\quad
\frac{4}{3}(k+1),\quad
\frac{3}{2}(k+1),\quad
2(k+1)-\frac{2}{5}.
\]

\smallskip
\noindent
On the other hand, the ceiling-type constructions yield slopes of the form
\[
m\left\lceil\frac{k+4}{5}\right\rceil
\quad\text{and}\quad
m\left\lceil\frac{k+5}{5}\right\rceil-\delta_m,
\]
where $\delta_m$ depends only on the packing density in $C_m\Box P_n$
and is independent of $k$.

\smallskip
\noindent
We see that the
resulting linear bounds are asymptotically linear in $n$, with slopes (for fixed
$k$) of the form $c_m(k+1)$ up to lower-order terms. For the cases $m=5,\dots,8$
treated in this paper, the leading coefficients are
\[
c_5=1,\qquad c_6=\frac{4}{3},\qquad c_7=\frac{3}{2},\qquad c_8=2.
\]
On the other hand, the ceiling-type constructions have asymptotic slope
$\frac{m}{5}k$ as $k\to\infty$. Comparing these slopes, we obtain
\[
\frac{m}{5}<c_m \quad\text{for } m=6,7,8.
\]

\smallskip
\noindent
Thus, in all cases considered here except $m=5$, the ceiling-type constructions
are asymptotically more efficient in the $n$-direction for sufficiently large
$k$. It would be interesting to determine whether the strict inequality
$\frac{m}{5}<c_m$ persists for all larger values of $m$, that is, whether for
every fixed $m\ge 6$ and sufficiently large $k$ the best ceiling-type (or
packing-improved) constructions eventually dominate the linear ones. Establishing
such a general comparison remains an open problem.}

\backmatter
\bigskip

\noindent
\small{\textbf{Funding} The first author (S.B.) acknowledges the financial support from the Slovenian Research Agency (ARIS)
through research programme No.\ P1-0297.
The second author (J.Z.) was partially supported by ARIS through the annual work program of Rudolfovo
and by the research grants P2-0248, L1-60136.
Also supported in part by Horizon Europe project Quantum Excellence Centre for Quantum-Enhanced Applications, QEC4QEA.

\section*{Declarations}

\noindent
\small{\textbf{Conflict of Interest} The authors declare no conflicts of interest.}

%
%

\bibliography{sn-bibliography}

\end{document}